\title{Graph reductions, binary rank, and pivots in gene assembly}
\author{Nathan Pflueger \footnote{\textit{E-mail address:} pflueger@math.harvard.edu}\\
Harvard University}
\newtheorem{theorem}{Theorem}[section] % numbered like the section
\newtheorem{lemma}[theorem]{Lemma} % numbered like the theorems
\newtheorem{proposition}[theorem]{Proposition}
\newtheorem{corollary}[theorem]{Corollary}
\newtheorem{definition}[theorem]{Definition}
\theoremstyle{remark}
\newtheorem{example}[theorem]{Example}
\newtheorem{problem}{Problem}
\newcommand{\rk}[1]{\textrm{rank}(#1)}
\newcommand{\rkg}[1]{\textrm{rank}_G(#1)}
\newcommand{\mat}[4]{ \begin{bmatrix} #1 & #2 \\ #3 & #4 \end{bmatrix} }
\newcommand{\smat}[4]{\bigl[ \begin{smallmatrix} #1 & #2 \\ #3 & #4 \end{smallmatrix} \bigr]}
\newcommand{\col}[2]{\left[ \begin{smallmatrix} #1 \\ #2 \end{smallmatrix} \right] }
\newcommand{\row}[2]{ \left[ \begin{smallmatrix} #1 & #2 \end{smallmatrix} \right]}
\newcommand{\gpr}{\textrm{gpr}}
\newcommand{\gdr}{\textrm{gdr}}
\newcommand{\gnr}{\textrm{gnr}}
\newcommand{\cE}{\mathcal{E}}
\newcommand{\cV}{\mathcal{V}}
\newcommand{\cW}{\mathcal{W}}
\newcommand{\cR}{\mathcal{R}}
\newcommand{\cP}{\mathcal{P}}
\newcommand{\perpE}{{\perp \mathcal{E}}}
\begin{document}

\maketitle

\begin{abstract}
We describe a graph reduction operation, generalizing three graph reduction operations related to gene assembly in ciliates. The graph formalization of gene assembly considers three reduction rules, called the positive rule, double rule, and negative rule, each of which removes one or two vertices from a graph. The graph reductions we define consist precisely of all compositions of these rules. We study graph reductions in terms of the adjacency matrix of a graph over the finite field $\textbf{F}_2$, and show that they are path invariant, in the sense that the result of a sequence of graph reductions depends only on the vertices removed. The binary rank of a graph is the rank of its adjacency matrix over $\textbf{F}_2$. We show that the binary rank of a graph determines how many times the negative rule is applied in any sequence of positive, double, and negative rules reducing the graph to the empty graph, resolving two open problems posed by Harju, Li, and Petre. We also demonstrate the close relation between graph reductions and the matrix pivot operation, both of which can be studied in terms of the poset of subsets of vertices of a graph that can be removed by a graph reduction.
\end{abstract}

\begin{center}
\textit{Keywords:} gene assembly; graph reductions; binary rank; path invariance; pivots
\end{center}

%\tableofcontents

\section{Introduction}

This paper considers a graph reduction process formalizing gene assembly in strichotrichous ciliates. We briefly survey this background before describing the combinatorial formalization. The biological background is not necessary elsewhere in the paper.

\subsection{Ciliates and Gene Assembly}

Strichotrichous ciliates are ancient unicellular eukaryotes possessing two distinct types of cell nuclei, called the macronucleus and the microncleus. The macronucleus is the somatic nucleus, while the micronucleus is a germline nucleus that is used to transmit genes to offspring during sexual reproduction. Genes in the micronucleus are located on long molecules consisting of coding blocks separated by non-coding material. These coding blocks must be assembled into their ``orthodox order'' during reproduction. In the micronucleus, however, the blocks may be shuffled, and some may be inverted. The necessary data to assemble these blocks into the orthodox order are encoded in short nucleotide sequences called pointers, located at each end of each coding block. In effect, the coding blocks may be regarded as nodes in a doubly linked list, with the pointers at the ends of each block indicating which block precedes it and which block follows it in the orthodox order. The process of reading these pointers and assembling the blocks into the orthodox order is called the gene assembly process, and it is an example of what could be considered computation in living cells. Background on strichotrichous ciliates and the gene assembly process may be found in \cite{jahn} and \cite{prescott}. A thorough treatment of the gene assembly process and its various formalizations can be found in \cite{monograph}.

The formalization we consider comes from an intramolecular model for gene assembly, which is described in \cite{ehrenfeucht3} and \cite{prescott1}. Several mathematical formalizations for the intramolecular model are described in \cite{langille}. The most straightforward formalization makes use of signed double-occurence strings: the sequence of pointers is described by a string in which each letter occurs exactly twice, and each letter is given a sign to indicate whether or not it is inverted. Such strings are also called \textit{legal strings}. This formalization is studied further in \cite{ehrenfeucht} and \cite{harju0}. The formalization which we consider uses signed graphs, which can be obtained from legal strings as follows: the vertex set is the set of letters in the string; two vertices are connected by an edge if the corresponding letters ``interlock'' in the string (i.e. they appear in the patter $abab$, rather than $aabb$ or $abba$); a vertex is assigned the sign $+$ if it appears both inverted and non-inverted in the string, and $-$ otherwise.  Although it may appear that some information is lost in converting strings to graphs, it is demonstrated in \cite{ehrenfeucht0} that no essential information is lost, in the sense that assembly strategies in one formalization correspond to assembly strategies in the other. It is not the case, however, that all signed graphs arise from legal strings. Further discussion of the legal string and signed graph formalizations, and the relation between them, can be found in \cite{ehrenfeucht2}.

The intramolecular model postulates that gene assembly is achieved by applying a sequence of three basic molecular operations, denoted $LD, HI$, and $DLAD$. These correspond, in the legal string and signed graph formalizations, to combinatorial operations called the \textit{negative rule}, \textit{positive rule} and \textit{double rule}. In the graph formalization, each rule shrinks the vertex set of the graph by one or two vertices, and reconfigures the edges between the remaining vertices. The gene assembly process is complete when the graph has been reduced to the empty graph. These three combinatorial operations are the basis of the graph reductions which we study in this paper.

\subsection{Graph Reductions}

We shall refer to the graph formalization of the three molecular operations as \textit{combinatorial reduction rules}, and compositions of them will be called \textit{combinatorial graph reductions}, or simply \textit{graph reductions}. A \textit{successful} graph reduction is a reduction of a graph to the empty graph.

The basic problems about the graph formalization of gene assembly concern understanding the different sequences of combinatorial reductions rules in a graph that produce a successful reduction. In particular, one wishes to understand how to measure the complexity of a given signed graph from the standpoint of combinatorial graph reduction. Several measures of complexity are proposed and analyzed in \cite{harju1}. In particular, one can ask if a given graph can be reduced to the empty graph using only some subset of the three operations; a classification is given in \cite{harju} for those graphs which can be reduced without the positive rule, and those which can be reduced without the double rule; this paper completes that classification by classifying the graphs which can be reduced without the negative rule (Section \ref{nullitySection}). An active topic recently concerns the \textit{parallel complexity} of a signed graph. The parallel complexity of a signed graph is the number of steps needed to reduce it to the empty graph, if it is permitted to perform a set of operations simultaneously if and only if they could be applied in any order with the same result. Parellel complexity is studied for various families of graphs in \cite{harju} and \cite{harju2}, and the computational problem of determining a graph's parallel complexity is considered in \cite{computation} and \cite{alhazov}. It is not known whether parallel complexity can be computed in polynomial time. Surprisingly, no nontrivial bounds are known for the parallel complexity of a graph with a given number of vertices. It is not even known whether parallel comlexity is unbounded for general graphs; it is conjectured in \cite{harju} that in fact parallel complexity is bounded by a constant for all graphs.

This paper demonstrates that all of these questions can be formulated in linear algebraic terms by considering the adjacency matrix of the graph over $\textbf{F}_2$, where the sign of a vertex is encoded by regarding positive vertices as having loops. This idea has also been pursued in \cite{brijder}. We generalize a result from \cite{harju} by demonstrating that the combinatorial reduction rules on signed graphs are path-invariant, in the sense that the result of removing a given subset of vertices does not depend on the particular operations used to remove them (Theorem \ref{pathInvariance}); we also provide an algebraic criterion determining whether a given set of vertices can be removed (Proposition \ref{matrixReducibility}). We prove that the number of times the negative rule is applied in a reduction to the empty graph is determined by the rank of the adjacency matrix (Theorem \ref{ngrNullity}), thus classifying the graphs which can be reduced without this rule and resolving two problems posed in \cite{harju}.

The methods in this paper also suggest another way to encode the data of the possible reductions of a signed graph: by a poset. In particular, the set of subsets of vertices which can be removed without the negative rule (equivalently, as we demonstrate, the subsets whose induced subgraph has invertible adjacency matrix) forms a poset that we call the \textit{pivotal poset}. This poset completely determines the original graph (Theorem \ref{posetDeterminesGraph}), and naturally encodes the sequences of steps that apply in parallel, thus suggesting a new approach to the study of parallel complexity.

The notion of a graph pivot, first considered in the context of gene assembly in \cite{brijder}, is closely related to the methods of this paper. In particular, we characterize the pivot operation in terms of the pivotal poset. As an application of these ideas, we describe and solve in Section \ref{reverseReductions} what might be called the inverse problem for reductions of signed graphs: given a graph, which graphs can be reduced to it using combinatorial reduction rules?

Some of our results, in particular regarding path invariance of graph reductions, have been proved in weaker forms in \cite{brijder}, by means of the matrix pivot operation on the adjacency matrix. In effect, their work concerns what we refer to as \textit{nonsingular reduction}, which can be characterized as those graph reductions which do not use the negative rule. In Section \ref{pivots}, we show this connection, and give a simple characterization of pivots of graphs in terms of the reducibility poset. Our method generalizes some results from \cite{brijder}, and provides short proofs for others. We discuss the implications of the pivot operation for the reducibility poset. As a special case, we consider the \textit{retrograph} of a graph, which can be defined by taking the inverse of the adjacency matrix, when it exists.

We begin by describing the combinatorial reduction rules in Section \ref{combinatorial}. We generalize the reduction rules in \ref{algebraic} using linear algebra over $\textbf{F}_2$, and prove our path-invariance result. We demonstrate in Section \ref{combMinimal} that the combinatorial reduction rules are simply the minimal graph reductions, and also give our results on the number of applications of the negative rule in a successful reduction. In Section \ref{pivots} we relate graph reductions to the matrix pivot operation, and describe the relation between pivots, the pivotal poset, and the graph reduction inverse problem.

Throughout the paper, we shall use the word \textit{graph} to refer to a simple graph with loops (i.e. there is at most one edge between any two vertices, and vertices may have edges to themselves), and the \textit{adjacency matrix} of a graph will always be understood to have coefficients in $\textbf{F}_2$. When we refer to a \textit{signed graph}, we mean a simple graph without loops together with an assigned sign ($+$ or $-$) for each vertex. These two notions are equivalent in the sense that the sign $+$ may be understood to indicate that  the vertex has a loop edge.

\section{Combinatorial Graph Reductions}\label{combinatorial}
We begin by describing the graph reduction operations formalizing the three molecular operations. We shall refer to these reductions as \emph{combinatorial graph reductions}, in order to distinguish them from the definition of graph reductions that we give in the next section. In Section \ref{combMinimal} we shall demonstrate that these two notions of graph reduction coincide. These reductions have been considered on signed graphs until now, so we present this viewpoint first. We then describe how these rules can be equivalently formulated on simple graphs with loops (which we shall call, simply, \textit{graphs}), and demonstrate that this leads to simple formulas for the reduction rules in terms of the adjacency matrix.

\subsection{On signed graphs}
The three molecular operations postulated by the intramolecular model are HI, DLAD, and LD. Each has a corresponding rule on signed graphs, defined as follows.

\begin{definition}
A \emph{signed graph} $G = (V,E,\sigma)$ is a simple graph on vertices $V = \{v_1, v_2, \dots, v_n\}$, with edges $E$, such that each vertex is given a sign by $\sigma: V \rightarrow \{+,-\}$.
\end{definition}

Let $N_G(v)$ denote the neighborhood of $v$ in $G$ (not including $v$ itself). By \textit{complementing} an edge $(v_1,v_2)$, where $v_1,v_2$ are vertices, we mean adding an edge between $v_1$ and $v_2$ if one is not present, and removing the edge between $v_1$ and $v_2$ if one is present.

\begin{definition}
The three combinatorial reduction rules on signed graphs are as follows.
\begin{itemize}
\item $\emph{gpr}_v$, the \emph{graph positive rule} applies if and only if $\sigma(v)=+$. It removes $v$ from the graph, all edges among two vertices in $N_G(v)$ are complemented, and the signs of all vertices in $N_G(v)$ are inverted.
\item $\emph{gdr}_{v_1,v_2}$, the \emph{graph double rule} applies if and only if $\sigma(v_1)=\sigma(v_2)=-$ and $(v_1,v_2) \in E$. It removes $v_1$ and $v_2$ from the graph, and complements all edges $(x,y)$ such that one of $x$ or $y$ lies in $N_G(v_1)$ and the other in $N_G(v_2)$, but such that not both $v_1$ and $v_2$ lie in $N_G(v_1) \cap N_G(v_2)$. Signs are unaffected.
\item $\emph{gnr}_v$, the \emph{graph negative rule} applies if and only if $\sigma(v)=-$ and $v$ is isolated (has no neighbors). It removes $v$, and does not affect the rest of the graph.
\end{itemize}
A \emph{combinatorial reduction strategy} is a sequence $(\gamma_1, \gamma_2, \dots, \gamma_n)$ of one or more combinatorial reduction rules. A combinatorial reduction strategy is called \emph{applicable} if for $i =1,2,\dots,n$, the rule $\gamma_i$ applies to $\gamma_{i-1} \circ \dots \circ \gamma_2 \circ \gamma_1 (G)$. The \emph{domain} of a reduction strategy is the set of vertices removed by the strategy. A reduction strategy is called \emph{successful} if it is applicable and its domain is all of $V$. The composition $\gamma_n \circ \dots \circ \gamma_2 \circ \gamma_1$ of the rules of a combinatorial reduction strategy is called a \emph{combinatorial reduction}.
\end{definition}

See \cite{ehrenfeucht3}, \cite{prescott1}, \cite{ehrenfeucht0} and the monograph \cite{monograph}, for discussion of these three rules and their relation to the postulated molecular operations HI, DLAD, and LD. We also point out that if we consider graphs with only negative vertices, the double rule $\gdr$ is identical to the \textit{rank two reduction} rule as considered in \cite{ggt}.

An example of a successful reduction strategy of a signed graph, demonstrating the three rules, is shown in Figure 1. There are other successful reduction strategies for this graph (for example, vertex $v_3$ can be removed first, using the positive rule, see Figure \ref{fig:pathInvarianceExample}). All diagrams have been created using the gene assembly simulator \cite{simulator}.

\begin{figure}[h]
\begin{center}
\noindent\begin{tabular}{c|c}
$G$& $\gdr_{v_1,v_2}(G)$\\ &\\
\includegraphics[scale=0.3]{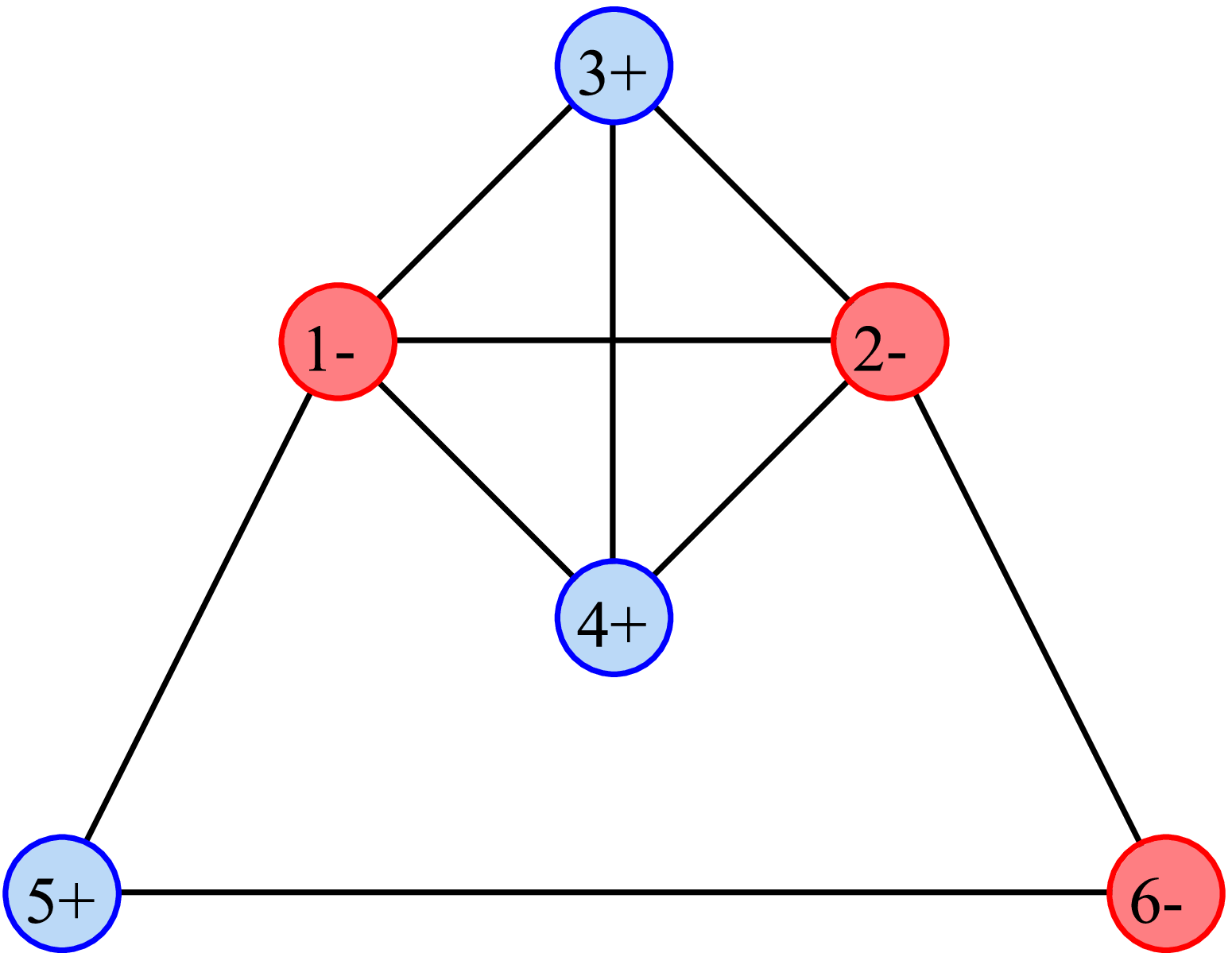}&
\includegraphics[scale=0.3]{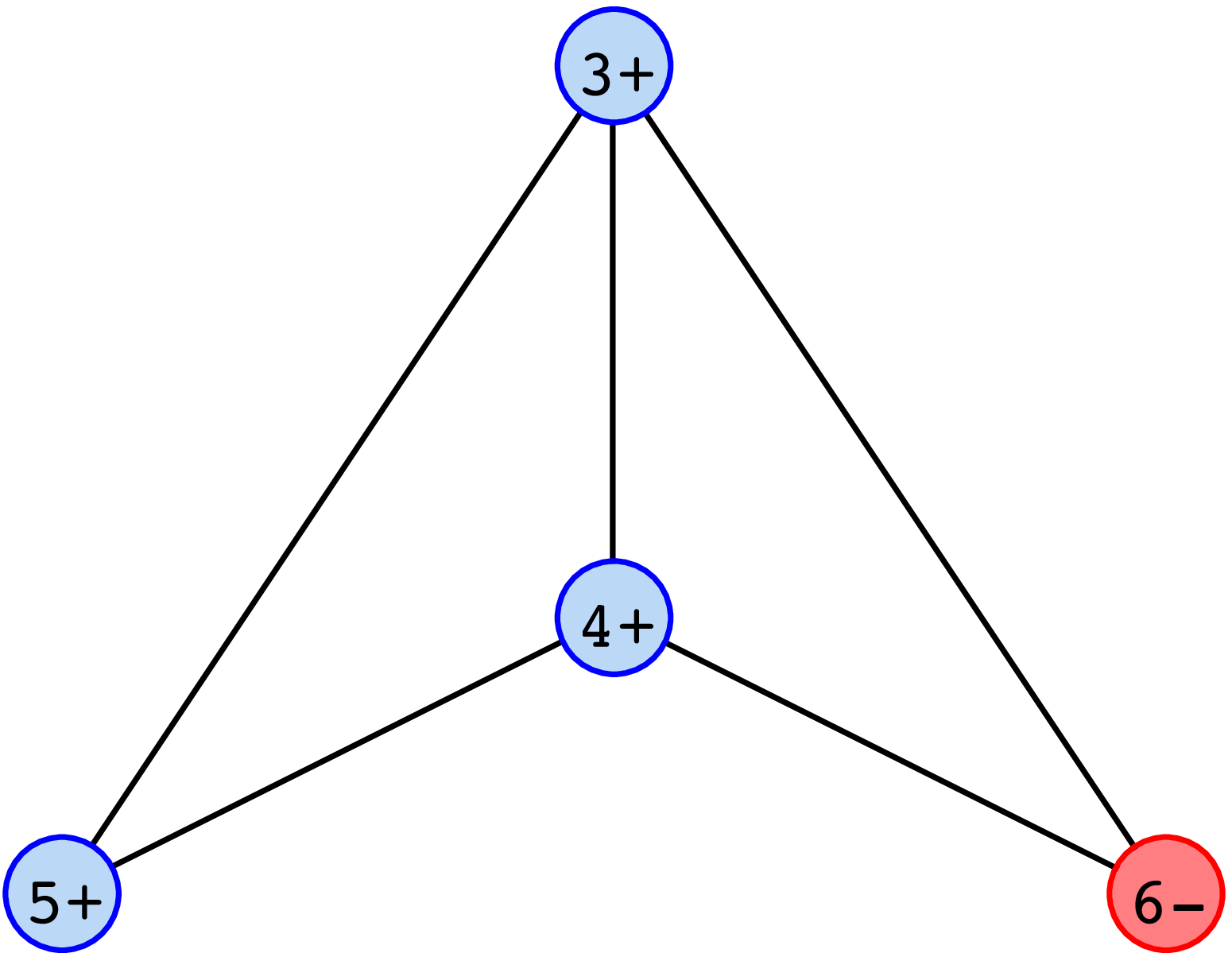}\\
\hline
$(\gpr_{v_3} \circ \gdr_{v_1,v_2})(G)$ & $(\gnr_{v_4} \circ \gpr_{v_3} \circ \gdr_{v_1,v_2})(G)$\\ &\\
\includegraphics[scale=0.3]{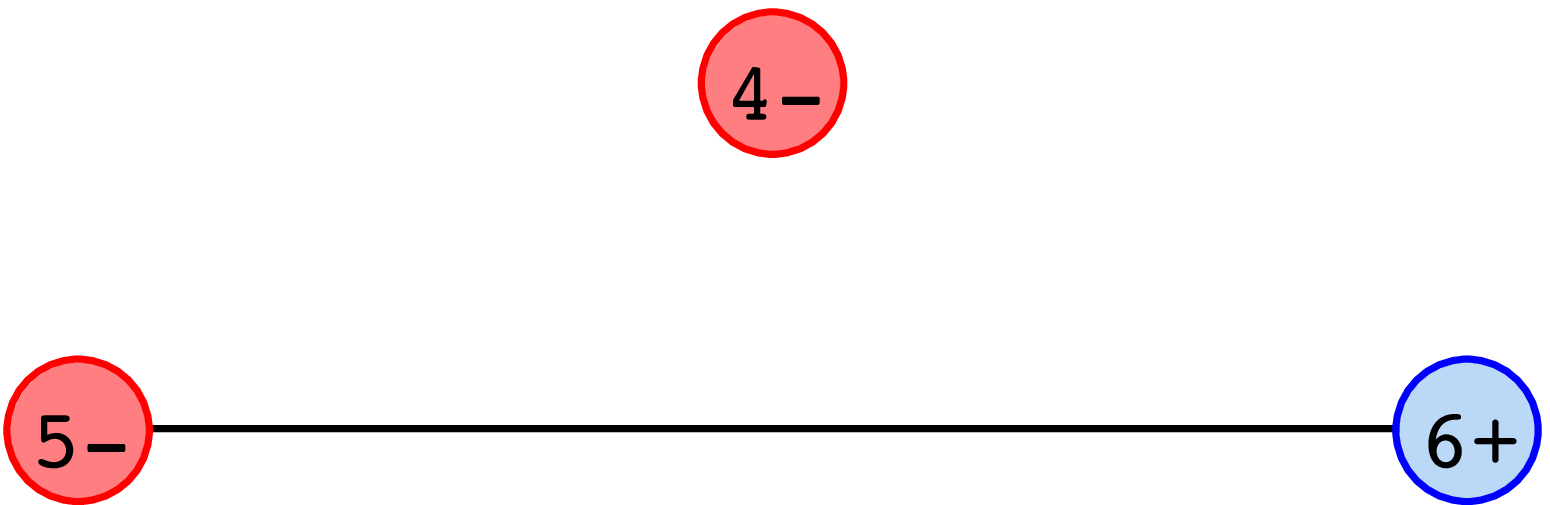}&
\includegraphics[scale=0.3]{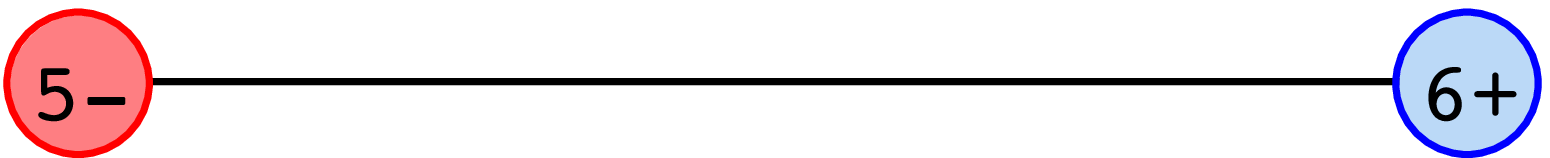}\\
\hline
$(\gpr_{v_6} \circ \gnr_{v_4} \circ \gpr_{v_3} \circ \gdr_{v_1,v_2})(G)$ & $(\gpr_{v_5} \circ \gpr_{v_6} \circ \gnr_{v_4} \circ \gpr_{v_3} \circ \gdr_{v_1,v_2})(G)$\\ &\\
\includegraphics[scale=0.3]{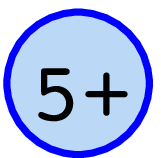}& $\emptyset$
\end{tabular}\\
\end{center}
\caption{A successful combinatorial reduction strategy.}
\end{figure}

Observe that in any nonempty signed graph, at least one of the combinatorial reduction rules is applicable, and the vertex set shrinks whenever any rule is applied. Thus every signed graph has some successful combinatorial reduction strategy. We consider the set of all successful reduction strategies of an arbitrary signed graph. In Section \ref{redDefinitions} we will obtain a simple algebraic description of those vertex sets which can be removed by some combinatorial reduction strategy. The first step in this direction is to reinterpret signed graphs in a way that will allow them to be studied algebraically.

\subsection{On simple graphs with loops}

A \textit{simple graph with loops} is a graph without multiple edges, but where a vertex may have an edge to itself. There is a bijection between signed graphs and simple graphs with loops, by regarding positive vertices to be vertices with loops and negative vertices as vertices without loops. We will thus use the two viewpoints interchangeably. In this paper, we shall use the word \textit{graph} to mean simple graph with loops.

The main advantage of this second viewpoint is that a signed graph with loops can be described by an adjacency matrix, where the diagonal of the matrix indicates which vertices have loops. If we regard the entries of this matrix as lying in the finite field $\textbf{F}_2$, then the three combinatorial reduction rules are easy to state in terms of the adjacency matrix. If we order the vertices of the graph so that the domain of the reduction comes first, then we may express the three operations in terms of block matrices, as follows. It is important to recall that the entries are in $\textbf{F}_2$, not $\textbf{R}$. The submatrix $Q$ is any $1 \times (n-1)$ matrix in the first line and any $2 \times (n-1)$ matrix in the second line. In the third line, $\textbf{0}$ denotes the $1 \times (n-1)$ vector of all $0$s.

\begin{eqnarray}
\label{combMatrix1}
 \gpr_v: & \smat{1}{Q}{Q^T}{R} &\mapsto R - Q^TQ\\
 \label{combMatrix2}
 \gdr_{v_1,v_2}: & \mat{\begin{smallmatrix} 0 & 1 \\ 1 & 0 \end{smallmatrix}}{Q}{Q^T}{R} &\mapsto R - Q^T\smat{0}{1}{1}{0} Q\\
\label{combMatrix3} 
\gnr_v: & \smat{0}{\textbf{0}}{\textbf{0}^T}{R} &\mapsto R
\end{eqnarray}

We also point out here that the positive rule and double rule each reduce the rank of the adjacency matrix by precisely the number of vertices removed. This is why Godsil and Royle \cite{ggt} refer to the double rule as a rank two reduction. This fact can be seen by realizing both rules as a sequence of row reduction operations, and then a restriction to a principle submatrix. We omit the details here since this result will also follow from Corollary \ref{rankCorollary} after discussing general graph reductions.

\section{Graph reductions in general}\label{algebraic}
We give in this section a linear algebraic description of the combinatorial graph reductions described above. This description will allow us to prove path invariance for graph reductions, and also characterize the number of times the negative rule $\gnr$ is used in a given reduction, resolving two open problems from \cite{harju}. We will also obtain formulas to compute all edge relations of a graph after reduction in terms of ranks of submatrices of the adjacency matrix. These formulas generalize the determintant formulas given in \cite{brijder}, which apply only in the absence of the negative rule. The reductions we define here are closely related to the pivot operation on matrices defined in \cite{geelen} and studied in \cite{brijder}, which we consider in Section \ref{pivots}. We observe that all of our work in this section is easily generalized to directed graphs by considering asymmetric adjacency matrices, but we consider only symmetric adjacency matrices in order to simplify notation.

\subsection{Preliminaries}

We begin with an intrinsic definition of graph reductions. In Section \ref{matrices} we will interpret this definition using matrices in block form. Suppose $G$ is a graph, on vertices $V = \{ v_1, v_2, \dots, v_n \}$, with edges $E$. We shall denote by $\mathcal{V}$ the $n$-dimensional vector space over $\textbf{F}_2$ (the finite field with two elements) with basis $V$. For any subset $W \subset V$, $\langle W \rangle$ will denote the span in $\cV$ of the vertices in $W$ (in particular, $\langle V \rangle = \mathcal{V}$). We shall denote by $\mathcal{E}$ a symmetric bilinear form on $\mathcal{V}$ defined on basis vectors as follows.

\begin{equation}
\mathcal{E}(v_i, v_j) = 
\begin{cases}
 1 & (v_i,v_j) \in E \\
 0 & (v_i,v_j) \not\in E
\end{cases}
\end{equation}

Observe that the bilinear form $\cE$ is given by the adjacency matrix $A$ of the graph, in the sense that $\cE(v_1,v_2) = v_1^T A v_2$.

This form is defined on all of $\mathcal{V}\times \mathcal{V}$ by bilinearity. Recall that we permit $G$ to have loops, and $\mathcal{E}(v_i,v_i) = 1$ if and only if vertex $v_i$ has a loop. We shall describe the results of reductions of $G$ by specifying different bilinear forms, using the following notation.

\begin{definition}
 For any set of vertices $W$, and any symmetric bilinear form $\mathcal{F}$ defined on $\langle W \rangle$, we denote by $\mathcal{G}(W,\mathcal{F})$ the graph on vertices $W$ with edges $\{ (w_i, w_j):\ \mathcal{F}(w_i,w_j) = 1 \}$.
\end{definition}

For example, the graph $G$ can be denoted $\mathcal{G}(V,\mathcal{E})$, and for any subset $W \subset V$, the graph $\mathcal{G}(W,\mathcal{E})$ is the induced subgraph of $G$ on vertices $W$. Observe that, in the above definition, $\mathcal{F}$ may be a form on a larger vector space than $\langle W \rangle$, as in the case of induced subgraphs, although only its restriction to $\langle W \rangle$ is relevant. Graph reductions will be defined by modifying the bilinear form $\mathcal{E}$ in a manner that ``forgets'' the removed vertices in a particular way. Before giving a precise definition, we shall informally motivate the idea behind it.

\subsection{Motivation}\label{motivation}

We begin with a very simple principle: if $W \subset V$ is a subset of vertices such that the induced subgraph of $G$ on vertices $W$ is not connected to the rest of the graph, then the graph reduction removing the vertices $W$ must simply be the induced subgraph on the remaining vertices. Our approach is to define the graph reduction for a set of vertices $W$ by first modifying the graph in such a way that the vertices $W$ become disconnected. This modification is naturally expressed using linear algebra. The bilinear form $\mathcal{E}$ allows not just elements of the set $V$, but in fact all elements of the vector space $\mathcal{V}$, to be regarded as vertices of a graph. By using a different basis for the vector space, a different graph on the same number of vertices is obtained, and graph reductions can be defined by performing a change of basis in a specific way. We first illustrate this idea with an example.

\begin{example}
Consider the graph on the left in Figure \ref{changeBasisExample}. In the basis $(v_1,v_2,v_2)$, the adjacency matrix is $\left[ \begin{smallmatrix} 1&1&1 \\ 1&1&0 \\ 1&0&1 \end{smallmatrix} \right]$. Consider a different basis: $(w_1,w_2,w_3) = (v_1, v_2+v_1, v_3+v_1)$. Then in this basis, the bilinear form $\mathcal{E}$ has matrix $\left[ \begin{smallmatrix} 1&0&0 \\ 0 & 0 & 1 \\ 0&1&0 \end{smallmatrix} \right]$, which is the adjacency matrix for the graph $\mathcal{G}(\{v_1,v_2+v_1,v_3+v_1\}, \cE)$. This graph is shown on the right of Figure \ref{changeBasisExample}.
\end{example}

\begin{figure}[h] 
\begin{center}
\begin{tabular}{cc}
\includegraphics[scale=0.3]{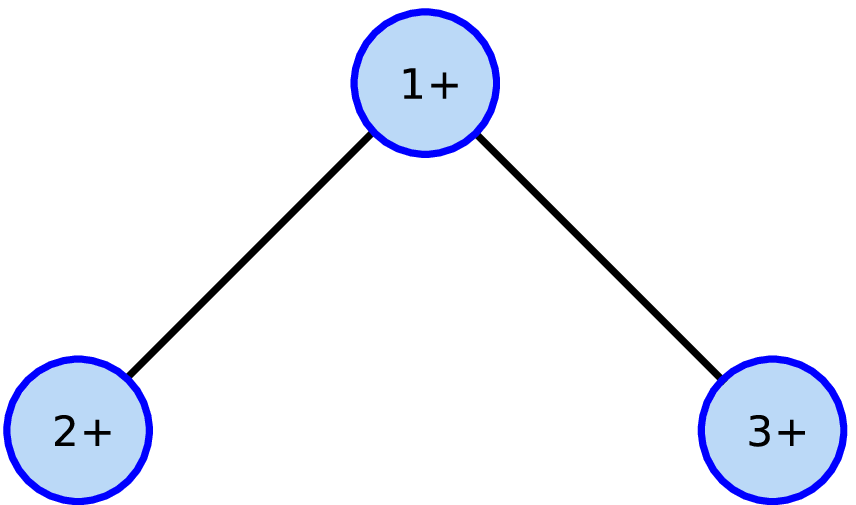} & \includegraphics[scale=0.3]{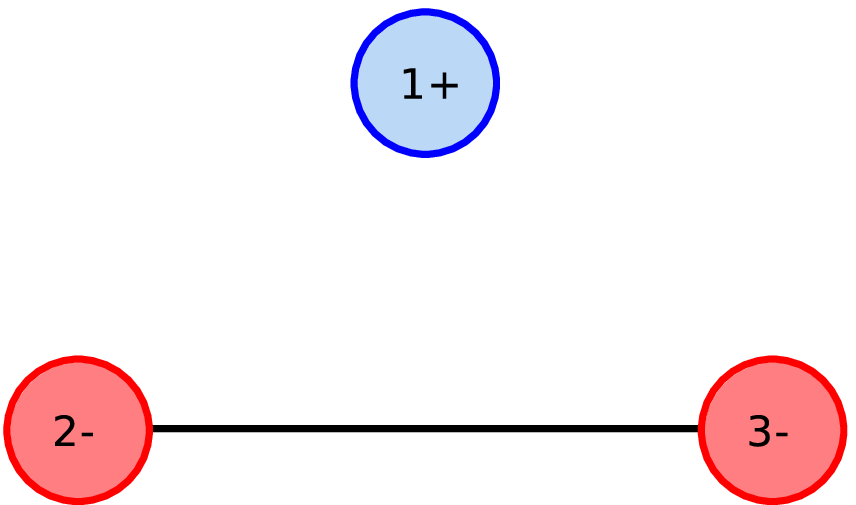}
\end{tabular}
\end{center}
\caption{A graph $G$, and the graph obtained by changing basis to $w_1 = v_1$, $w_2 = v_2+v_1$, $w_3=v_3+v_1$.}
\label{changeBasisExample}
\end{figure}

Observe that in this example, the basis is modified only by adding copies of $v_1$ to other basis vectors. In addition, the result is a graph in which the vertex $w_1$ is disconnected from the rest of the graph, and the rest of the graph is identical to the result of applying the positive rule to vertex $v_1$ in the original graph. This illustrates the principle behind our definition of graph reduction: if we wish to remove the vertices in a set $W \subset V$, we first disconnect the vertices in $W$ from the rest of the graph by changing the basis by adding linear combinations of vertices in $W$ to the vertices not in $W$, and then remove the vertices in $W$. In the example, suppose that we wish to remove the vertex $v_1$. Then we first change to the basis $(w_1,w_2,w_3)$. Then the graph reduction removing $w_1$ is the induced subgraph on vertices $(w_2,w_3) = (v_2+v_1,v_3+v_1)$. However, in the reduction process, we intend to ``forget'' the existence of the vertex $v_1$ altogether, and thus we regard $(w_2,w_3)$ as being identical to $(v_2,v_3)$ after this reduction.

We further illustrate this idea by describing the three combinatorial reduction rules in these terms.

\begin{example}[The positive rule]
Suppose that the graph $G$ on vertices $\{ v_1, \dots, v_n \}$ has adjacency matrix $\smat{1}{Q}{Q^T}{R}$ in block form. Then the bilinear form $\mathcal{E}$ becomes $\smat{1}{0}{0}{R - Q^TQ}$ in the basis $(w_1, \dots, w_n)$, where $\left[ \begin{smallmatrix} w_1 \\ \vdots \\ w_n \end{smallmatrix} \right] = \smat{1}{0}{Q^T}{I} \left[ \begin{smallmatrix} v_1 \\ \vdots \\ v_n \end{smallmatrix} \right]$. Observe that the graph corresponding to this basis is disconnected, and the induced subgraph on $\{w_2, \dots, w_n \}$ (which is congruent modulo $v_1$ to $(v_2, \dots, v_n)$) is exactly $\gpr_{v_1}(G)$.
\end{example}

\begin{example}[The double rule]
Suppose that the graph $G$ on vertices $\{ v_1, \dots, v_n \}$ has adjacency matrix $\mat{ \begin{smallmatrix} 0&1\\1&0 \end{smallmatrix} }{Q}{Q^T}{R}$ in block form. Then the bilinear form $\mathcal{E}$ becomes $\mat{ \begin{smallmatrix} 0&1\\1&0 \end{smallmatrix} }{0}{0}{R- Q^T \smat{0}{1}{1}{0} Q}$ in the basis $(w_1, \dots, w_n)$, where

\begin{equation*} \left[ \begin{smallmatrix} w_1 \\ \vdots \\ w_n \end{smallmatrix} \right] = \mat{\begin{smallmatrix} 1&0\\0&1 \end{smallmatrix} }{0}{ Q^T \smat{0}{1}{1}{0} }{I} \left[ \begin{smallmatrix} v_1 \\ \vdots \\ v_n \end{smallmatrix} \right]. \end{equation*}

Observe that the graph corresponding to this basis is disconnected, and the induced subgraph on $\{w_3, \dots, w_n \}$ (which is congruent modulo the span of  $v_1$ and $v_2$ to $(v_3, \dots, v_n)$) is exactly $\gpr_{v_1,v_2}(G)$.
\end{example}

\begin{example}[The negative rule]
By definition, the negative rule only applies to vertex $v_1$ if it is already disconnected from the rest of the graph. Hence no change of basis is necessary; the result of reducing the vertex $v_1$ is simply removing it.
\end{example}

We now make this vague notion of ``changing basis and forgetting $W$'' precise to define our notion of graph reduction.

\subsection{Definition of graph reductions}\label{redDefinitions}

Suppose that $\mathcal{W}$ is a vector subspace of $\mathcal{V}$ (for example, $\mathcal{W}$ could be the span $\langle W \rangle$ of a subset $W \subset V$). Then we wish to define the reduction of the bilinear form $\mathcal{E}$ along $\mathcal{W}$, which will be denoted $\mathcal{E}^{\mathcal{W}}$, and should correspond to ``forgetting'' the subspace $\mathcal{W}$. The easiest situation in which this can occur is if $\mathcal{E}$ can be diagonalized, in the sense that $\mathcal{V}$ can be written as a direct sum $\mathcal{V} = \mathcal{W} \oplus \mathcal{V'}$, for some other subspace $\mathcal{V'}$ (i.e. $\mathcal{V}$ is spanned by $\mathcal{W}$ and $\mathcal{V'}$, and $\mathcal{W} \cap \mathcal{V'} = \{ 0 \}$), and $\mathcal{E}(w,v) = 0$ whenever $w \in \mathcal{W}$ and $v \in \mathcal{V'}$. In this case, we define the reduction $\mathcal{E}^{\mathcal{W}}$ by projecting onto $\mathcal{V'}$ and then applying $\mathcal{E}$. The effect of this is that $\mathcal{E}^{\mathcal{W}}$ is identical to $\mathcal{E}$ on the space $\mathcal{V'}$, and is equal to $0$ when either argument comes from $\mathcal{W}$. Thus in the sense of the previous section, $\mathcal{E}^{\mathcal{W}}$ corresponds to modifying $\mathcal{E}$ so that it ``forgets'' $\mathcal{W}$. Of course, this definition will only work for certain subspaces $\mathcal{W}$, which we now define.

\begin{definition}
 If $\mathcal{W}$ is any vector subspace of $\mathcal{V}$, the \emph{$\mathcal{E}$-annihilator}, denoted $\mathcal{W}^{\perp \mathcal{E}}$, is the set $\{v \in \mathcal{V} :\ \mathcal{E}(v,w) = 0\ \forall w \in \cW \}$.
\end{definition}

\begin{definition}
 A vector subspace $\mathcal{W}$ of $\mathcal{V}$ is $\mathcal{E}$-\emph{reducible} if $\mathcal{W} + \mathcal{W}^{\perp \mathcal{E}} = \mathcal{V}$, i.e. if $\mathcal{W}$ and its $\mathcal{E}$-annihilator span $\mathcal{V}$. A subset $W$ of vertices in the graph $G$ is \emph{reducible in} $G$ if $\langle W \rangle$ is $\mathcal{E}$-reducible.
\end{definition}

We will see in Section \ref{combMinimal} that reducible sets of vertices correspond precisely to sets of vertices that can be removed by the three combinatorial reduction rules defined in Section \ref{combinatorial}. Observe that we do not require that $\mathcal{W}$ be disjoint from its $\mathcal{E}$-annihilator. Combinatorially, a set of vertices $W$ such that $\langle W \rangle$ is $\mathcal{E}$-reducible and disjoint from its $\mathcal{E}$-annihilator if and only if it can be removed from the graph using only the positive rule and the double rule (see Section \ref{nullitySection}).

Notice that $\mathcal{V}$ is not necessary a \textit{direct} sum of $\mathcal{W}$ and $\mathcal{W}^{\perp \mathcal{E}}$, since $\mathcal{W} \cap \mathcal{W}^{\perp \mathcal{E}}$ may be nonempty. However, as long as $\mathcal{W}$ and $\mathcal{W}^{\perp \mathcal{E}}$ span $\mathcal{V}$, it is possibly to find a subspace $\mathcal{V'} \subset \mathcal{W}^{\perp \mathcal{E}}$ such that $\mathcal{V}$ is the direct sum $\mathcal{W} \oplus \mathcal{V'}$. Projecting to any such subspace $V'$ and applying $\mathcal{E}$ gives the same form $\mathcal{E}^{\mathcal{W}}$ for any choice of $\mathcal{V'}$, as the following lemma demonstrates.

\begin{lemma}
Suppose that $\mathcal{W} \subset \mathcal{V}$. Then for any $v_1,v_2,v_1',v_2' \in \mathcal{W}^{\perp \mathcal{E}}$ such that $v_1 - v_1'$ and $v_2 - v_2'$ both lie in $\mathcal{W}$, $\mathcal{E}(v_1,v_2) = \mathcal{E}(v_1',v_2')$.
\end{lemma}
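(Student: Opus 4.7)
The plan is to prove this by a direct bilinearity expansion, with the one subtle observation that the difference vectors $v_i - v_i'$ lie not only in $\mathcal{W}$ but also in $\mathcal{W}^{\perpE}$.

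First I would set $w_1 = v_1 - v_1'$ and $w_2 = v_2 - v_2'$, so by hypothesis $w_1, w_2 \in \mathcal{W}$. The key observation is that $\mathcal{W}^{\perpE}$ is a vector subspace of $\mathcal{V}$ (it is cut out by the linear conditions $\mathcal{E}(v, w) = 0$ for $w$ ranging over a spanning set of $\mathcal{W}$), so since $v_1, v_1' \in \mathcal{W}^{\perpE}$ their difference $w_1$ also lies in $\mathcal{W}^{\perpE}$, and similarly $w_2 \in \mathcal{W}^{\perpE}$. Thus $w_1, w_2 \in \mathcal{W} \cap \mathcal{W}^{\perpE}$.

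Now I would expand using bilinearity and symmetry of $\mathcal{E}$:
\begin{equation*}
\mathcal{E}(v_1, v_2) = \mathcal{E}(v_1' + w_1, v_2' + w_2) = \mathcal{E}(v_1', v_2') + \mathcal{E}(v_1', w_2) + \mathcal{E}(w_1, v_2') + \mathcal{E}(w_1, w_2).
\end{equation*}
Each of the last three terms vanishes: $\mathcal{E}(v_1', w_2) = 0$ because $v_1' \in \mathcal{W}^{\perpE}$ and $w_2 \in \mathcal{W}$; $\mathcal{E}(w_1, v_2') = \mathcal{E}(v_2', w_1) = 0$ because $v_2' \in \mathcal{W}^{\perpE}$ and $w_1 \in \mathcal{W}$; and $\mathcal{E}(w_1, w_2) = 0$ because $w_1 \in \mathcal{W}^{\perpE}$ and $w_2 \in \mathcal{W}$. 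Therefore $\mathcal{E}(v_1, v_2) = \mathcal{E}(v_1', v_2')$, as claimed.

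There is no real obstacle here beyond recognizing that the hypothesis $v_i, v_i' \in \mathcal{W}^{\perpE}$ forces $v_i - v_i'$ to land in the intersection $\mathcal{W} \cap \mathcal{W}^{\perpE}$; without this, the cross-term $\mathcal{E}(w_1, w_2)$ would not be controlled and the lemma would fail. This is precisely why the definition of $\mathcal{E}^{\mathcal{W}}$ via projection onto any complementary $\mathcal{V}' \subset \mathcal{W}^{\perpE}$ is well-defined even when $\mathcal{W} \cap \mathcal{W}^{\perpE} \neq \{0\}$.
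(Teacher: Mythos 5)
Your proof is correct: the bilinearity expansion is valid, and you rightly note that $\mathcal{W}^{\perp\mathcal{E}}$ is a subspace so that $w_1 = v_1 - v_1'$ lies in it, which kills the term $\mathcal{E}(w_1,w_2)$. The paper dismisses this lemma as ``a trivial verification'' and gives no details, so your argument is exactly the verification being alluded to; the only quibble is your closing remark that the lemma ``would fail'' without the intersection observation --- a different grouping, e.g.\ writing $\mathcal{E}(v_1,v_2)-\mathcal{E}(v_1',v_2') = \mathcal{E}(v_1, v_2-v_2') + \mathcal{E}(v_1-v_1', v_2')$, proves the lemma using only $v_1, v_2' \in \mathcal{W}^{\perp\mathcal{E}}$ and the differences lying in $\mathcal{W}$.
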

\begin{proof}
This is a trivial verification.
\end{proof}

This fact shows that the following is well-defined.

\begin{definition}\label{reductionFormDefn}
Suppose that $\cW \subset \cV$ is $\cE$-reducible. Then the \emph{reduction of $\cE$ along $\cW$}, denoted $\cE^\cW$, is a bilinear form on $\cV$ defined as follows. For any $v_1,v_2 \in \cV$, and any $v_1', v_2' \in \cW^{\perp \cE}$ such that $v_1 - v_1'$ and $v_2 - v_2'$ both lie in $\cW$ (such $v_1',v_2'$ exist because $\cW$ is $\cE$-reducible), define $\cE^\cW(v_1,v_2) = \cE(v_1',v_2')$.
\end{definition}

\begin{example}
Refer back to the examples in Section \ref{motivation}. In each example, take $\cW$ to be the span $\langle W \rangle$, and observe that $\cE^\cW$, when restricted to the vertices in the complement of $W$, is precisely the bilinear form corresponding to the graph obtained by removing the vertices $W$ with a combinatorial reduction rule.
\end{example}

Finally, we are able to use this approach to define graph reductions. The above example shows that this does, indeed, generalize the combinatorial reduction rules. 

\begin{definition} \label{reductionDefn}
If $W$ is a reducible set of vertices in $G$, the \emph{graph reduction of $G$ along vertices $W$} is
\begin{equation}
 \Gamma_{W}(G)= \mathcal{G}(V \backslash W, \mathcal{E}^{\langle W \rangle}).
\end{equation}
\end{definition}

This abstract definition is the easiest to manipulate to prove theorems such as path-invariance (Theorem \ref{pathInvariance}), but it is also useful to understand how this definition looks when written more explicitly using matrices. We examine this now.

\subsection{Matrix description}\label{matrices}

Suppose that $W \subset V$ is a set of vertices, and assume that $V$ is ordered with vertices of $W$ coming first. Then in this basis, $A$ can be written in block form:

\begin{equation}
 A = \mat{P}{Q}{Q^T}{R}.
\end{equation}

Thus $P$ gives the adjacency matrix for the induced subgraph on vertices $W$, $R$ for the induced subgraph on $V \backslash W$, and $Q$ describes the edges between these sets of vertices. Any vector in $\cV$ may be written in terms of this block form as $\col{w}{v}$, where $w \in \langle W \rangle, v \in \langle V \backslash W \rangle$. Whether $W$ is reducible is simply expressed in terms of this block form.

\begin{proposition}\label{matrixReducibility}
 In the notation above, the vertices $W$ are reducible if and only if the image of $Q$ is contained in the image of $P$. Equivalently, $W$ is reducible if and only if there exists a matrix $M$ such that $Q = PM$.
\end{proposition}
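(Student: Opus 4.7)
The plan is to unpack both the $\cE$-annihilator and the reducibility condition in block form, reducing the problem to a routine statement about solvability of linear equations.

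First I would describe $\langle W \rangle^{\perpE}$ in block form. A vector $\col{w'}{v'}$ (with $w' \in \langle W \rangle$, $v' \in \langle V \setminus W \rangle$) lies in $\langle W \rangle^{\perpE}$ exactly when $\cE\!\left(\col{w'}{v'}, \col{x}{0}\right) = 0$ for every $x \in \langle W \rangle$. Expanding with the block form of $A$ and using that $P$ is symmetric (as $A$ is), this condition becomes $Pw' + Qv' = 0$. So
\begin{equation*}
\langle W \rangle^{\perpE} = \left\{ \col{w'}{v'} :\ Pw' + Qv' = 0 \right\}.
\end{equation*}

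Next I would translate the reducibility condition $\langle W \rangle + \langle W \rangle^{\perpE} = \cV$. An arbitrary vector $\col{a}{b} \in \cV$ decomposes as $\col{w_1}{0} + \col{w'}{v'}$ with the first summand in $\langle W \rangle$ and the second in $\langle W \rangle^{\perpE}$ if and only if $v' = b$, $w_1 = a - w'$, and $Pw' = Qb$ (using $-Qb = Qb$ over $\textbf{F}_2$). The choice of $w_1$ is free once $w'$ is chosen, so the decomposition exists for every $\col{a}{b}$ precisely when the equation $Pw' = Qb$ is solvable in $w'$ for every $b \in \langle V \setminus W \rangle$. This is exactly the statement that $\mathrm{im}(Q) \subseteq \mathrm{im}(P)$.

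Finally, the equivalence between $\mathrm{im}(Q) \subseteq \mathrm{im}(P)$ and the existence of a matrix $M$ with $Q = PM$ is a standard observation: the columns of $M$ may be chosen to be any preimages under $P$ of the corresponding columns of $Q$, and conversely any such $M$ witnesses the containment of images. I expect no real obstacle here; the only subtlety is keeping track of the symmetry of $P$ and the fact that we work over $\textbf{F}_2$, but once the annihilator is written out in block form the rest is bookkeeping.
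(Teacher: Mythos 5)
Your proof is correct and follows essentially the same route as the paper's: compute $\langle W\rangle^{\perpE}$ in block form as the solution set of $Pw' + Qv' = 0$, then observe that spanning $\cV$ amounts to solvability of $Pw' = Qb$ for every $b$, i.e.\ $\mathrm{im}(Q) \subseteq \mathrm{im}(P)$. You spell out the decomposition and the symmetry of $P$ a bit more explicitly than the paper does, but the argument is the same.
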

\begin{proof}
 Observe that $\col{w}{v} \in \cW^{\perp \cE}$ if and only if $Pw + Q v = 0$ (here by $0$ we mean an all-0 matrix). Thus $\cW$ and $\cW^{\perp \cE}$ will span $\cV$ if and only if for every $v \in \langle V \backslash W \rangle$, there exists $w \in \langle W \rangle$ such that $\col{w}{v} \in \cW^{\perpE}$, which is true if and only if the image of $Q$ is contained in the image of $P$.
\end{proof}

\begin{example} \label{minimalExample}
Suppose that $P$ is invertible. Then the set $W$ is reducible, and the matrix $M$ mentioned in Proposition \ref{matrixReducibility} must be $P^{-1} Q$. The special cases $P = \left[ 1 \right]$ and $P = \smat{0}{1}{1}{0}$ correspond to the positive rule and the double rule. On the other hand, suppose that $W = \{v_1\}$ is a single negative vertex. Then $P = \left[ 0 \right]$, and $W$ is reducible if and only if $Q = 0$; in other words, a single negative vertex is reducible if and only if it is isolated. This matches the definition of the negative rule.
\end{example}

Assume now that the vertices $W$ are indeed reducible in $G$. We shall give a formula for the adjacency matrix of $\Gamma_W (G)$.

\begin{proposition}\label{matrixGamma}
 Let $A = \smat{P}{Q}{Q^T}{R}$ be the adjacency matrix of $G$ in the basis $(v_1, v_2, \dots, v_n)$,  let $W$ be the subset of $\{ v_1,v_2, \dots, v_k \}$ of vertices, and suppose that $W$ is reducible. Let $M$ be a matrix such that $Q = PM$ (which exists since $W$ is reducible). Then the adjacency matrix of $\Gamma_W(G)$, in the basis $(v_{k+1},v_{k+2},\dots,v_n)$, is \mbox{$R-M^TPM$}. If $P$ invertible, this matrix can be written $R-Q^T P^{-1} Q$.
\end{proposition}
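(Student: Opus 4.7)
The plan is to apply Definition \ref{reductionFormDefn} directly to each pair $v_i, v_j$ with $i, j > k$. I will exhibit, for each such $v_i$, a representative $v_i' \in \langle W \rangle^{\perpE}$ satisfying $v_i - v_i' \in \langle W \rangle$; then $\cE^{\langle W \rangle}(v_i, v_j) = \cE(v_i', v_j')$ will be the $(i,j)$ entry of the adjacency matrix of $\Gamma_W(G)$. Writing vectors in the block form determined by the ordered basis with the vertices of $W$ first, we have $v_i = \col{0}{e_i}$ for $i > k$, and the natural candidate is $v_i' = \col{Me_i}{e_i}$, using any matrix $M$ with $Q = PM$ (which exists by Proposition \ref{matrixReducibility}). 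This choice automatically yields $v_i - v_i' = \col{Me_i}{0} \in \langle W \rangle$ over $\textbf{F}_2$.

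To confirm $v_i' \in \langle W \rangle^{\perpE}$, observe that a vector lies in $\langle W \rangle^{\perpE}$ precisely when the top block of $A v_i'$ vanishes. A direct block computation gives
\[ A v_i' = \mat{P}{Q}{Q^T}{R} \col{Me_i}{e_i} = \col{PMe_i + Qe_i}{Q^T M e_i + R e_i}, \]
and the top block $PMe_i + Qe_i = Qe_i + Qe_i$ vanishes in $\textbf{F}_2$, as required.

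The main computation is now immediate. Since $A v_j' = \col{0}{Q^T M e_j + R e_j}$, only the bottom block of $v_i'$ contributes to the form, so
\[ \cE^{\langle W \rangle}(v_i, v_j) = \cE(v_i', v_j') = e_i^T Q^T M e_j + e_i^T R e_j. \]
Symmetry of $P$ (as a principal submatrix of the symmetric $A$) together with $Q = PM$ gives $Q^T = M^T P$, whence $Q^T M = M^T P M$. Thus $\cE^{\langle W \rangle}(v_i, v_j)$ is the $(i,j)$ entry of $R + M^T P M$, which equals $R - M^T P M$ over $\textbf{F}_2$. For the invertible case, $M = P^{-1} Q$ is forced, and $M^T P M = Q^T (P^{-1})^T P P^{-1} Q = Q^T P^{-1} Q$ using symmetry of $P$.

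Little obstructs the argument: it is essentially a block-matrix bookkeeping exercise. The only point warranting attention is that $M$ need not be unique when $P$ is singular, but any two valid choices of $M$ differ by a matrix whose image lies in $\ker P$, so the difference is killed on both sides of $M^T P M$ and the formula is unambiguous—this is precisely the well-definedness expressed by the lemma preceding Definition \ref{reductionFormDefn}.
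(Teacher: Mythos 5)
Your proposal is correct and follows essentially the same route as the paper: the representative $v_i' = \col{Me_i}{e_i}$ you choose for each $v_i$ is exactly the image of $v_i$ under the projection $\smat{0}{-M}{0}{I}$ that the paper uses, and the paper simply packages your entry-by-entry computation into the single block product $\Pi^T A \Pi = \smat{0}{0}{0}{R - M^TPM}$. Your closing remark on the independence of the choice of $M$ matches the observation the paper makes immediately after the proposition.
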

\begin{proof}
Let $P$ be the matrix $\smat{0}{-M}{0}{I}$. Then $P^2 = P$, i.e. $P$ is a projection. Also, the kernel of $P$ is precisely $\langle W \rangle$, and the image of $P$ is contained in $\langle W \rangle^{\perp \cE}$ (where $\cE$ is the bilinear form on $\cV$ corresponding to $A$). It follows that for any $v \in \cV$, $Pv - v \in \langle W \rangle$, and $Pv \in \langle W \rangle^{\perp \cE}$. Thus by definition \ref{reductionFormDefn}, $\cE^\cW (v_1, v_2) = \cE(Pv_1, Pv_2)$. Thus the matrix of $\cE^\cW$ is $P^T A P$. By a simply calculation, this is $\smat{0}{0}{0}{R - M^T P M}$ in block form. Restricting to $\langle V \backslash W \rangle$, we see by definition \ref{reductionDefn} that the adjacency matrix of $\Gamma_W(G)$ is $R - M^TPM$, as claimed. If $P$ is invertible, then $M = P^{-1}Q$ and thus $R - M^T P M = R - Q^T P^{-1} Q$.
\end{proof}

Observe that Proposition \ref{matrixGamma} implies that the expression $R - M^T P M$ does not depend on the choice of $M$. This fact can also established directly, using the fact that $PM$ and $M^T P$ are both independent of the choice of $M$, being $Q$ and $Q^T$, respectively.

Finally, we observe that there is a familiar linear algebraic way to compute the adjacency matrix of $\Gamma_W (G)$. If row-reduction operations are performed on $A$ until the lower-left corner becomes $0$, the result is $\mat{I}{0}{-M^T}{I} \mat{P}{Q}{Q^T}{R} = \mat{P}{Q}{0}{R - M^T P M}$. In the special case where $P$ is invertible, this reveals the relation between graph reductions and pivots, which has been considered in special cases in \cite{brijder} and which we consider in general in Section \ref{pivots}.

\subsection{Rank and determinant formulas}\label{rankFormulas}
The main insight of the algebraic approach to graph reductions is that questions about graph reductions reduce to questions about ranks of submatrices of the adjacency matrix, considered in the field $\textbf{F}_2$. We shall use the following terminology.

\begin{definition}
 If $G$ is a graph, and $W \subset V$ is a subset of the vertices of $G$, then the \emph{rank of $W$ in $G$}, $\emph{rank}_G(W)$, is the rank of the bilinear form $\cE$ restricted to $\langle W \rangle$. The \emph{nullity} of $W$ is $|W| - \emph{rank}_G(W)$. The set $W$ is called \emph{singular} if it has positive nullity, and \emph{non-singular} otherwise.
\end{definition}

Recall that the rank of the bilinear form $\cE$ restricted to $\langle W \rangle$ is equal to the rank of the submatrix of the adjacency matrix given by the considering only rows and columns corresponding to vertices in $W$.

We shall sometimes refer to the rank or nullity of $G$, by which we shall simply mean the rank or nullity of the full vertex set $V$. We observe that the rank of the graph $G$ is referred to as the \emph{binary rank} of $G$ in \cite{ggt}, in order to emphasize that the adjacency matrix is considered in $\textbf{F}_2$. We will sometimes wish to consider ranks of submatrices of the adjacency matrix, thus we use the following definition as well.

\begin{definition}
 If $G$ is a graph, and $W_1,W_2 \subset V$ are two subsets of the vertices of $G$, then $\emph{rank}_G(W_1,W_2)$ denotes the rank of the submatrix of the adjacency matrix with rows $W_1$ and columns $W_2$. More intrinsically, this is the rank of the map $\langle W_1 \rangle \rightarrow \langle W_2 \rangle^\ast$ induced by $\cE$, where $\langle W_2 \rangle^*$ is the dual vector space of $\langle W_2 \rangle$.
\end{definition}

\begin{theorem}\label{ranksOfReductions}
 If $G$ is a graph, $W$ is a reducible set of vertices in $G$, and $W_1,W_2 \subset V \backslash W$ are two sets of vertices in $\Gamma_W(G)$, then
\begin{equation}
 \emph{rank}_{\Gamma_W(G)}(W_1,W_2) = \emph{rank}_G(W \cup W_1, W \cup W_2) - \emph{rank}_G(W).
\end{equation}
\end{theorem}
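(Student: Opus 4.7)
My plan is to deduce the identity from the matrix description of $\Gamma_W(G)$ provided by Proposition \ref{matrixGamma}, by doing a block row and column reduction on the relevant submatrix of the adjacency matrix $A$ of $G$. First I would order the vertices so that $W$ appears first, and write
$$A = \mat{P}{Q}{Q^T}{R},$$
where $P$ is the submatrix on $W$. Since $W$ is reducible, Proposition \ref{matrixReducibility} supplies a matrix $M$ with $Q = PM$, and Proposition \ref{matrixGamma} identifies the adjacency matrix of $\Gamma_W(G)$ with $R - M^T P M$.

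Next I would focus on the submatrix $A|_{W\cup W_1,\, W\cup W_2}$. Writing $Q_i$ for the submatrix of $Q$ formed by the columns indexed by $W_i$, and $R_{12}$ for the submatrix of $R$ with rows $W_1$ and columns $W_2$, this submatrix has the block form
$$\mat{P}{Q_2}{Q_1^T}{R_{12}}.$$
Let $M_i$ denote the columns of $M$ indexed by $W_i$, so that $Q_i = P M_i$ and hence $Q_i^T = M_i^T P$. Performing block row operations (subtract $M_1^T$ times the top block of rows from the bottom) annihilates the lower-left block $Q_1^T = M_1^T P$, leaving the entry $R_{12} - M_1^T Q_2$ in the lower-right. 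A symmetric block column operation with $M_2$ then annihilates the upper-right block $Q_2 = P M_2$. The result is the block-diagonal matrix
$$\mat{P}{0}{0}{R_{12} - M_1^T P M_2},$$
whose rank is $\rk{P} + \rk{R_{12} - M_1^T P M_2}$.

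Finally, I would identify the two summands with the terms in the statement. The block $P$ is the adjacency-matrix submatrix defining $\rkg{W}$, so $\rk{P} = \rkg{W}$. The block $R_{12} - M_1^T P M_2$ is precisely the restriction of $R - M^T P M$ to rows $W_1$ and columns $W_2$, that is, the submatrix of the adjacency matrix of $\Gamma_W(G)$ with rows $W_1$ and columns $W_2$, and so its rank is $\rk{W_1, W_2}$ in $\Gamma_W(G)$. Rearranging gives the claimed formula. The only substantive point to verify is the identity $M_i^T P = Q_i^T$, which makes the block row and column reductions available, and which is immediate from $Q = PM$. Since all other parts of the argument are bookkeeping and Proposition \ref{matrixGamma} already guarantees that the final answer is independent of the choice of $M$, I do not foresee any real obstacle.
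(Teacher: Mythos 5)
Your proof is correct. It reaches the same structural endpoint as the paper's proof --- a block-diagonal matrix $\bigl[\begin{smallmatrix} P & 0 \\ 0 & \ast \end{smallmatrix}\bigr]$ whose rank splits as $\rkg{W}$ plus the rank of the reduced block --- but it gets there by explicit block Gaussian elimination on the submatrix $A_{W\cup W_1,\,W\cup W_2}$, using $Q_i = PM_i$ to clear the off-diagonal blocks, rather than by the paper's intrinsic route. The paper instead fixes a complement $\cV'\subset \cW^{\perp\cE}$ with $\cV = \cW\oplus\cV'$, uses the projection $\pi$ along $\cW$ and the identity $\cE^{\cW}(v_1,v_2)=\cE(\pi v_1,\pi v_2)$, and obtains the block-diagonal form from $\cV'\subset\cW^{\perp\cE}$ together with $\pi(\langle W_1\rangle)+\langle W\rangle = \langle W\cup W_1\rangle$. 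Your left- and right-multiplications by $\bigl[\begin{smallmatrix} I & 0 \\ -M_1^T & I\end{smallmatrix}\bigr]$ and $\bigl[\begin{smallmatrix} I & -M_2 \\ 0 & I\end{smallmatrix}\bigr]$ are precisely the matrix realization of that projection, so the two arguments are really the same diagonalization idea in different clothing: yours is more elementary and self-contained at the matrix level (every step is a rank-preserving unipotent multiplication, and the identification of $R_{12}-M_1^TPM_2$ with the relevant submatrix of $R-M^TPM$ is immediate), while the paper's version avoids choosing $M$ or coordinates and dovetails with the intrinsic Definition \ref{reductionFormDefn}, which is why the paper can then reuse the same projection machinery elsewhere. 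The only point worth making explicit in your write-up is that $Q_i^T = M_i^TP$ uses the symmetry of $P$; this is harmless since $P$ is the adjacency matrix of the induced subgraph on $W$.
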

\begin{proof}
 Let $\cV' \subset \cW^{\perp \cE}$ be a complementary subspace to $\cW$, in the sense that $\cV = \cW \oplus \cV'$ (as discussed in Section \ref{redDefinitions}). Then there is a projection $\pi : \cV \rightarrow \cV'$ along $\cW$ (i.e. the kernel of $\pi$ is $\cW$ and $\pi^2 = \pi$). When $\pi$ is restricted to $\langle V \backslash W \rangle$, it is an isomorphism to $\cV'$. By definition \ref{reductionFormDefn}, the bilinear form $\cE^\cW$ is given by $\cE^\cW(v_1,v_2) = \cE(\pi(v_1),\pi(v_2))$. Thus $\textrm{rank}_{\Gamma_W(G)} (W_1,W_2)$ is equal to the rank of the bilinear form $\cE$ restricted to $\pi(\langle W_1 \rangle ) \times \pi(\langle W_2 \rangle)$. Let $B$ be the matrix of $\cE$ restricted to $\pi(\langle W_1 \rangle ) \times \pi(\langle W_2 \rangle)$, and let $C$ be the matrix of $\cE$ restricted to $\langle W \rangle \times \langle W \rangle$. Then because $\cV' \subset \cW^{\perp \cE}$, matrix of $\cE$ resrticted to $\pi(\langle W_1 + W  \rangle ) \times \pi(\langle W_2 + W \rangle)$ has a ``diagonal'' block form $\smat{B}{0}{0}{C}$, hence its rank is $\textrm{rank}(B) + \textrm{rank}(C)$, which is $\textrm{rank}_{\Gamma_W(G)}(W_1,W_2) + \textrm{rank}_G(W)$. Now, since $\pi$ is projection along $\langle W \rangle$, it follows that $\pi(\langle W_1 \rangle) + \langle W \rangle = \langle W_1 \rangle + \langle W \rangle = \langle W \cup W_1 \rangle$, and similarly $\pi(\langle W_2 \rangle) + \langle W \rangle =  \langle W \cup W_2 \rangle$. Therefore $\textrm{rank}_{\Gamma_W(G)}(W_1,W_2) + \textrm{rank}_G(W) = \textrm{rank}_G(W \cup W_1, W \cup W_2)$, which gives the theorem.
 \end{proof}

\begin{corollary}\label{rankCorollary}
 $\textrm{rank}(G) = \textrm{rank}_G(W) + \textrm{rank}(\Gamma_W(G))$.
\end{corollary}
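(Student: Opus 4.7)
The plan is to obtain this as an immediate specialization of Theorem \ref{ranksOfReductions}, since the theorem was stated in the generality of ranks of arbitrary submatrices precisely so that the total rank identity would drop out by plugging in the full vertex set on each side.

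Concretely, I would apply Theorem \ref{ranksOfReductions} with $W_1 = W_2 = V \setminus W$. The left-hand side becomes $\textrm{rank}_{\Gamma_W(G)}(V \setminus W, V \setminus W)$. Since $V \setminus W$ is by definition the entire vertex set of $\Gamma_W(G)$, this submatrix is the full adjacency matrix of $\Gamma_W(G)$, and hence its rank is just $\textrm{rank}(\Gamma_W(G))$ (using the convention, noted just after the definitions, that $\textrm{rank}(G)$ means the rank of the bilinear form on all of $\cV$, i.e.\ the rank of the full adjacency matrix).

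On the right-hand side, $W \cup W_1 = W \cup W_2 = W \cup (V \setminus W) = V$, so the first term reduces to $\textrm{rank}_G(V, V)$, which is the rank of the full adjacency matrix of $G$, namely $\textrm{rank}(G)$. Substituting these into the conclusion of Theorem \ref{ranksOfReductions} gives
\begin{equation*}
\textrm{rank}(\Gamma_W(G)) = \textrm{rank}(G) - \textrm{rank}_G(W),
\end{equation*}
which rearranges to the claimed identity.

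There is essentially no obstacle here: the only subtlety is checking that the two notational conventions — $\textrm{rank}$ of a graph as the rank of its adjacency matrix, and $\textrm{rank}_G(W_1, W_2)$ as the rank of a submatrix — line up as described when $W_1 = W_2$ equals the full vertex set in question. Both unpack to the rank of the corresponding adjacency matrix over $\textbf{F}_2$, so no additional argument is needed.
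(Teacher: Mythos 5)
Your proposal is correct and matches the paper's proof exactly: the paper's argument is simply to take $W_1 = W_2 = V \backslash W$ in Theorem \ref{ranksOfReductions}. Your additional check that the two rank conventions agree on the full vertex set is a reasonable (if routine) elaboration of the same one-line argument.
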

\begin{proof}
 Take $W_1 = W_2 = V \backslash W$ in the theorem.
\end{proof}

\begin{corollary}\label{rankstellvertices}
 If $W$ is a reducible vertex set in $G$ and $v,w \in V \backslash W$, then the edge $(v,w)$ is present in $\Gamma_W(G)$ if any only if
\begin{equation}
 \textrm{rank}_G(W \cup \{v\},W \cup \{w\}) > \rkg{W}.
\end{equation}
In case $W$ is nonsingular, this is equivalent to saying that $\emph{det}(A_{W \cup \{v\},W \cup \{w\}}) \neq 0$.
\end{corollary}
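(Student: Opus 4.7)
The corollary is essentially the edge-level specialization of Theorem \ref{ranksOfReductions}. The plan is to set $W_1 = \{v\}$ and $W_2 = \{w\}$ in that theorem, which yields
$$\text{rank}_{\Gamma_W(G)}(\{v\},\{w\}) = \text{rank}_G(W \cup \{v\}, W \cup \{w\}) - \rkg{W}.$$
This is legitimate because $v, w \in V \setminus W$, so $v$ and $w$ are vertices of $\Gamma_W(G)$.

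Next I would interpret the left-hand side. It is the rank of the $1 \times 1$ submatrix of the adjacency matrix of $\Gamma_W(G)$ at position $(v,w)$. Over $\textbf{F}_2$ that rank equals $1$ when the entry is $1$ (the edge is present) and $0$ otherwise. Thus the edge $(v,w)$ lies in $\Gamma_W(G)$ if and only if the right-hand side is positive, which is the stated inequality $\text{rank}_G(W \cup \{v\}, W \cup \{w\}) > \rkg{W}$. As a sanity check, the difference on the right is always $0$ or $1$, since adjoining a single row and a single column to a submatrix can raise its rank by at most $1$, so the inequality cleanly encodes the binary edge-present/edge-absent dichotomy.

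For the second assertion, when $W$ is nonsingular we have $\rkg{W} = |W|$ by definition, so $A_{W \cup \{v\}, W \cup \{w\}}$ is a square matrix of size $|W|+1$. The inequality then becomes the statement that this matrix has full rank $|W|+1$, which over any field (including $\textbf{F}_2$) is equivalent to having nonzero determinant. There is no serious obstacle anywhere in this argument: the real content is carried entirely by Theorem \ref{ranksOfReductions}, and the remainder is just the two standard observations that a $1 \times 1$ matrix has rank $1$ iff its unique entry is nonzero, and that full rank of a square matrix is detected by the determinant.
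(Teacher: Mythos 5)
Your proposal is correct and follows exactly the paper's own route: specialize Theorem \ref{ranksOfReductions} to $W_1=\{v\}$, $W_2=\{w\}$ and note that a $1\times 1$ submatrix has rank $1$ precisely when the edge is present. The extra remarks about the rank difference being $0$ or $1$ and the determinant criterion for the nonsingular case are correct elaborations of points the paper leaves implicit.
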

\begin{proof}
 The edge $(v,w)$ is present in $\Gamma_W(G)$ if and only if $\textrm{rank}_{\Gamma_W(G)}(\{v\},\{w\}) = 1$, and otherwise this rank is $0$. The result now follows immediately. 
\end{proof}

From Corollary \ref{rankstellvertices}, we see that the entire adjacency matrix of any graph reduction of $G$ can be obtained from simply knowing ranks of submatrices of the adjacency matrix. If one only considers nonsingular reductions, then it suffices to know determinants of submatrices. This leads us to consider a third abstraction for understanding reductions of simple graphs with loops: the reducibility poset.

\subsection{Path invariance and the reducibility poset}\label{pathInvarianceSection}
One of the most basic facts about graph reductions that comes to light in the algebraic formulation is the following path invariance property.

\begin{theorem}\label{pathInvariance}
Suppose $W_1, W_2$ are two disjoint sets of vertices in $G$. Then $W_2$ is reducible in $\Gamma_{W_1}(G)$ if and only if $W_1 \cup W_2$ is reducible in $G$. In this case,
\begin{equation}
 \Gamma_{W_1 \cup W_2} (G) = \Gamma_{W_2} \circ \Gamma_{W_1} (G).
\end{equation}
\end{theorem}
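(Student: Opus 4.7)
The natural approach is to work in the intrinsic language of Definition \ref{reductionFormDefn}, reducing the theorem to an identity about bilinear forms on $\mathcal{V}$. Write $\mathcal{W}_i = \langle W_i\rangle$ and $\mathcal{F} = \mathcal{E}^{\mathcal{W}_1}$, and observe that since $W_1$ and $W_2$ are disjoint, $\langle W_1 \cup W_2 \rangle = \mathcal{W}_1 + \mathcal{W}_2$. Implicit in the hypothesis that $\Gamma_{W_1}(G)$ exists is that $\mathcal{W}_1$ is $\mathcal{E}$-reducible; I will use this throughout. The heart of the argument is the identity
\begin{equation*}
\mathcal{W}_2^{\perp \mathcal{F}} = \mathcal{W}_1 + (\mathcal{W}_1 + \mathcal{W}_2)^{\perp \mathcal{E}}.
\end{equation*}
Once this is in hand, everything else follows by bookkeeping.

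To verify the identity, the inclusion $\supseteq$ is straightforward: elements of $\mathcal{W}_1$ pair trivially under $\mathcal{F}$ (taking $0$ as representative modulo $\mathcal{W}_1$), while any $v \in (\mathcal{W}_1+\mathcal{W}_2)^{\perp \mathcal{E}}$ lies in $\mathcal{W}_1^{\perp \mathcal{E}}$, represents itself, and satisfies $\mathcal{F}(v,w) = \mathcal{E}(v,w^\ast) = \mathcal{E}(v,w) = 0$ for any $w \in \mathcal{W}_2$ with representative $w^\ast$ (using that $w - w^\ast \in \mathcal{W}_1$). For $\subseteq$: given $v \in \mathcal{W}_2^{\perp \mathcal{F}}$, use reducibility of $\mathcal{W}_1$ to decompose $v = w_1 + v'$ with $w_1 \in \mathcal{W}_1$ and $v' \in \mathcal{W}_1^{\perp \mathcal{E}}$; then for every $w \in \mathcal{W}_2$ the computation $0 = \mathcal{F}(v,w) = \mathcal{E}(v', w^\ast) = \mathcal{E}(v',w)$ shows $v' \in (\mathcal{W}_1+\mathcal{W}_2)^{\perp \mathcal{E}}$, so $v = w_1 + v'$ lies in the right-hand side.

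From this identity the reducibility equivalence is immediate: $\mathcal{W}_2 + \mathcal{W}_2^{\perp \mathcal{F}} = \mathcal{V}$ iff $(\mathcal{W}_1+\mathcal{W}_2) + (\mathcal{W}_1+\mathcal{W}_2)^{\perp \mathcal{E}} = \mathcal{V}$. For the form identity $\mathcal{F}^{\mathcal{W}_2} = \mathcal{E}^{\mathcal{W}_1+\mathcal{W}_2}$, pick a representative $v_i^\ast \in (\mathcal{W}_1+\mathcal{W}_2)^{\perp \mathcal{E}}$ of $v_i$ modulo $\mathcal{W}_1 + \mathcal{W}_2$, then adjust by an element of $\mathcal{W}_1 \subseteq \mathcal{W}_2^{\perp \mathcal{F}}$ to obtain a representative of $v_i$ modulo $\mathcal{W}_2$ inside $\mathcal{W}_2^{\perp \mathcal{F}}$; the $\mathcal{W}_1$-shift does not change the value of $\mathcal{E}(v_1^\ast, v_2^\ast)$, so both sides agree. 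The only remaining subtlety is that $\Gamma_{W_2}\circ\Gamma_{W_1}(G)$ is built from $\mathcal{F}$ restricted to $\langle V \setminus W_1\rangle$ rather than $\mathcal{F}$ on all of $\mathcal{V}$; this is harmless because $\mathcal{W}_2 \subseteq \langle V \setminus W_1\rangle$ and any representative can be shifted by an element of $\mathcal{W}_1$ back into $\langle V \setminus W_1\rangle$ without changing the form value. I expect the annihilator identity to be the only genuinely creative step; the main obstacle is simply the disciplined bookkeeping needed to keep straight whether a given vector is a representative modulo $\mathcal{W}_1$, modulo $\mathcal{W}_2$, or modulo $\mathcal{W}_1 + \mathcal{W}_2$, and to check that the various choices of representative are mutually compatible.
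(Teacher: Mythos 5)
Your proof is correct, but it takes a genuinely different route from the paper's. The paper deduces Theorem \ref{pathInvariance} from the rank formula of Theorem \ref{ranksOfReductions}: it characterizes reducibility of $W$ by the condition $\textrm{rank}_G(W,V) = \textrm{rank}_G(W)$, and then matches edges of $\Gamma_{W_1\cup W_2}(G)$ and $\Gamma_{W_2}\circ\Gamma_{W_1}(G)$ via the submatrix-rank criterion of Corollary \ref{rankstellvertices}. You instead work intrinsically with the bilinear forms of Section \ref{redDefinitions}, and your key lemma --- the annihilator identity $\mathcal{W}_2^{\perp\mathcal{F}} = \mathcal{W}_1 + (\mathcal{W}_1+\mathcal{W}_2)^{\perp\mathcal{E}}$ for $\mathcal{F}=\mathcal{E}^{\mathcal{W}_1}$ --- appears nowhere in the paper. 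This is precisely the ``direct from the definitions'' argument the author acknowledges is possible but declines to write out on the grounds that ``the notation is cumbersome''; your annihilator identity organizes that bookkeeping cleanly, and both directions of your verification check out (the crucial points being that $\mathcal{W}_1$ lies in the radical of $\mathcal{F}$ and that $\mathcal{E}$-reducibility of $\mathcal{W}_1$ supplies the decomposition $v = w_1 + v'$). What each approach buys: yours is self-contained and conceptually transparent --- it never leaves the intrinsic setting and needs no rank computations --- while the paper's route extracts the theorem as a cheap corollary of machinery (Theorem \ref{ranksOfReductions}) that is needed anyway for the nullity results of Section \ref{nullitySection}. Two spots in your write-up deserve one more sentence each: the claim that the $\mathcal{W}_1$-shift ``does not change the value of $\mathcal{E}(v_1^*,v_2^*)$'' is better phrased as saying the shifted vectors $v_i^\dagger = v_i^* + a_i$ still admit $v_i^*$ as representatives modulo $\mathcal{W}_1$ lying in $\mathcal{W}_1^{\perp\mathcal{E}}$, so that $\mathcal{F}(v_1^\dagger,v_2^\dagger)=\mathcal{E}(v_1^*,v_2^*)$ (a direct $\mathcal{E}$-computation would pick up a spurious term $\mathcal{E}(a_1,a_2)$); and the equivalence between reducibility of $\mathcal{W}_2$ computed inside $\langle V\setminus W_1\rangle$ versus inside all of $\mathcal{V}$ should be spelled out, though it follows readily from $\mathcal{W}_1\subseteq\mathcal{W}_2^{\perp\mathcal{F}}$ and $\mathcal{V}=\mathcal{W}_1\oplus\langle V\setminus W_1\rangle$. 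Neither is a gap, only compressed exposition.
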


The proof we give here deduces the theorem easily from Theorem \ref{ranksOfReductions}. However, we point out that it is not difficult to prove the theorem directly from the definitions in Section \ref{redDefinitions}. Indeed, the result is intuitive if one regards graph reduction as ``forgetting'' vertices as described in Section \ref{motivation}, and it is simply necessary to properly formalize this intuition. However, the notation is cumbersome, so we have chosen to give the proof in terms of ranks of submatrices instead.

\begin{proof}
Observe that if $W \subset V$ is a set of vertices of $G$, then $W$ is reducible if and only if $\textrm{rank}_G(W,V) = \rkg{W}$. This follows by writing the adjacency matrix in block form $A = \smat{P}{Q}{Q^T}{R}$ (with $P$ the adjacency matrix of the induced subgraph on $W$) and observing that the column space of $\row{P}{Q}$ is equal to the column space of $P$ if and only if the column space of $Q$ is contained in the column space of $P$, which is true if and only if $W$ is reducible in $G$.

From this, we see that assuming $W_1$ is reducible in $G$, $W_2$ is reducible in $\Gamma_{W_1}(G)$ if and only if $\textrm{rank}_{\Gamma_{W_1}(G)}(W_2,V \backslash W_1) = \textrm{rank}_{\Gamma_{W_1}(G)}(W_2)$. By Theorem \ref{ranksOfReductions}, this is true if and only if $\textrm{rank}_G (W_1 \cup W_2, V) = \textrm{rank}_G (W_1 \cup W_2)$, which is true if and only if $W_1 \cup W_2$ is reducible in $G$. This establishes the first part of the theorem.

Now suppose $W_1$ and $W_1 \cup W_2$ are reducible in $G$, and $v,w \in V \backslash (W_1 \cup W_2)$. By Corollary \ref{rankstellvertices}, $(v,w)$ is an edge in $\Gamma_{W_2} \circ \Gamma_{W_1} (G)$ if and only if $\textrm{rank}_{\Gamma_{W_1}(G)} (W_2 \cup \{v\}, W_2 \cup \{w\}) > \rk{\Gamma_{W_1}(G)}$. But by Theorem \ref{ranksOfReductions}, the left side of this inequality is $\textrm{rank}_G(W_1 \cup W_2 \cup \{v\}, W_1 \cup W_2 \cup \{w\}) - \rkg{W_1}$, while the right side is $\rk{G} - \rkg{W_1}$. Thus $(v,w)$ is an edge in $\Gamma_{W_2} \circ \Gamma_{W_1} (G)$ if and only if $\textrm{rank}_{G}(W_1 \cup W_2 \cup \{v\},W_1 \cup W_2 \cup \{w\}) > \textrm{rank}_{G}(W_1 \cup W_2)$. By Corollary \ref{rankstellvertices}, this is the case if and only if $(v,w)$ is an edge in $\Gamma_{W_1 \cup W_2}(G)$. Thus $\Gamma_{W_1 \cup W_2} (G)$ and $\Gamma_{W_2} \circ \Gamma_{W_1} (G)$ have the same adjacency matrix, and thus they are the same graph.
\end{proof}

Theorem \ref{pathInvariance} demonstrates that the reducible sets of $G$ determine the reducible sets of the results of every graph reduction of $G$. In fact, if we consider the class of all graphs that can be obtained by a graph reduction from $G$, then the graph reductions from one such graph to another are in bijection with inclusions of reducible vertex sets in $G$. To be precise, if $W_1, W_2$ are two reducible vertex sets such that $W_1 \subset W_2$, then there is a unique graph reduction taking $\Gamma_{W_1}(G)$ to $\Gamma_{W_2}(G)$.

We also point out that we can express Theorem \ref{pathInvariance} in a category-theoretic way: it shows that we can define a category whose objects are simple graphs with loops, and whose morphisms are graph reductions. The fact that the composition of two graph reductions is a graph reduction makes compositions of morphisms in this category well-defined. The full subcategory of a given graph $G$ and the results of all graph reductions of $G$ is thus equivalent to the poset of reducible subsets of the vertices of $V$ (i.e. the category whose objects are these subsets, and whose morphisms are inclusion maps). Thus the category of reducible vertex sets is an important invariant of a graph.

\begin{definition}
 The \emph{reducibility poset} $\cR(G)$ of a graph $G$ is the collection of reducible subsets of vertices $S \subset V$. The \emph{reducibility poset at level $n$}, $\cR_n(G)$, is the collection of reducible subsets $S \subset V$ with nullity $n$ in $G$. The subsets $\cR_n(G) \subset \cR(G)$ will be called the \emph{levels} of the reducibility poset. The first level $\cR_0(G)$ will be called the \emph{pivotal poset} of $G$.
\end{definition}

\begin{example}
The graph from Figure \ref{changeBasisExample} has reducibility poset shown on the left of Figure \ref{fig:posets}, on page \pageref{fig:posets}.
\end{example}

Observe that if $G$ is nonsingular, then $\cR(G) = \cR_0(G)$. Also observe that all inclusions within the reducibility poset always go to higher levels. This can be proved using the results of the previous section, but it will also follow from the fact that the nullity of a reducible vertex set is precisely the number of times the negative rule must be used in a combinatorial reduction strategy removing those vertices, which we shall establish in Theorem \ref{ngrNullity}.

The primary reason that the reducibility poset is of interest is that it is possible to reconstruct the graph from the reducibility poset along with its levels; in fact, the pivotal poset suffices.

\begin{theorem}\label{posetDeterminesGraph}
 A graph $G$ is uniquely determined by $V$ and $\cR_0(G)$.
\end{theorem}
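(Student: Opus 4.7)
The plan is to reconstruct the adjacency matrix $A$ of $G$ entry by entry from $V$ together with $\mathcal{R}_0(G)$, using only the membership of singletons and pairs of vertices in $\mathcal{R}_0(G)$. The starting point is the observation (made in Example \ref{minimalExample}) that whenever the induced submatrix $P = A_{W,W}$ is invertible over $\mathbf{F}_2$, the set $W$ is automatically reducible; and conversely, a subset $W$ has nullity zero precisely when $A_{W,W}$ is nonsingular. Hence $\mathcal{R}_0(G)$ is exactly the collection of those $W \subset V$ for which $A_{W,W}$ is invertible over $\mathbf{F}_2$, and the task reduces to recovering $A$ from the knowledge of which principal submatrices are nonsingular.

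First I would recover the diagonal of $A$, i.e., the loop structure. For a single vertex $v$, the submatrix on $\{v\}$ is just $[A_{vv}]$, which is invertible iff $A_{vv} = 1$. So $\{v\} \in \mathcal{R}_0(G)$ iff $v$ carries a loop, and looking at the singletons in $\mathcal{R}_0(G)$ immediately exposes every diagonal entry of $A$.

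Next I would recover the off-diagonal entries. For distinct $v, w$, the principal submatrix on $\{v,w\}$ is $\smat{A_{vv}}{A_{vw}}{A_{vw}}{A_{ww}}$, whose determinant over $\mathbf{F}_2$ equals $A_{vv}A_{ww} + A_{vw}$. Thus $\{v,w\} \in \mathcal{R}_0(G)$ iff $A_{vw} = 1 + A_{vv}A_{ww}$. Since the first step already fixed $A_{vv}$ and $A_{ww}$, this equation determines $A_{vw}$ from the single bit of information ``$\{v,w\} \in \mathcal{R}_0(G)$.'' Explicitly: if both $v$ and $w$ have loops, then $\{v,w\} \in \mathcal{R}_0(G)$ iff there is no edge $vw$; if at most one of $v,w$ has a loop, then $\{v,w\} \in \mathcal{R}_0(G)$ iff the edge $vw$ is present.

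Since every entry of $A$ is pinned down by the data $(V, \mathcal{R}_0(G))$, the graph $G$ itself is determined. I do not anticipate a real obstacle here: the content of the result is that $2 \times 2$ principal minors of a symmetric $\mathbf{F}_2$-matrix are, once the diagonal is known, sufficient to read off every off-diagonal entry; all of the higher-rank information in $\mathcal{R}_0(G)$ is not needed for reconstruction, and the argument is essentially this two-step case analysis.
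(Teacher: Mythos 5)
Your proposal is correct and follows essentially the same argument as the paper: read off the diagonal from the singletons in $\cR_0(G)$, then recover each off-diagonal entry from the $2\times 2$ principal minor condition $ac - b^2 = 1$ using $b^2 = b$ over $\textbf{F}_2$. Your preliminary remark identifying $\cR_0(G)$ with the collection of vertex sets whose induced adjacency matrix is invertible is a worthwhile explicit step that the paper leaves implicit.
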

\begin{proof}
 Observe that for any $v \in V$, $v$ has a loop if and only if $\{v\} \in \cR_0(G)$. Thus the diagonal of the adjacency matrix can be recovered from the pivotal poset. Now for any $v,w \in V$, $v \neq w$, if the induced subgraph on vertices $\{v,w\}$ is $\smat{a}{b}{b}{c}$, then $\{v,w\} \in \cR_0(G)$ if and only if $ac-b^2 = 1$. Since $a$ and $c$ are diagonal entries, and $b^2 = b$ in $\textbf{F}_2$, $b$ can also be recovered from the pivotal poset. Thus the entire adjacency matrix can be obtained in this way.
\end{proof}

\begin{definition}
 A pair $(\cR_0, V)$, where $V$ is a finite set and $\cR_0$ is a collection of subsets of $V$, is called \emph{realizable} if there exists a graph $G$ on vertices $V$ with pivotal poset $\cR_0$.
\end{definition}

In principle, we can study all graph reductions by studying pivotal posets. However, this requires classifying those posets which can occur as pivotal posets. The pivot operation provides some intriguing results in this direction; we will take up that subject in Section \ref{pivots}.

\section{Combinatorial rules as minimal reductions}\label{combMinimal}
We demonstrate in this section that the general graph reductions we defined in Section \ref{algebraic} do in fact generalize the combinatorial graph reduction rules defined in Section \ref{combinatorial}, and show that the combinatorial reduction rules can be characterized simply as minimal graph reductions. We shall then give a combinatorial interpretation to the nullity of a vertex set in a graph, which will resolve two questions posed by Harju, Li, and Petre \cite{harju} about the combinatorial reduction rules. First we show that the combinatorial reduction rules are, in fact, graph reductions as defined in Section \ref{algebraic}.

\begin{lemma}
 Suppose that $W$ is the domain of a combinatorial reduction rule $\gamma$ on $G$ (i.e. $W$ is a single vertex with a loop, two adjacent vertices without loops, or an isolated vertex without a loop). Then $W$ is reducible, and $\Gamma_W$ is the same as $\gamma$, in the sense that $\gamma(G) = \Gamma_W (G)$.
\end{lemma}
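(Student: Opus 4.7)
The plan is to verify the lemma by a case analysis on the three types of combinatorial rule, and in each case compare the adjacency matrix produced by the abstract graph reduction $\Gamma_W$ (as computed by Proposition \ref{matrixGamma}) with the explicit matrix formulas (\ref{combMatrix1})--(\ref{combMatrix3}) already known for $\gpr$, $\gdr$, $\gnr$. Order the vertices of $G$ so that those in $W$ come first, and write the adjacency matrix in block form $A = \smat{P}{Q}{Q^T}{R}$.

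For the positive rule, $W = \{v\}$ with $v$ a vertex carrying a loop, so $P = [1]$; for the double rule, $W = \{v_1,v_2\}$ with $v_1,v_2$ adjacent and loopless, so $P = \smat{0}{1}{1}{0}$. In both cases $P$ is invertible over $\textbf{F}_2$, hence the image of $Q$ is trivially contained in the image of $P$, so by Proposition \ref{matrixReducibility} the set $W$ is reducible, with the canonical choice $M = P^{-1}Q$. Proposition \ref{matrixGamma} then gives the adjacency matrix of $\Gamma_W(G)$ as $R - Q^T P^{-1} Q$. Specializing $P$ in each case recovers exactly the right-hand sides of (\ref{combMatrix1}) and (\ref{combMatrix2}), so $\Gamma_W(G) = \gamma(G)$ in each case.

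For the negative rule, $W = \{v\}$ with $v$ an isolated loopless vertex, so $P = [0]$ and $Q = \mathbf{0}$. The reducibility condition $\operatorname{im} Q \subseteq \operatorname{im} P$ holds vacuously, and any matrix $M$ satisfies $PM = Q = 0$; taking $M = 0$ Proposition \ref{matrixGamma} yields adjacency matrix $R - M^T P M = R$, matching (\ref{combMatrix3}).

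The only genuine content to check is that the matrix update $R - Q^T P^{-1} Q$ for the positive rule correctly encodes not merely the edge complementations among $N_G(v)$ but also the sign inversions there, under the loop-for-sign dictionary. This amounts to noting that the diagonal entries of $Q^T Q$ satisfy $(Q^T Q)_{ii} = Q_i^2 = Q_i$ in $\textbf{F}_2$, which equals $1$ precisely when $v_i \in N_G(v)$, so the diagonal of $R$ is flipped exactly on $N_G(v)$; the off-diagonal entry $(Q^T Q)_{ij} = Q_i Q_j$ is $1$ iff both $v_i$ and $v_j$ lie in $N_G(v)$, giving the edge complementation. Analogous bookkeeping for the $2 \times 2$ block in the double rule case is equally routine and is the only mildly delicate part of the argument; since signs are unaffected by $\gdr$ and the diagonal entries of $Q^T \smat{0}{1}{1}{0} Q$ vanish in $\textbf{F}_2$ when the relevant neighborhoods are compatible, this matches the definition of $\gdr$. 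With these verifications the three cases conclude the proof.
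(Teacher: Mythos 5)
Your proof is correct and takes the same route as the paper, which disposes of the lemma in one line by comparing formulas (\ref{combMatrix1})--(\ref{combMatrix3}) with Proposition \ref{matrixGamma}; you have simply carried out that comparison explicitly, including the reducibility check via Proposition \ref{matrixReducibility} and the extra (harmless, and in fact welcome) verification that the matrix formulas faithfully encode the combinatorial definitions of the rules. One tiny wording quibble: the diagonal entries of $Q^T\smat{0}{1}{1}{0}Q$ vanish unconditionally over $\textbf{F}_2$ (each equals $2Q_{1i}Q_{2i}=0$), not only ``when the relevant neighborhoods are compatible.''
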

\begin{proof}
 This follows by comparing formulas \ref{combMatrix1}, \ref{combMatrix2}, and \ref{combMatrix3} to Proposition \ref{matrixGamma}.
\end{proof}

We now demonstrate that the combinatorial reduction rules in fact arise naturally from the notion of graph reduction, in the sense of the following theorem.

\begin{definition}
 A nonempty subset $W$ of vertices is \emph{minimally reducible} if it is reducible, and none of its nonempty subsets are reducible. A \emph{minimal graph reduction} is a reduction $\Gamma_W$ such that $W$ is minimally reducible.
\end{definition}

\begin{theorem}
 The minimal reductions of a graph $G$ are precisely the applicable combinatorial reduction rules, $\emph{gpr}, \emph{gdr}$ and $\emph{gnr}$.
\end{theorem}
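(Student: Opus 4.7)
The plan is to prove the theorem in two directions: first, that each applicable combinatorial rule has a minimally reducible domain; second, that every minimally reducible vertex set is in fact the domain of some applicable combinatorial rule.

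For the first direction, the domains of $\gpr_v$ and $\gnr_v$ are singletons, hence trivially minimal, and the previous lemma tells us they are reducible. For $\gdr_{v_1,v_2}$, the domain $W = \{v_1,v_2\}$ consists of two adjacent negative vertices, so the principal submatrix is $P = \smat{0}{1}{1}{0}$, which is invertible; hence $W$ is reducible by Example \ref{minimalExample}. The only nonempty proper subsets are the singletons $\{v_i\}$; each $v_i$ is a negative vertex that is not isolated in $G$ (being adjacent to the other), so by Example \ref{minimalExample} neither singleton is reducible, which gives minimality.

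For the converse, suppose $W$ is minimally reducible, and let $P$ be the principal submatrix of the adjacency matrix on $W$. If some $v \in W$ has a loop, then the singleton $\{v\}$ has principal submatrix $[1]$, which is invertible, so $\{v\}$ is reducible; by minimality $W = \{v\}$, the domain of $\gpr_v$. Otherwise every vertex of $W$ is negative, so $P$ has zero diagonal. If $|W|=1$, then $P = [0]$ and Proposition \ref{matrixReducibility} forces the off-diagonal block $Q$ to be zero, i.e.\ $v$ is isolated in $G$; this is the domain of $\gnr_v$.

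It remains to rule out an all-negative $W$ with $|W| \geq 2$. If the induced subgraph on $W$ contains any edge $(v_1,v_2)$, then $\{v_1,v_2\}$ is reducible by the invertibility argument above, so minimality forces $W = \{v_1,v_2\}$, which is the domain of $\gdr_{v_1,v_2}$. If instead $W$ has no internal edges, then $P = 0$, and Proposition \ref{matrixReducibility} forces $Q = 0$; but then every vertex in $W$ is isolated in $G$, so every singleton subset of $W$ is reducible by the $\gnr$ criterion, contradicting minimality since $|W| \geq 2$. The main subtlety is simply ensuring the case analysis on $P$ is exhaustive; once the loop/no-loop dichotomy is set up and Proposition \ref{matrixReducibility} is applied to pass between reducibility of $W$ and reducibility of its singleton subsets, the argument falls out cleanly.
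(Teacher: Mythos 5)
Your proof is correct and follows essentially the same route as the paper's: both directions rest on the same case analysis (a looped vertex gives an applicable $\gpr$, an internal edge between negative vertices gives an applicable $\gdr$, and otherwise Proposition \ref{matrixReducibility} forces $Q=0$ so the vertices are isolated and $\gnr$ applies), with minimality then pinning $W$ down to the rule's domain. The only cosmetic difference is that the paper phrases the converse as ``every reducible set contains a rule domain as a reducible subset,'' while you argue directly on a minimal $W$; the content is identical.
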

\begin{proof}
 It is easy to see that a combinatorial reduction rule is in fact minimal: the domains of $\textrm{gpr}$ and $\textrm{gnr}$ have no nonempty subsets, and both nonempty subsets of the domain of $\textrm{gdr}$ are not reducible. Thus it remains to show that if $W$ is any reducible vertex set in $G$, $W$ has a reducible subset that is the domain of a combinatorial reduction rule. If $W$ contains any positive vertices (i.e. vertices with loops), then any of these vertices is a reducible set by itself. If $W$ contains only negative vertices, and there is at least one edge between vertices of $W$, then the vertices of this edge are the domain of an applicable double rule. The only remaining case is if $W$ contains only negative vertices, and there are no edges among the vertices of $W$. Then $G$ has adjacency matrix $\smat{\textbf{0}}{Q}{Q^T}{R}$ in block form (with rows and columns corresponding to $W$ coming first). Since $W$ is reducible, the row space of $Q$ is contained in the row space of $\textbf{0}$, thus $Q = 0$, and all the vertices of $W$ are isolated, thus any one of them is the domain of an applicable negative rule. Thus in all cases, there is a reducible subset of $W$ that is the domain of an applicable combinatorial reduction rule. 
\end{proof}

\begin{corollary}
 Every graph reduction is a composition of combinatorial reduction rules.
\end{corollary}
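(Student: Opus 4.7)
The plan is to induct on $|W|$, using the preceding theorem to extract one combinatorial reduction rule at a time, and using path invariance (Theorem \ref{pathInvariance}) to justify that the remainder is again a graph reduction.

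First I set up the induction. The base case $|W| = 0$ is trivial: $\Gamma_\emptyset$ is the identity, which I regard as the empty composition of combinatorial reduction rules. Now assume $W$ is a nonempty reducible vertex set in $G$ and the corollary holds for all reducible vertex sets of strictly smaller cardinality in any graph.

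By the proof of the preceding theorem, $W$ contains a nonempty subset $W' \subseteq W$ such that $W'$ is the domain of an applicable combinatorial reduction rule on $G$; in particular $W'$ is reducible in $G$. Let $W'' = W \setminus W'$. Then $W'$ and $W''$ are disjoint, and $W = W' \cup W''$ is reducible in $G$ by hypothesis. Theorem \ref{pathInvariance} then guarantees that $W''$ is reducible in $\Gamma_{W'}(G)$ and that
\begin{equation*}
\Gamma_W(G) \;=\; \Gamma_{W''} \circ \Gamma_{W'}(G).
\end{equation*}
Since $W'$ is nonempty, $|W''| < |W|$, so the inductive hypothesis applies to $W''$ in the graph $\Gamma_{W'}(G)$: there is a sequence of applicable combinatorial reduction rules $\gamma_1, \ldots, \gamma_k$ on $\Gamma_{W'}(G)$ whose composition equals $\Gamma_{W''}$ on that graph. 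Writing $\gamma_0$ for the combinatorial reduction rule corresponding to $W'$ on $G$ (which equals $\Gamma_{W'}$ by the preceding lemma), we conclude that
\begin{equation*}
\Gamma_W(G) \;=\; \gamma_k \circ \cdots \circ \gamma_1 \circ \gamma_0 \,(G),
\end{equation*}
exhibiting $\Gamma_W$ on $G$ as a composition of combinatorial reduction rules, which completes the induction.

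There is no substantive obstacle here: the preceding theorem supplies a minimal piece of any reducible set, and path invariance guarantees that peeling off that minimal piece leaves a reducible set in the reduced graph. The only mild point to verify is that the applicability of the combinatorial rule $\gamma_0 = \Gamma_{W'}$ on $G$, and of the rules $\gamma_i$ on the successive reductions of $\Gamma_{W'}(G)$, is automatic from the fact that each corresponds to a reducible subset by the preceding theorem; no additional checking of signs or edges is required.
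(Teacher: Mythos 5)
Your proof is correct and is exactly the induction on $|W|$ that the paper intends (its own proof is just the one-line remark ``This follows by induction on the size of $W$''): you extract an applicable combinatorial rule from the preceding theorem's proof and use Theorem \ref{pathInvariance} to reduce to a smaller set. No further comment is needed.
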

\begin{proof}
 This follows by induction on the size of $W$.
\end{proof}

\begin{corollary}
 Combinatorial reduction strategies are path invariant, in the sense that any two strategies that remove the same set of vertices result in the same graph.
\end{corollary}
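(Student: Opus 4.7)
The plan is to reduce the corollary to an inductive application of Theorem \ref{pathInvariance}, using the identification between combinatorial rules and minimal graph reductions established in the preceding theorem. The key observation is that once every combinatorial rule is expressed as $\Gamma_U$ for some minimally reducible $U$, the composition of a whole strategy collapses to a single $\Gamma_W$, where $W$ is the strategy's domain.

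More concretely, I would proceed as follows. Let $(\gamma_1, \gamma_2, \dots, \gamma_n)$ be an applicable combinatorial reduction strategy on $G$, and let $U_i$ denote the domain of $\gamma_i$. By the preceding lemma and theorem, $\gamma_i = \Gamma_{U_i}$ as applied to the intermediate graph $G_{i-1} = (\gamma_{i-1} \circ \cdots \circ \gamma_1)(G)$. I would then show by induction on $i$ that $U_1 \cup \cdots \cup U_i$ is reducible in $G$ and that
\begin{equation*}
\gamma_i \circ \cdots \circ \gamma_1(G) \;=\; \Gamma_{U_1 \cup \cdots \cup U_i}(G).
\end{equation*}
The base case $i = 1$ is immediate. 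For the inductive step, assume the claim for $i - 1$; then $U_i$ is reducible in $\Gamma_{U_1 \cup \cdots \cup U_{i-1}}(G)$, since this graph equals $G_{i-1}$, where $\gamma_i$ applies. The forward direction of Theorem \ref{pathInvariance} then implies $U_1 \cup \cdots \cup U_i$ is reducible in $G$, and the equality-of-reductions half gives
\begin{equation*}
\Gamma_{U_i} \circ \Gamma_{U_1 \cup \cdots \cup U_{i-1}}(G) = \Gamma_{U_1 \cup \cdots \cup U_i}(G),
\end{equation*}
closing the induction.

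Applying this with $i = n$, the final graph produced by the strategy equals $\Gamma_W(G)$, where $W = U_1 \cup \cdots \cup U_n$ is precisely the domain of the strategy. Since $\Gamma_W(G)$ depends only on $W$ and not on the specific sequence of $U_i$'s, any two applicable combinatorial strategies with the same domain yield the same graph.

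There is essentially no serious obstacle: all the heavy lifting has been done in Theorem \ref{pathInvariance} and in the identification of combinatorial rules as minimal graph reductions. The only mild care required is to ensure, at each inductive step, that $U_i$ being ``reducible in the intermediate graph $G_{i-1}$'' is the right hypothesis to feed into Theorem \ref{pathInvariance}; this is precisely guaranteed by applicability of $\gamma_i$ combined with the fact that $G_{i-1} = \Gamma_{U_1 \cup \cdots \cup U_{i-1}}(G)$ by the inductive hypothesis.
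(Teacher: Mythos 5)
Your proof is correct and is essentially the paper's argument spelled out in full: the paper's one-line proof ("both are equivalent to $\Gamma_W$, by Theorem \ref{pathInvariance}") implicitly relies on exactly the induction you make explicit, identifying each combinatorial rule with a graph reduction $\Gamma_{U_i}$ and collapsing the composition via Theorem \ref{pathInvariance}. No gaps; you have simply supplied the details the paper leaves to the reader.
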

\begin{proof}
 If both strategies remove vertex set $W$, then both are equivalent to $\Gamma_W$, by Theorem \ref{pathInvariance}.
\end{proof}

\begin{example}
Figure \ref{fig:pathInvarianceExample} shows an example of two different ways to factor a graph reduction into combinatorial reduction rules, and also illustrates the path-invariance property of combinatorial reduction rules.
\end{example}

\begin{figure}[p]
\begin{center}
\begin{tabular}{c||c}
\setlength{\tabcolsep}{0.5cm}
$G$ & $G$\\
\includegraphics[scale=0.3]{example1.eps} & \includegraphics[scale=0.3]{example1.eps}\\
\\ \hline \\
$\gdr_{v_1,v_2}(G)$ & $\gpr_{v_3}(G)$\\
\includegraphics[scale=0.3]{example1a.eps} & \includegraphics[scale=0.3]{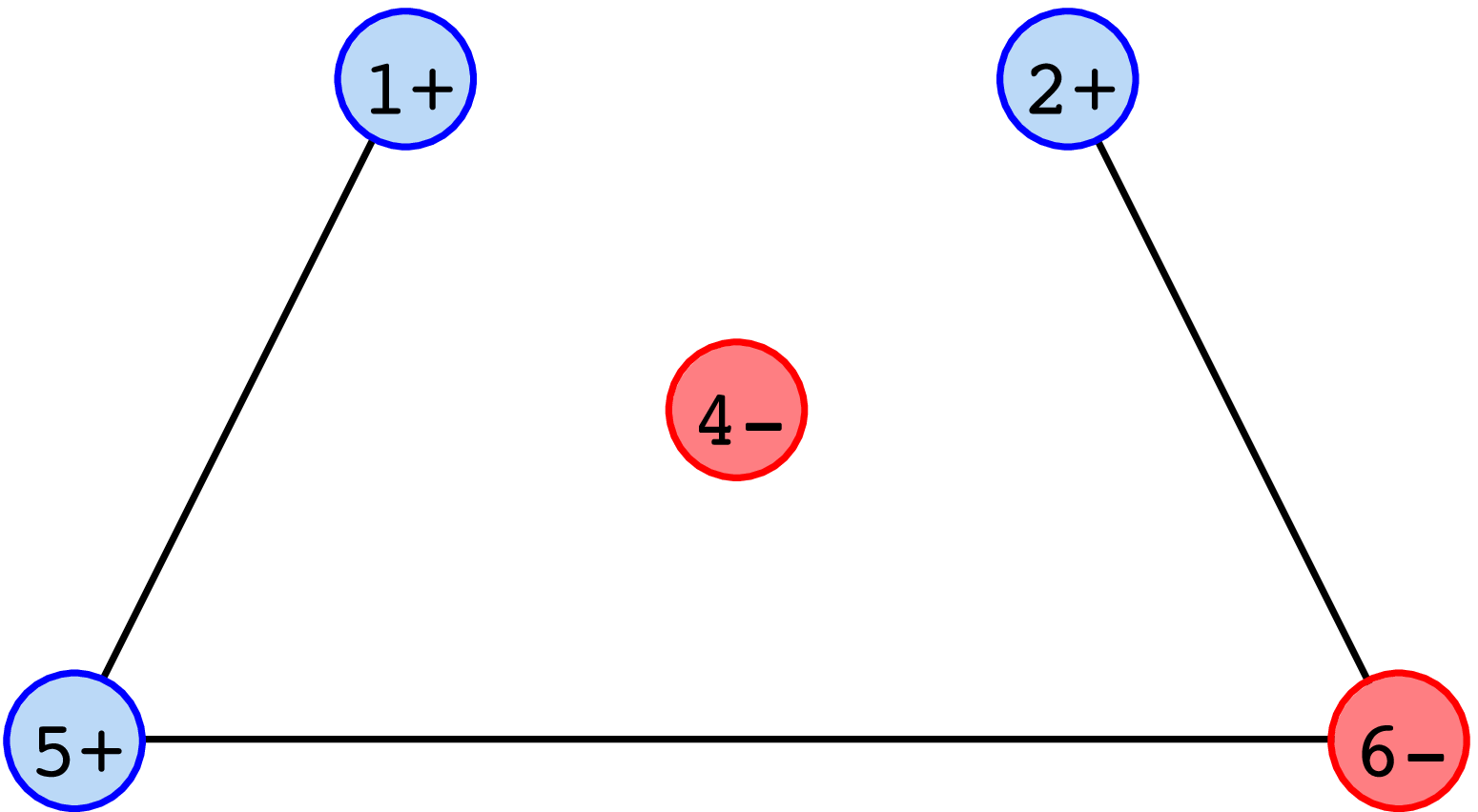}\\
\\ \hline \\
$\gpr_{v_3} \circ \gdr_{v_1,v_2}(G)$ & $\gpr_{v_1} \circ \gpr_{v_3}(G)$\\
\includegraphics[scale=0.3]{example1b.eps} & \includegraphics[scale=0.3]{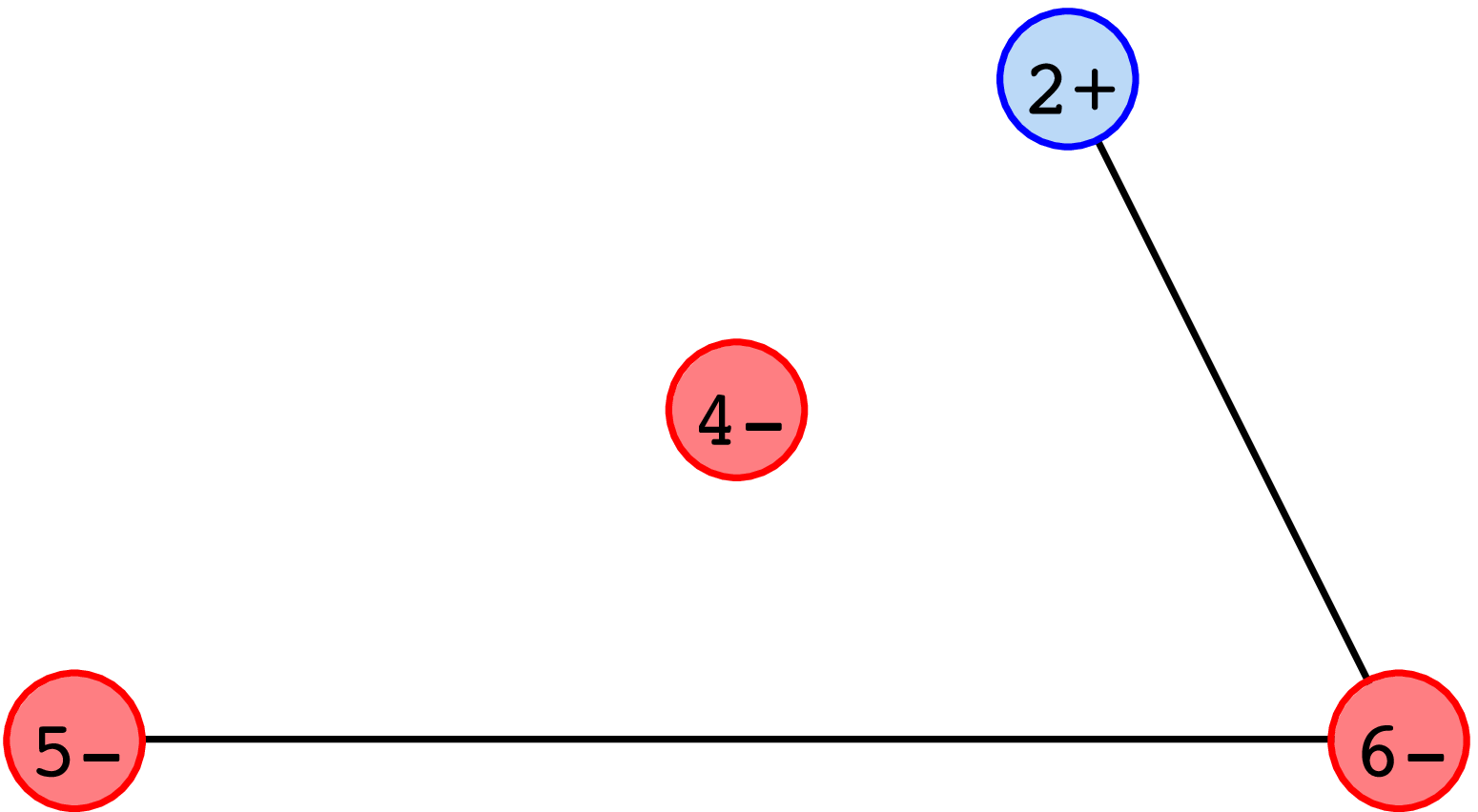}\\
\\ \hline \\
$\gnr_{v_4} \circ \gpr_{v_3} \circ \gdr_{v_1,v_2}(G)$ & $\gpr_{v_2} \circ \gpr_{v_1} \circ \gpr_{v_3}(G)$\\
\includegraphics[scale=0.3]{example1c.eps} & \includegraphics[scale=0.3]{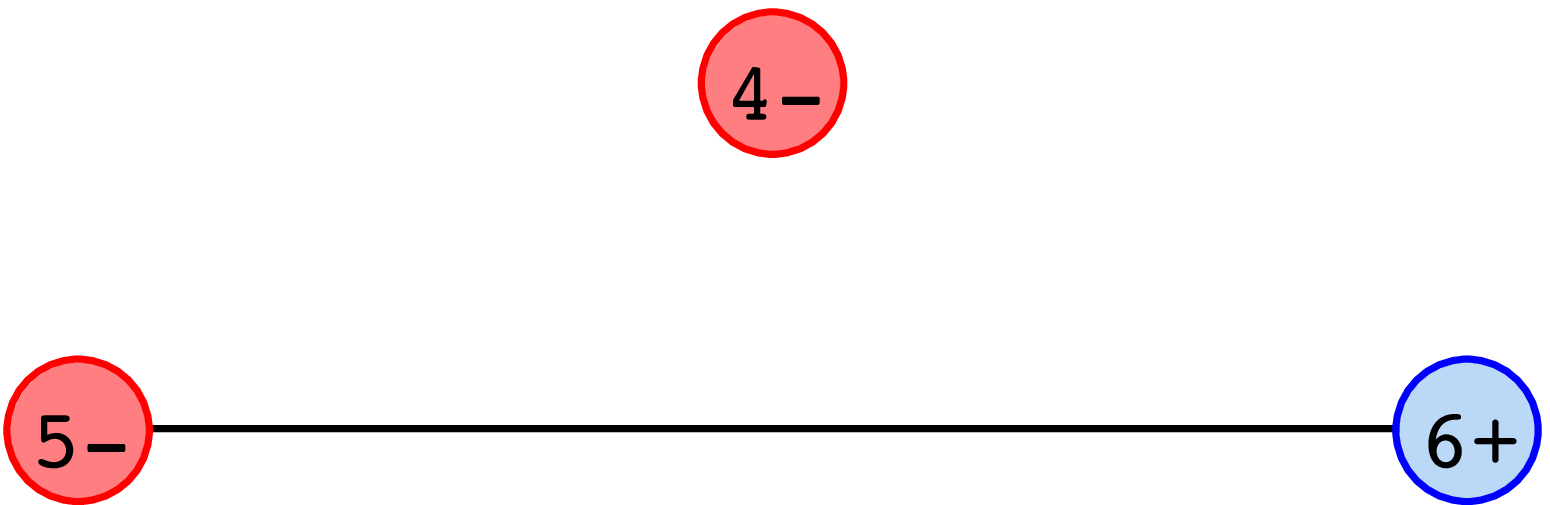}\\
\\ \hline \\
 & $\gnr_{v_4} \circ \gpr_{v_2} \circ \gpr_{v_1} \circ \gpr_{v_3}(G)$\\
& \includegraphics[scale=0.3]{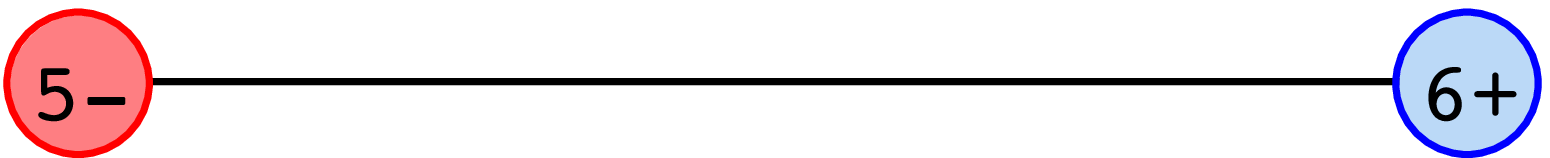}
\end{tabular}
\end{center}
\caption{Two combinatorial reduction strategies removing the same vertices.}\label{pathInvarianceExample}
\label{fig:pathInvarianceExample}
\end{figure}

Among the three combinatorial reduction operations, the negative rule is the only one which is singular (in the sense that it removes a vertex set whose induced subgraph has a singular adjacency matrix). This gives it some special properties, which we shall now study. 

\subsection{Nullity and the negative rule}\label{nullitySection}
Harju et al.\ \cite{harju} ask whether it is true that every combinatorial reduction strategy of a given signed graph applies the negative rule the same number of times, and also whether there is a characterization of those signed graphs that avoid the negative rule in all reductions. We are now able to answer both questions.

\begin{theorem} \label{ngrNullity}
 If $W$ is a reducible vertex set in $G$, then the nullity of $W$ in $G$ is equal to the number of times the negative rule is applied in any combinatorial reduction strategy removing the vertices of $W$.
\end{theorem}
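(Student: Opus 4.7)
The plan is to show that the nullity of a reducible vertex set behaves additively under composition of graph reductions, and then observe that each combinatorial rule contributes exactly its nullity $\in \{0,1\}$ to the count, where the only contributor of $1$ is $\gnr$.

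The first step is to establish the following additivity lemma: if $W_1$ is reducible in $G$ and $W_2 \subseteq V\setminus W_1$ is reducible in $\Gamma_{W_1}(G)$, then
\begin{equation*}
\rkg{W_1 \cup W_2} = \rkg{W_1} + \textrm{rank}_{\Gamma_{W_1}(G)}(W_2).
\end{equation*}
The proof is a three-line computation: applying Corollary \ref{rankCorollary} to $G$ along $W_1 \cup W_2$, using Theorem \ref{pathInvariance} to rewrite $\Gamma_{W_1\cup W_2}(G)$ as $\Gamma_{W_2}(\Gamma_{W_1}(G))$, and then applying Corollary \ref{rankCorollary} again inside $\Gamma_{W_1}(G)$. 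Since $W_1$ and $W_2$ are disjoint, $|W_1 \cup W_2| = |W_1| + |W_2|$, and subtracting gives the corresponding additivity statement for nullities:
\begin{equation*}
\textrm{nullity}_G(W_1 \cup W_2) = \textrm{nullity}_G(W_1) + \textrm{nullity}_{\Gamma_{W_1}(G)}(W_2).
\end{equation*}

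The second step is to compute the nullities of the three combinatorial rule domains directly from the matrix descriptions (\ref{combMatrix1})--(\ref{combMatrix3}): the domain of $\gpr_v$ has adjacency matrix $[1]$ of rank $1$ (nullity $0$); the domain of $\gdr_{v_1,v_2}$ has adjacency matrix $\smat{0}{1}{1}{0}$ of rank $2$ (nullity $0$); the domain of $\gnr_v$ has adjacency matrix $[0]$ of rank $0$ (nullity $1$).

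The third step is a routine induction on the length of the reduction strategy. Let $(\gamma_1,\dots,\gamma_k)$ be an applicable combinatorial strategy whose overall domain is $W$, and let $U_i$ denote the domain of $\gamma_i$. Write $W^{(i)} = U_1 \cup \dots \cup U_i$. Applying the additivity lemma $k-1$ times, together with path invariance to identify each intermediate graph $\Gamma_{W^{(i)}}(G)$ with the graph obtained by performing $\gamma_1,\dots,\gamma_i$, yields
\begin{equation*}
\textrm{nullity}_G(W) \;=\; \sum_{i=1}^{k} \textrm{nullity}_{\Gamma_{W^{(i-1)}}(G)}(U_i).
\end{equation*}
By the second step, each summand on the right is $1$ if $\gamma_i = \gnr$ and $0$ otherwise, so the sum is exactly the number of applications of the negative rule in the strategy. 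Since the left side depends only on $G$ and $W$, this count is independent of the strategy.

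I do not expect a serious obstacle here; the work is essentially to set up the additivity cleanly. The one place where care is needed is making sure the additivity lemma is applied at the right intermediate graph, which is exactly where path invariance (Theorem \ref{pathInvariance}) is needed to guarantee that $W^{(i)}$ is reducible in $G$ whenever the first $i$ combinatorial rules apply successively.
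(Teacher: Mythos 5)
Your proposal is correct and follows essentially the same route as the paper: both arguments rest on Corollary \ref{rankCorollary} (which you repackage, via Theorem \ref{pathInvariance}, as additivity of nullity over a composite reduction, while the paper states it as ``each reduction $\Gamma_W$ drops the nullity of the graph by the nullity of $W$''), the observation that the rule domains have nullity $1$ for $\gnr$ and $0$ for $\gpr$ and $\gdr$, and induction on the length of the strategy. Your write-up merely makes explicit the bookkeeping that the paper leaves implicit.
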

\begin{proof}
 By Corollary \ref{rankCorollary} and the definition of nullity, a reduction $\Gamma_W$ reduces the nullity of $G$ by exactly the nullity of the vertex set $W$. If $W$ is a single negative vertex, then $W$ has nullity $1$, and if $W$ is a positive vertex or a pair of adjacent negative vertices, then $W$ has nullity $0$. The result now follows by induction on the number of reduction rules in the combinatorial reduction strategy.
\end{proof}
\begin{corollary}
 Any combinatorial reduction strategy removing the vertices $W$ uses the same number of negative rules.
\end{corollary}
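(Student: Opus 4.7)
The plan is to invoke Theorem \ref{ngrNullity} directly. Both combinatorial reduction strategies under consideration remove the same vertex set $W$, and Theorem \ref{ngrNullity} asserts that the number of negative rules applied in \emph{any} such strategy equals $|W| - \rkg{W}$, the nullity of $W$ in $G$. Since this quantity depends only on $W$ and the initial graph $G$, it is the same for both strategies, which is precisely the claim.

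More concretely, I would write: let $\sigma_1$ and $\sigma_2$ be two combinatorial reduction strategies with domain $W$. By Theorem \ref{ngrNullity}, each applies the negative rule exactly $|W| - \rkg{W}$ times, so they apply it the same number of times. No further induction or case analysis is needed, because Theorem \ref{ngrNullity} already handled the induction on the length of a strategy.

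There is no real obstacle here; the content is entirely in Theorem \ref{ngrNullity}, whose proof in turn relied on Corollary \ref{rankCorollary} (each reduction step lowers the ambient nullity by exactly the nullity of the vertex set it removes) together with the observation that among the three combinatorial rules only $\gnr$ removes a vertex set of positive nullity. The corollary is simply extracting the ``path-independence of the negative-rule count'' as a headline consequence.
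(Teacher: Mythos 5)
Your proof is correct and matches the paper's (implicit) argument exactly: the corollary is an immediate consequence of Theorem \ref{ngrNullity}, since the negative-rule count of any strategy removing $W$ equals the nullity of $W$ in $G$, a quantity independent of the strategy. The paper states the corollary without further proof for precisely this reason.
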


Recall that all set inclusions in the reducibility poset $\cR(G)$ correspond to reductions of graphs obtainable from $G$ by graph reduction. Theorem \ref{ngrNullity} now shows that for all such inclusions $S \subset T$, where $S \in \cR_m(G)$ and $T \in \cR_n(G)$, $m \leq n$ and the number of negative rules used in any combinatorial reduction strategy realizing this reduction is $n-m$.

Note that we can also give a combinatorial characterization of those signed graphs that avoid $\gnr$. The statement that $A$ is nonsingular means that it has trivial nullspace. Since any vector in $\textbf{F}_2^n$ can be interpreted as a subset of the vertices of the graph, we could state the singularity of $A$ as follows: \textit{$G$ avoids $\emph{gnr}$ if and only if there is no nonempty subset $S$ of $V$ such that each vertex $v \in V$ is adjacent to an even number of vertices in $w \in S$}.

Ehrenfeucht et al.\ \cite{ehrenfeucht} give a similar result for the string formulation of gene assembly. They characterize strings which avoid the string negative rule in all of their reduction as those which have no cycles (see their paper for definitions). Our characterization is related in the sense that cycles in strings correspond to elements of the null space of the adjacency matrix.

If we restrict ourselves to negative graphs (graphs without positive vertices), which can be understood as ordinary undirected graphs with $0$ on the diagonal of their adjacency matrices, this theorem gives a combinatorial interpretation of the binary rank of a graph. This characterization is identical to one given by Godsil and Royle \cite{ggt}, who consider \textit{rank-two reductions} that are identical to $\gdr$.

\section{Pivots and retrographs}\label{pivots}
We now consider the relation between graph reductions as defined above and the pivot operation on simple graphs with loops. The pivot operation is a combinatorial operation on graphs which does not remove any vertices and preserves the binary rank of the graph. The pivot of a matrix is defined by Geelen \cite{geelen}, who considers pivots of matrices over both $\textbf{F}_2$ and $\textbf{R}$. The properties we study here are closely related to results found by Brijder, Harju, and Hoogeboom \cite{brijder}, who also observe the connection between pivots and nonsingular reductions of signed graphs.

In this section, we demonstrate that pivot operations are easy to understand in terms of the pivotal poset of a graph, and use this to derive the relation between pivots and graph reductions. We also consider a special case of the pivot operation, which we call the retrograph of a graph, which is defined for nonsingular graphs and has an interesting combinatorial relation to the original graph.

As has previously been observed in \cite{brijder}, pivots are well-suited to studying nonsingular reductions. It is more difficult to understand their effect on the reducibility poset and graph reductions in general; we shall illustrate this point with an example at the end of the section.

We observe that when pivots are considered for matrices over fields of characteristic other than $2$, slight modifications are needed. In effect, the notation is simplified by the fact that addition and subtraction are identical. However, the modifications necessary to generalize our results to all fields are not difficult.

\subsection{Pivots}
If a matrix $A$ has block form $\smat{P}{Q}{R}{S}$, and $X$ is the set of the first $m$ basis vectors (so that $A$ restricted to columns and rows corresponding to $X$ is $P$), then the \emph{pivot} $A \ast X$ is

\begin{equation}
 A \ast X = \mat{-P^{-1}}{P^{-1}Q}{RP^{-1}}{S - RP^{-1}Q}.
\end{equation}

Some sources give a different definition, differing from ours in the signs of the first block column. The definition we use is the definition from \cite{brijder}, which has the virtue of preserving symmetry of a matrix. Both definitions are, of course, equivalent over $\textbf{F}_2$. Notice that if $A$ is the adjacency matrix of $G$ over $\textbf{F}_2$, and $X$ a subset of vertices, then $A \ast X$ gives the adjacency matrix of a graph with contains the reduction $\Gamma_X(G)$ as a subgraph. This observation is made in \ \cite{brijder}, where several formulas are given expressing the entries of the matrix $A \ast X$ in terms of determinants of submatrices of $A$, and a path invariance result is derived. We shall slightly generalize these formulas to formulas for ranks of submatrices in the pivot. In order to do this, we introduce a useful notation, which makes many properties of the pivot immediately visible: the notion of a pair-class of matrices.

\begin{definition}
 Two pairs of matrices $(A_1,B_1),(A_2,B_2)$ are \emph{row-equivalent} if there exists an invertible matrix $M$ such that $(MA_1, MB_1) = (A_2,B_2)$. Denote by $[A,B]$ the equivalence class of pairs of matrices row-equivalent to $(A,B)$. These equivalence classes are called \emph{pair-classes} of matrices. A pair-class $[A,B]$ is called \emph{proper} if $A$ is nonsingular.
\end{definition}

We observe that, as long as the block matrix $\row{A}{B}$ is full rank, the pair-class $[A,B]$ may be identified with its row space. In this setting, we can define the pair-class as a $1$-dimensional subspace in the $n^{th}$ wedge power of a $2n$-dimensional vector space, or as an element of a Grassmannian variety. Proper pair-classes $[I,A]$ then form a distinguished affine open set of this variety, and determinants of submatrices of $A$ appear as the coordinates in the Pl\"{u}cker embedding of the Grassmanian. Details of these notions may be found in \cite{harris}, chapter 6. This perspective is what originally motivated this approach, and may provide geometric intuition, but we shall not discuss it further because it is not needed for this paper.

Any proper class $[A,B]$ may be written equivalently as $[I,A^{-1}B]$. Indeed, $(I,A^{-1}B)$ is the unique pair in the class $[A,B]$ with $I$ in the first entry, and such a pair exists if and only if the class $[A,B]$ is proper. Thus we have a bijection between square $n \times n$ matrices and proper classes of pairs of $n \times n$ matrices: to a matrix $A$ we may associate the pair-class $[I,A]$, and to a proper pair-class $[A,B]$ we may associate the matrix $A^{-1}B$. We shall define a pivot operation on pair-classes, and show that it is identical, via this correspondence, to the pivot operation as defined for matrices.

\begin{definition}
 Suppose $A = \smat{P_1}{Q_1}{R_1}{S_1}$ and $B = \smat{P_2}{Q_2}{R_2}{S_2}$ are two matrices in block form, where $P_1,P_2$ is an $m \times m$ block and $X$ is the set of the first $m$ basis vectors. The \emph{pivot} of $[A,B]$ by $X$ is
\begin{equation}
 [A, B] \ast X = \left[ \smat{P_2}{Q_1}{R_2}{S_1}, \smat{-P_1}{Q_2}{-R_1}{
S_2} \right]
\end{equation}
\end{definition}

Thus the pivot simply exchanges the first $m$ columns of the two matrices and inverts the signs of these columns in the second matrix. Of course, in $\textbf{F}_2$, the signs are irrelevant, so we may simply view the pivot as exchanging blocks of columns. The following fact is immediate from this definition. Here $X \oplus Y$ denotes the symmetric set difference, $(X \cup Y) \backslash (X \cap Y)$.

\begin{lemma}\label{pivotPathInvariance}
 For a pair-class over $\textbf{F}_2$,
\begin{equation}
 ([A,B] \ast X) \ast Y = [A,B] \ast (X \oplus Y).
\end{equation}
\end{lemma}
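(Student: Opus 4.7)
The plan is to exploit the fact that, working over $\textbf{F}_2$, signs disappear and the pivot operation becomes nothing more than a column swap between the two matrices of a pair. Concretely, I would first reformulate the definition of $[A,B]\ast X$ so it applies to an arbitrary subset $X$ of basis vectors, not just to the initial segment: after a reordering of the basis vectors the matrices $A,B$ can be written in block form with the columns indexed by $X$ coming first, and the pivot simply swaps those columns of $A$ with the corresponding columns of $B$ (the sign flips in the definition can be dropped over $\textbf{F}_2$).

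With this reformulation, the proof is a column-by-column bookkeeping argument. For each basis vector $i$, let $a_i,b_i$ denote the $i$-th columns of $A$ and $B$ respectively. Pivoting by $X$ exchanges $a_i$ with $b_i$ if and only if $i\in X$, and then pivoting the result by $Y$ exchanges the $i$-th columns again if and only if $i\in Y$. Therefore the $i$-th columns of the two matrices in $([A,B]\ast X)\ast Y$ are swapped from their original positions in $[A,B]$ precisely when $i$ lies in exactly one of $X$ and $Y$, i.e.\ when $i\in X\oplus Y$. This is identical to the effect of the single pivot $[A,B]\ast(X\oplus Y)$.

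The only remaining point is well-definedness on pair-classes: I would note that left multiplication of the pair $(A,B)$ by an invertible matrix $M$ commutes with any column-swap operation between $A$ and $B$, so the pivot descends to a well-defined operation on $[A,B]$, and the column-tracking argument above is insensitive to the choice of representative.

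The main obstacle is purely notational rather than mathematical: the definition as stated is phrased in terms of a specific block decomposition, so the bulk of the work is in recasting the pivot abstractly as the column-swap operation on pair-classes indexed by a subset of basis vectors. Once this reformulation is in place, path invariance under symmetric difference is immediate from the fact that swapping twice restores the original column, while swapping once leaves it exchanged.
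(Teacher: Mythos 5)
Your proof is correct and is essentially the argument the paper intends: the paper declares the lemma ``immediate from this definition'' after observing that over $\textbf{F}_2$ the pivot is just an exchange of the designated columns between the two matrices, and your column-by-column bookkeeping (swap twice $=$ identity, swap once $=$ exchanged, hence the composite swaps exactly the columns in $X \oplus Y$) together with the remark that left multiplication by an invertible $M$ commutes with column exchanges is precisely the spelled-out version of that observation.
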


Now suppose $A$ is a matrix with block form $\smat{P}{Q}{R}{S}$. Then $A$ corresponds to the pair-class $[I,A]$. Pivoting by the first $m$ basis vectors, we obtain:

\begin{eqnarray*}
 [I,A] \ast X &=& \left[ \smat{P}{0}{R}{I}, \smat{-I}{Q}{0}{S} \right],
\end{eqnarray*}

which is proper if and only if $P$ is invertible. In this case, we can rewrite it by performing row operations as follows:

\begin{eqnarray*}
 [I,A] \ast X &=& \left[ \smat{I}{0}{R}{I}, \smat{-P^{-1}}{P^{-1}Q}{0}{S} \right]\\
&=& \left[ \smat{I}{0}{0}{I}, \smat{-P^{-1}}{P^{-1}Q}{R P^{-1}}{S - R P^{-1} Q} \right]\\
&=& [I, A \ast X].
\end{eqnarray*} 

Thus we see that pivots between proper pair-classes correspond precisely to the matrix pivot operation defined at the beginning. The main benefit of considering pivots on pair-classes rather than matrices is the ease with which we may consider ranks of submatrices in this framework. We also observe that this correspondence, combined with Lemma \ref{pivotPathInvariance}, gives an easy proof of the fact that for matrices over $\textbf{F}_2$, $(A \ast X) \ast Y = A \ast (X \oplus Y)$. In particular, this shows path invariances for sequences of pivot operations on disjoint vertices. This path invariance property is also observed in \cite{brijder}, in generalizing similar theorems from \cite{arratia}, \cite{genest}, and \cite{oum}. These papers approach the problem by analyzing determinants of submatrices of pivots, a computation which will follow (in $\textbf{F}_2$) as a special case of formulas we observe below for ranks of submatrices of pivots.

Another reason we have chosen to approach pivots from the perspective of pair-classes is the ease with which we can study the pivotal poset in this context.

\begin{definition}
 Suppose $[A,B]$ is a pair-class of matrices, with rows and columns indexed by $V$, and $W_1, W_2 \subset V$ are two subsets such that $|W_1| = |W_2|$. Then $N_{W_1,W_2}([A,B])$ denotes the nullity (i.e. dimension of the kernel) of the matrix formed by the columns $V \backslash W_1$ from $A$ and the columns $W_2$ from $B$.
\end{definition}

\begin{definition}
 The \emph{pivotal poset} of a pair-class $[A,B]$, denoted $\cR_0([A,B])$, is the set of subsets $W \subset V$ such that $N_{W,W}([A,B]) = 0$.
\end{definition}

It is not difficult to verify that $N_{W_1,W_2}([A,B])$ and $\cR_0([A,B])$ are well-defined, in the sense that they do not depend on which representative of the equivalence class $[A,B]$ is chosen. We have chosen these definitions due to their meaning in case of proper pair-classes.

\begin{lemma}
 For a proper pair-class $[I,A]$, $N_{W_1,W_2}([I,A])$ is the nullity of the submatrix of $A$ on rows $W_1$ and columns $W_2$. If $A$ is the adjacency matrix of a graph $G$, then $\cR_0([I,A]) = \cR_0(G)$. A pair-class $[A,B]$ is proper if and only if $\emptyset \in \cR_0([A,B])$.
\end{lemma}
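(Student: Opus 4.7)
The three claims unpack the definitions of $N_{W_1,W_2}$ and $\cR_0$ for a pair-class against the standard notions of nullity and reducibility; I would handle them in the order stated, with the first claim doing all the real work.

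For the first claim, I would argue by an explicit block decomposition. Order the rows of the matrix $M = [\,I_{V\setminus W_1}\ \vert\ A_{\,\cdot\,,W_2}\,]$ (the $|V|$-by-$(|V\setminus W_1|+|W_2|)$ matrix defining $N_{W_1,W_2}([I,A])$) so that the rows indexed by $V \setminus W_1$ come first, and the rows indexed by $W_1$ come last. The columns coming from $I$ are the standard basis vectors $e_v$ for $v \in V \setminus W_1$; after the row reordering, these assemble into the identity matrix in the top-left block and the zero matrix in the bottom-left block. The columns coming from $A$ split into the submatrix with rows in $V\setminus W_1$ on top and the submatrix $A[W_1,W_2]$ on the bottom. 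Thus
\begin{equation*}
M \sim \begin{bmatrix} I & A[V\setminus W_1, W_2] \\ 0 & A[W_1,W_2] \end{bmatrix}.
\end{equation*}
Row-reducing to kill the upper-right block leaves the nullity of $M$ equal to that of $A[W_1,W_2]$, which is the first claim.

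The second claim follows by combining the first with the linear-algebraic characterization of reducibility from Example \ref{minimalExample} and the definition of nullity. By the first claim, $W \in \cR_0([I,A])$ exactly when the principal submatrix $A[W,W]$ is nonsingular. Example \ref{minimalExample} shows that if $A[W,W]$ is invertible then $W$ is reducible in $G$, and in that case $\rkg{W} = |W|$, so $W$ has nullity $0$ in $G$ and therefore lies in $\cR_0(G)$. Conversely, membership in $\cR_0(G)$ says $W$ is reducible with $|W| = \rkg{W}$, which forces the principal submatrix $A[W,W]$ to have full rank, and hence to be nonsingular. This gives $\cR_0([I,A]) = \cR_0(G)$.

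The third claim is immediate once one notes that $N_{\emptyset,\emptyset}([A,B])$ is the nullity of the matrix formed by the columns of $A$ indexed by $V\setminus\emptyset = V$ (and no columns from $B$), which is just $A$ itself; hence $\emptyset \in \cR_0([A,B])$ if and only if $A$ is nonsingular, i.e.\ if and only if $[A,B]$ is proper. Since this uses only the definitions, which apply to arbitrary (not necessarily proper) pair-classes, it closes the lemma. There is no real obstacle here; the only mild care needed is in the first claim, to keep straight which subset of $V$ indexes rows versus columns so that the identity block ends up eliminating the correct rows.
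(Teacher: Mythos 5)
Your proof is correct and follows essentially the same route as the paper, whose own proof simply asserts that the first claim ``follows easily by writing $I$ and $A$ in block form'' and that the remaining claims follow from the first together with the definitions; your write-up supplies exactly those details. One small slip: the upper-right block $A[V\setminus W_1, W_2]$ in your displayed decomposition is cleared by \emph{column} operations against the identity block, not row operations (the only rows vanishing in the left block are $[\,0 \ \ A[W_1,W_2]\,]$, which cannot in general cancel it) --- though the nullity count is immediate from the block form either way, since the rank is visibly $|V\setminus W_1| + \mathrm{rank}\,A[W_1,W_2]$.
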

\begin{proof}
 The first statement follows easily by writing $I$ and $A$ in block form. The second statement follows immediately from the first and the definition of a proper pair-class.
\end{proof}

The following lemma can in fact be used to uniquely characterize the pivot operation over the field $\textbf{F}_2$. The corollary following the lemma will allow us to uniquely characterize the pivot operation on graphs. In Corollary \ref{pivotPosets} and elsewhere, we write $S \oplus X$, which $S$ is a set of sets of vertices and $X$ is a set of vertices, to indicate the set $\{Y \oplus X: Y \in S \}$.

\begin{lemma}\label{pivotNullity}
 If $[A,B]$ is a pair-class, with rows and columns of $A,B$ indexed by $V$, then for any sets $W_1,W_2,X \subset V$ such that $|W_1| = |W_2|$,
\begin{equation}\label{pivotLemmaEq1}
 N_{W_1,W_2} ([A,B] \ast X) = N_{(W_1 \cap X^c) \cup (W_2^c \cap X), (W_2 \cap X^c) \cup (W_1^c \cap X)}([A,B]),
\end{equation}
where the superscript $c$ indicates complement in $V$.
\end{lemma}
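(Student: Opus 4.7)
The plan is to prove the lemma by direct inspection of which columns appear in the matrices defining the two sides. The key observation is that over $\textbf{F}_2$ (or, more generally, up to column signs, which are invisible to the nullity) the pivot $[A,B] \ast X$ simply swaps the $X$-indexed columns between $A$ and $B$: for a position $v \in X$, the column that used to live in $A$ now sits in $B$ and vice versa, while columns at positions in $X^c$ are unchanged. This is immediate from the block description of pivot given earlier in the section.

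Using this, I would unpack both sides. By definition, $N_{W_1,W_2}([A,B] \ast X)$ is the nullity of the matrix $M'$ obtained by taking the columns at positions $V \backslash W_1$ from the first entry of $[A,B]\ast X$ and the columns at positions $W_2$ from the second entry. Splitting each index by whether it lies in $X$ or $X^c$, and using the swap description of the pivot, the columns of $M'$ coming from the original $A$ are exactly those at positions $(W_1^c \cap X^c) \cup (W_2 \cap X)$, while the columns coming from the original $B$ are those at positions $(W_1^c \cap X) \cup (W_2 \cap X^c)$, where superscript $c$ means complement in $V$.

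Now set $W_1' = (W_1 \cap X^c) \cup (W_2^c \cap X)$ and $W_2' = (W_2 \cap X^c) \cup (W_1^c \cap X)$. A short set-theoretic check (just splitting each set by its intersection with $X$ and with $X^c$) gives $V \backslash W_1' = (W_1^c \cap X^c) \cup (W_2 \cap X)$ and $W_2' = (W_1^c \cap X) \cup (W_2 \cap X^c)$, which are exactly the $A$- and $B$-column positions found above. A parallel counting argument, using $|W_1| = |W_2|$, shows $|W_1'| = |W_2'|$, so the right-hand side is well-defined. Thus $M'$ agrees with the matrix defining $N_{W_1', W_2'}([A,B])$ up to a permutation of its columns and some sign flips, both of which preserve nullity, so the two nullities coincide.

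The main obstacle is purely the bookkeeping: keeping straight which of the four pieces $W_i \cap X$, $W_i \cap X^c$, $W_i^c \cap X$, $W_i^c \cap X^c$ ends up on which side of the pivot. Once the partition of $V$ by $X$, $W_1$, and $W_2$ is laid out, the verification is mechanical, and the content of the lemma reduces to the tautological statement that exchanging $A$- and $B$-columns at positions in $X$ is precisely what the map $(W_1, W_2) \mapsto (W_1', W_2')$ does on the level of column-indexing data.
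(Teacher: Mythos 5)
Your proposal is correct and is exactly the paper's argument: the paper's proof is the one-sentence observation that the two sides are nullities of matrices identical up to permutation and signing of columns, and your write-up simply carries out the column bookkeeping (including the check that $V \backslash W_1'$ and $W_2'$ match the $A$- and $B$-column positions after the swap) that the paper leaves implicit.
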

\begin{proof}
 The two sides of equation \ref{pivotLemmaEq1} denote the nullities of matrices which are identical up to permutation and signing of the columns, which thus have the same rank.
\end{proof}
\begin{corollary}\label{pivotPosets}
 For any pair-class $[A,B]$ and subset $X \subset V$,
\begin{equation}
 \cR_0([A,B] \ast X) = \cR_0([A,B]) \oplus X.
\end{equation}
\end{corollary}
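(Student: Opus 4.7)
The plan is to derive the corollary as an immediate specialization of Lemma \ref{pivotNullity} with $W_1 = W_2 = W$, so the main work is just unpacking definitions carefully.

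First I would observe that, by definition of the pivotal poset, a subset $W \subset V$ lies in $\cR_0([A,B]\ast X)$ if and only if $N_{W,W}([A,B]\ast X) = 0$. I would then apply Lemma \ref{pivotNullity} with $W_1 = W_2 = W$. The index set appearing on the right side of \eqref{pivotLemmaEq1} is $(W \cap X^c) \cup (W^c \cap X)$, which is precisely the symmetric difference $W \oplus X$. So the lemma yields
\begin{equation*}
N_{W,W}([A,B] \ast X) = N_{W \oplus X,\, W \oplus X}([A,B]).
\end{equation*}
Hence $W \in \cR_0([A,B]\ast X)$ if and only if $W \oplus X \in \cR_0([A,B])$.

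Next I would unpack the notation $\cR_0([A,B]) \oplus X$, which was defined as $\{Y \oplus X : Y \in \cR_0([A,B])\}$. Since $\oplus X$ is an involution on subsets of $V$, we have $W \in \cR_0([A,B]) \oplus X$ if and only if $W \oplus X \in \cR_0([A,B])$. Combining with the previous equivalence gives
\begin{equation*}
W \in \cR_0([A,B]\ast X) \iff W \in \cR_0([A,B]) \oplus X,
\end{equation*}
which is the desired set equality.

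I do not expect any real obstacle here: the only step with content is the identification of $(W \cap X^c) \cup (W^c \cap X)$ with $W \oplus X$, which is standard, and the observation that $\oplus X$ is its own inverse. All the substance has been packed into Lemma \ref{pivotNullity}, whose symmetric-in-rows-and-columns form was precisely designed so that the case $W_1 = W_2$ collapses cleanly to a statement about the pivotal poset.
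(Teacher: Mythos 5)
Your proof is correct and follows the same route as the paper: both specialize Lemma \ref{pivotNullity} to $W_1 = W_2 = W$, identify $(W \cap X^c) \cup (W^c \cap X)$ with $W \oplus X$, and conclude the set equality. Your version just spells out the involution step ($W \in \cR_0([A,B]) \oplus X \iff W \oplus X \in \cR_0([A,B])$) that the paper leaves implicit.
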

\begin{proof}
 This follows by considering the special case $W_1 = W_2$ in Lemma \ref{pivotNullity}, which is $N_{W,W}([A,B]\ast X) = N_{W \oplus X, W \oplus X}([A,B])$. 
\end{proof}

We observe that, over $\textbf{F}_2$, Lemma \ref{pivotNullity} may be regarded as a generalization of the following determinant formula, discussed in \cite{brijder} (proposition 3).

\begin{equation}
 \textrm{det}(A \ast X)_{Y,Y} = \pm \textrm{det} A_{X \oplus Y} / \textrm{det}(A_{Y,Y}).
\end{equation}

In fact, our method of pair-classes can also be used to establish this result over general fields without much effort, although the definition must be modified to consider two pairs equivalent only if they differ by multiplication on the left by a matrix of determinant $1$, rather than any invertible matrix. Over $\textbf{F}_2$, this distinction is nonexistant.

We have now established the main properties of pivots of pair-classes, which allow us to characterize the pivot of a graph combinatorially.

\begin{theorem}
 If $G$ is a graph with pivotal poset $\cR_0(G)$ and $W\subset V$ is a subset of the vertices of $G$, then $\cR_0(G) \oplus W$ is realizable if and only if $W \in \cR_0(G)$. If $G$ has adjacency matrix $A$, then the graph realizing $\cR_0(G) \oplus W$ has adjacency matrix $A \ast W$.
\end{theorem}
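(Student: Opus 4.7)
The plan is to translate the statement into the language of pair-classes, where Corollary \ref{pivotPosets} does essentially all of the work. The adjacency matrix $A$ of $G$ corresponds to the proper pair-class $[I,A]$, whose pivotal poset is $\cR_0(G)$ by the lemma preceding Corollary \ref{pivotPosets}. Pivoting by $W$ yields a pair-class with $\cR_0([I,A]\ast W) = \cR_0(G) \oplus W$. The theorem then reduces to two questions: when is this pivoted pair-class proper (so that it arises from an actual graph), and when it is, what is that graph's adjacency matrix?

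For the converse direction, suppose $\cR_0(G) \oplus W$ is realized by some graph $G'$ with adjacency matrix $A'$. Then $\cR_0(G') = \cR_0([I,A'])$ contains $\emptyset$, because $[I,A']$ is proper. Hence $\emptyset \in \cR_0(G) \oplus W$, and since $Y \oplus W = \emptyset$ forces $Y = W$, this requires $W \in \cR_0(G)$.

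For the forward direction, suppose $W \in \cR_0(G)$, so the submatrix $P$ of $A$ indexed by $W$ is invertible. I would invoke the row-operation computation from the pivots subsection showing that in exactly this situation $[I,A] \ast W = [I, A \ast W]$. A short symmetry check, using $A = \smat{P}{Q}{Q^T}{S}$ with $P = P^T$ and $S = S^T$, confirms that $A \ast W$ is symmetric and hence the adjacency matrix of some graph $G'$. Corollary \ref{pivotPosets} then yields $\cR_0(G') = \cR_0([I, A \ast W]) = \cR_0([I,A] \ast W) = \cR_0(G) \oplus W$, and uniqueness of $G'$ follows from Theorem \ref{posetDeterminesGraph}.

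The main obstacle, if it can be called one, is bookkeeping rather than conceptual: cleanly identifying ``realizable'' with ``the pivoted pair-class is proper and equal to $[I, A \ast W]$'', and verifying symmetry of $A \ast W$ without getting lost in block notation. Once these reductions are made, both directions fall out immediately from Corollary \ref{pivotPosets} together with the characterization of properness in terms of membership of $\emptyset$ in the pivotal poset.
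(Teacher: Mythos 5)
Your proposal is correct and follows essentially the same route as the paper: both directions are reduced to pair-classes, with the converse handled by noting $\emptyset \in \cR_0(G)\oplus W$ forces $W \in \cR_0(G)$, and the forward direction handled by the identity $[I,A]\ast W = [I, A\ast W]$, symmetry of $A \ast W$, and Corollary \ref{pivotPosets}. Your explicit appeal to Theorem \ref{posetDeterminesGraph} for uniqueness is a minor clarification of a step the paper leaves implicit, not a different argument.
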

\begin{proof}
 Suppose that $\cR_0(G) \oplus W$ is realizable by the graph $H$ with adjacency matrix $B$. Then $\emptyset \in \cR_0(H) = \cR_0(G)$, hence $\emptyset \oplus W = W \in \cR_0(G)$. Then observe that the pair-class $[I,B]$ must have the same pivotal poset as $[I,A] \ast W$, by Corollary \ref{pivotPosets}. Since $[I,A] \ast X = [I,A \ast X]$, we must have $B = A \ast X$.

Conversely, suppose that $W \in \cR_0(G)$. Then the pair-class $[I, A \ast W] = [I,A] \ast W$ has pivotal poset $\cR_0(G) \oplus W$, by a similar analysis to above. Since $A \ast W$ is symmetric, it is the adjacency matrix of a graph realizing the pivotal poset $\cR_0(G) \oplus W$, as desired.
\end{proof}

\begin{definition} \label{pivotDefinition}
 If $G$ is a graph, and $W \in \cR_0(G)$, then the graph whose pivotal poset is $\cR_0(G) \oplus W$ is called the \emph{pivot of $G$ by $W$} and is denoted $P_W(G)$.
\end{definition}

Observe that, given this characterization of the pivot of a graph, the following path invariance property is immediately clear. This is essentially the pivot analogue of Theorem \ref{pathInvariance}.

\begin{theorem}
 If $G$ is a graph, and $W_1,W_2 \subset V$ are two sets of vertices with $W_1 \in \cR_0(G)$, then $W_2 \in \cR_0(P_{W_1}(G))$ if and only if $W_1 \oplus W_2 \in \cR_0(G)$, and $P_{W_2} \circ P_{W_1} (G) = P_{W_1 \oplus W_2}(G)$.
\end{theorem}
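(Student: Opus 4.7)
The plan is to exploit the characterization developed immediately before the statement: by Definition \ref{pivotDefinition} together with the theorem that realizes $\cR_0(G) \oplus W$ as $\cR_0(P_W(G))$, the pivotal poset of the pivot is simply the symmetric difference $\cR_0(P_W(G)) = \cR_0(G) \oplus W$. Both claims of the theorem should then collapse to straightforward computations with $\oplus$.

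First, for the equivalence: assuming $W_1 \in \cR_0(G)$, so that $P_{W_1}(G)$ is defined, I would compute
\begin{equation*}
\cR_0(P_{W_1}(G)) = \cR_0(G) \oplus W_1,
\end{equation*}
so $W_2 \in \cR_0(P_{W_1}(G))$ if and only if $W_2 \in \cR_0(G) \oplus W_1$, which by the involutivity of symmetric difference is equivalent to $W_1 \oplus W_2 \in \cR_0(G)$. This also shows that the hypothesis $W_1 \oplus W_2 \in \cR_0(G)$ is exactly what is needed for $P_{W_1 \oplus W_2}(G)$ to be defined on the right-hand side, so both sides of the asserted equation make sense under the same hypothesis.

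Next, for the equality of graphs: under these hypotheses I would compute the pivotal poset of each side. On one side,
\begin{equation*}
\cR_0\bigl(P_{W_2}(P_{W_1}(G))\bigr) = \cR_0(P_{W_1}(G)) \oplus W_2 = (\cR_0(G) \oplus W_1) \oplus W_2 = \cR_0(G) \oplus (W_1 \oplus W_2),
\end{equation*}
using associativity of $\oplus$ on sets. On the other side, $\cR_0(P_{W_1 \oplus W_2}(G)) = \cR_0(G) \oplus (W_1 \oplus W_2)$ directly. Since both graphs are on the same vertex set $V$ and have the same pivotal poset, Theorem \ref{posetDeterminesGraph} forces them to be equal.

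The only subtlety worth checking, and the one place where I would be careful, is the implicit claim that $W_2 \in \cR_0(P_{W_1}(G))$ is exactly the well-definedness condition allowing the outer pivot $P_{W_2}$ to be applied to $P_{W_1}(G)$; this is exactly the content of Definition \ref{pivotDefinition}, so no further work is needed. (A reader who prefers a matrix-level argument can obtain the same conclusion more directly by applying Lemma \ref{pivotPathInvariance} to the pair-class $[I,A]$, where $A$ is the adjacency matrix of $G$: this gives $[I,A] \ast W_1 \ast W_2 = [I,A] \ast (W_1 \oplus W_2)$, and translating back to matrices yields $(A \ast W_1) \ast W_2 = A \ast (W_1 \oplus W_2)$.) There is no real obstacle: all the nontrivial content has been front-loaded into the preceding lemmas establishing the symmetric-difference behavior of $\cR_0$ under pivoting.
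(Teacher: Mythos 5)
Your proof is correct and follows essentially the same route as the paper's: both reduce the first claim to the identity $\cR_0(P_{W_1}(G)) = \cR_0(G) \oplus W_1$ from Definition \ref{pivotDefinition}, and both establish the second claim by computing that the two graphs share the pivotal poset $\cR_0(G) \oplus W_1 \oplus W_2$ and invoking Theorem \ref{posetDeterminesGraph}. Your version simply spells out the symmetric-difference bookkeeping that the paper leaves implicit.
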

\begin{proof}
 The first assertion is a consequence of definition \ref{pivotDefinition}. The second follows since the pivotal posets of $P_{W_2} \circ P_{W_1} (G)$ and $P_{W_1 \oplus W_2}(G)$ are both $\cR_0{G} \oplus W_1 \oplus W_2$.
\end{proof}

As we remarked at the beginning of this section, there is a close relation between pivots of graphs and graph reductions. This relation is expressed in the following theorem.

\begin{theorem}\label{pivotReduction}
 If $G$ is a signed graph, and $W_1,W_2 \subset V$ are two sets of vertices such that $W_1$ and $W_1 \backslash W_2$ both lie in $\cR_0(G)$, then
\begin{equation}
 I_{W_2} \circ P_{W_1} (G) = P_{W_1 \cap W_2} \circ \Gamma_{W_1 \backslash W_2} \circ I_{W_1 \cup W_2} (G),
\end{equation}
where $I_{U}(G)$ denotes the induced subgraph on vertices $U$ of $G$.
\end{theorem}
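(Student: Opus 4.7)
The plan is to apply Theorem \ref{posetDeterminesGraph}: two graphs with the same vertex set and the same pivotal poset are identical, so it suffices to check these two invariants for each side. For the vertex sets, $I_{W_2} \circ P_{W_1}(G)$ has vertex set $W_2$ because $P_{W_1}$ does not change the vertex set, while on the right $(W_1 \cup W_2) \setminus (W_1 \setminus W_2) = W_2$, so both sides have vertex set $W_2$. The core of the proof will then be to show that both pivotal posets equal
\begin{equation*}
\mathcal{P} := \{S \subset W_2 :\ S \oplus W_1 \in \cR_0(G)\}.
\end{equation*}

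First I would handle the left hand side. By Definition \ref{pivotDefinition}, $\cR_0(P_{W_1}(G)) = \cR_0(G) \oplus W_1$, and because nonsingularity of a submatrix is intrinsic to that submatrix, taking the induced subgraph $I_{W_2}$ simply selects those members of $\cR_0(P_{W_1}(G))$ that lie in $W_2$. Unwinding the symmetric difference gives $\cR_0(I_{W_2} \circ P_{W_1}(G)) = \{S \subset W_2 :\ S \oplus W_1 \in \cR_0(G)\} = \mathcal{P}$.

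For the right hand side, let $H = I_{W_1 \cup W_2}(G)$, so $\cR_0(H) = \{S \subset W_1 \cup W_2 :\ S \in \cR_0(G)\}$, again because nonsingularity is intrinsic. Because $W_1 \setminus W_2 \in \cR_0(G)$, it lies in $\cR_0(H)$, hence is reducible in $H$ by Example \ref{minimalExample}. For any $T \subset W_2$, $T$ and $W_1 \setminus W_2$ are disjoint; Theorem \ref{ranksOfReductions} then yields $\rkg{T}_{\Gamma_{W_1 \setminus W_2}(H)} = \rk{H}_{T \cup (W_1 \setminus W_2)} - |W_1 \setminus W_2|$, so $T$ is nonsingular in $\Gamma_{W_1 \setminus W_2}(H)$ if and only if $T \cup (W_1 \setminus W_2) \in \cR_0(G)$. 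Taking $T = W_1 \cap W_2$ gives $T \cup (W_1 \setminus W_2) = W_1 \in \cR_0(G)$, confirming that $W_1 \cap W_2$ is a legal pivot target. Applying Definition \ref{pivotDefinition} again, $\cR_0(P_{W_1 \cap W_2} \circ \Gamma_{W_1 \setminus W_2}(H))$ equals the previous poset symmetric-differenced with $W_1 \cap W_2$. Writing $S = T \oplus (W_1 \cap W_2)$, the inclusions $T \subset W_2$ and $W_1 \cap W_2 \subset W_2$ give $S \subset W_2$, and since $W_1 \setminus W_2$ is disjoint from $W_2$, the identity $T \cup (W_1 \setminus W_2) = T \oplus (W_1 \cap W_2) \oplus (W_1 \cap W_2) \oplus (W_1 \setminus W_2) = S \oplus W_1$ converts the membership condition into $S \oplus W_1 \in \cR_0(G)$. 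Thus the right hand pivotal poset is $\mathcal{P}$ as well, and the theorem follows from Theorem \ref{posetDeterminesGraph}.

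The main obstacle will be keeping the bookkeeping straight: verifying that every intermediate operation is defined (reducibility of $W_1 \setminus W_2$ in $H$, pivotability of $W_1 \cap W_2$ in the reduction), and cleanly executing the symmetric-difference identity that converts $T \cup (W_1 \setminus W_2)$ to $S \oplus W_1$ under the hypothesis that $W_1 \setminus W_2$ is disjoint from $W_2$. Conceptually, the result says that the pivot by $W_1$ factors as ``restrict to the active region $W_1 \cup W_2$, reduce the purely-$W_1$ part $W_1 \setminus W_2$ (which costs no negative rule since it is nonsingular), then pivot only the overlap $W_1 \cap W_2$''.
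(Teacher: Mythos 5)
Your proof is correct and takes essentially the same approach as the paper's: both reduce the claimed identity to an equality of pivotal posets via Theorem \ref{posetDeterminesGraph}, check that each intermediate operation is defined, and use the set identity $(S \oplus (W_1 \cap W_2)) \cup (W_1 \setminus W_2) = S \oplus W_1$ for $S \subset W_2$ to match the two sides. Your explicit justification, via Theorem \ref{ranksOfReductions}, of the step relating the pivotal poset of $\Gamma_{W_1 \setminus W_2}(H)$ to that of $H$ is a detail the paper's chain of equalities leaves implicit.
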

\begin{proof}
 First we show that the expression on the right side of the equation is well-defined. $\Gamma_{W_1 \backslash W_2}$ applies to $I_{W_1 \cup W_2}(G)$ because we have assumed that $W_1 \backslash W_2$ is nonsingular in $G$. Now $P_{W_1 \cap W_2}$ applies to the graph $\Gamma_{W_1 \backslash W_2} \circ I_{W_1 \cup W_2} (G)$ if and only if $W_1 \cap W_2$ is nonsingular in $\Gamma_{W_1 \backslash W_2} \circ I_{W_1 \cup W_2} (G)$, which is true if and only if $(W_1 \cap W_2) \cup (W_1 \backslash W_2) = W_1$ is nonsingular in $I_{W_1 \cup W_2}(G)$, which follows from our assumptions.

Now the graphs described by the two sides of this equation have the same vertex set, so by Theorem \ref{posetDeterminesGraph}, it suffices to show that they have the same pivotal poset. Now observe that
\begin{eqnarray*}
 && \cP_0(I_{W_2} \circ P_{W_1} (G))\\ &=& \{ S \subset W_2: S \oplus W_1 \in \cP_0(I_{W_1 \cup W_2}(G)) \}\\
&=& \{ S \subset W_2: (S \oplus (W_1 \cap W_2)) \cup (W_1 \backslash W_2) \in \cP_0(I_{W_1 \cup W_2} (G)) \}\\
&=& \{ S \subset W_2: S \oplus (W_1 \cap W_2) \in \cP_0(\Gamma_{W_1 \backslash W_2} \circ I_{W_1 \cup W_2} (G)) \}\\
&=& \cP_0(\Gamma_{W_1 \backslash W_2} \circ I_{W_1 \cup W_2} (G)) \oplus (W_1 \cap W_2)\\
&=& \cP_0(P_{W_1 \cap W_2} \circ \Gamma_{W_1 \backslash W_2} \circ I_{W_1 \cup W_2} (G)).
\end{eqnarray*}
Thus these two graphs have the same pivotal poset, and thus are equal.
\end{proof}

There are two important special cases of Theorem \ref{pivotReduction}, expressed in the following corollary.

\begin{corollary}
 If $U \cup W = V$, and $U, U \backslash W \in \cR_0(G)$, then
\begin{equation}\label{pivotsOfReductions}
 I_W \circ P_U (G) = P_{U \cap W} \circ \Gamma_{U \backslash W} (G).
\end{equation}
If $U$ and $W$ are disjoint and $U \in \cR_0(G)$, then
\begin{equation}\label{pivotsHaveReductions}
 I_W \circ P_U (G) = \Gamma_U (G).
\end{equation}
\end{corollary}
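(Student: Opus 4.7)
The plan is to deduce both equations as direct specializations of Theorem \ref{pivotReduction}, which states that under the hypotheses $W_1, W_1 \backslash W_2 \in \cR_0(G)$,
\begin{equation*}
I_{W_2} \circ P_{W_1}(G) = P_{W_1 \cap W_2} \circ \Gamma_{W_1 \backslash W_2} \circ I_{W_1 \cup W_2}(G).
\end{equation*}
No deeper machinery is required; the work is just bookkeeping about which set plays which role.

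For equation (\ref{pivotsOfReductions}), I would substitute $W_1 = U$ and $W_2 = W$ into Theorem \ref{pivotReduction}. The corollary's hypotheses $U \in \cR_0(G)$ and $U \backslash W \in \cR_0(G)$ are exactly the hypotheses the theorem requires. The conclusion reads
\begin{equation*}
I_W \circ P_U(G) = P_{U \cap W} \circ \Gamma_{U \backslash W} \circ I_{U \cup W}(G),
\end{equation*}
and the assumption $U \cup W = V$ reduces $I_{U \cup W}(G)$ to $G$ itself, giving the desired identity.

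For equation (\ref{pivotsHaveReductions}), the cleanest approach is to reduce to equation (\ref{pivotsOfReductions}). When $U$ and $W$ are disjoint (and $U \cup W = V$, which is needed for the vertex sets of the two sides to coincide), we have $U \backslash W = U$ and $U \cap W = \emptyset$, so the formula becomes
\begin{equation*}
I_W \circ P_U(G) = P_{\emptyset} \circ \Gamma_U(G).
\end{equation*}
It then suffices to observe that $P_\emptyset$ acts as the identity on graphs. This is immediate from Definition \ref{pivotDefinition}: for any graph $H$, the pivotal poset of $P_\emptyset(H)$ is $\cR_0(H) \oplus \emptyset = \cR_0(H)$, and Theorem \ref{posetDeterminesGraph} then forces $P_\emptyset(H) = H$.

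There is no substantive obstacle in this argument; the only point requiring a brief verification is that $P_\emptyset$ is trivial, which follows in one line from the characterization of pivots via the pivotal poset. If one prefers to avoid citing equation (\ref{pivotsOfReductions}) for the second statement, one can instead apply Theorem \ref{pivotReduction} directly with $W_1 = U$ and $W_2 = W = V \backslash U$ and then invoke $P_\emptyset = \mathrm{id}$ at the same place; the two routes are essentially identical.
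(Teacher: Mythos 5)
Your proposal is correct and matches the paper's intent exactly: the paper states this corollary without proof, presenting it as the immediate specializations $W_1 = U$, $W_2 = W$ of Theorem \ref{pivotReduction}, with $I_{U\cup W} = I_V$ the identity and (for the second equation) $U\setminus W = U$, $U\cap W = \emptyset$, and $P_\emptyset = \mathrm{id}$. Your one-line verification that $P_\emptyset$ is the identity via Theorem \ref{posetDeterminesGraph}, and your observation that the second equation implicitly needs $U \cup W = V$ for the vertex sets to agree, are both accurate and fill in exactly the bookkeeping the paper leaves implicit.
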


Equation \ref{pivotsHaveReductions} simply expresses the fact (already evident from the adjacency matrix) that, at least in the case of nonsingular reductions, we can find any graph reduction as an induced subgraph of a pivot. Equation \ref{pivotsOfReductions} demonstrates that all pivots of a reduction of a graph can be obtained by simply finding pivots of the original graph and examining an induced subgraph. A combinatorially interesting special case of this is studied in the next section.

Before concluding this section, we remark that while the pivotal poset of a graph is very well behaved under pivots, the reducibility poset is not. In other words, it is easy to characterize the nonsingular combinatorial reduction strategies of the pivot of a graph, but it is harder to characterize the stages at which the negative rule $\gnr$ will apply. For example, consider the adjacency matrices in Figure \ref{fig:posets}. Arrows indicate set inclusions, and subscripts indicate the nullity of the vertex set. The pivotal poset is simply the sub-poset of those sets with subscript $0$.

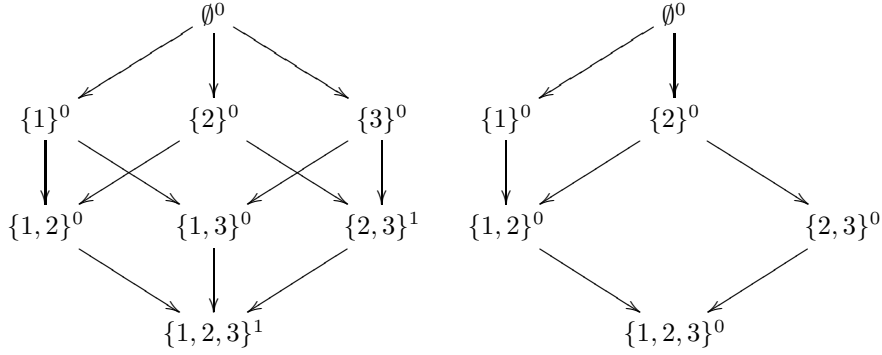
\begin{figure}
\begin{center}
\begin{tabular}{cc}
  $A = \left( \begin{array}{ccc} 1&0&0 \\ 0&1&1 \\ 0&1&1 \end{array} \right)$
&
$A \ast \{1,2\} = \left( \begin{array}{ccc} 1&0&0 \\ 0&1&1 \\ 0&1&0 \end{array} \right)$
\vspace{1cm}
\\ 
\xymatrix{
	& \emptyset^0 \ar[dl] \ar[d] \ar[dr] \\
	\{1\}^0 \ar[d] \ar[dr] & \{2\}^0 \ar[dl] \ar[dr] & \{3\}^0 \ar[dl] \ar[d] \\
	\{1,2\}^0 \ar[dr] & \{1,3\}^0 \ar[d] & \{2,3\}^1 \ar[dl] \\
	& \{1,2,3\}^1 
}
&
\xymatrix{
	& \emptyset^0 \ar[dl] \ar[d] \\
	\{1\}^0 \ar[d] & \{2\}^0 \ar[dl] \ar[dr] \\
	\{1,2\}^0 \ar[dr] && \{2,3\}^0 \ar[dl] \\
	& \{1,2,3\}^0
}
\end{tabular}
\end{center}
\caption{Two adjacency matrices and their reducibility posets.}
\label{fig:posets}
\end{figure}

Let $G_1$ be the graph corresponding to the matrix $A$ in Figure \ref{fig:posets}, and $G_2$ be the graph corresponding to $A \ast \{1,2\}$. Note that of course the pivotal poset of $G_2$ is obtained by taking the symmetric set difference of each pivotal set in $G_1$ with $\{1,2\}$. However, $G_2$ is nonsingular, whereas $G_1$ has nullity $1$, thus the reducibility poset of $G_1$ includes singular sets (namely $\{2,3\}$ and $\{1,2,3\}$, which are both in $\cR_1(G_1)$. This example shows that there cannot in general be a bijection between the reducibility poset of a graph and the reducibility poset of its pivot, since these posets may have different sizes.

\subsection{The retrograph}
If $G$ is a nonsingular graph (that is, its entire vertex set $V$ is pivotal, which is to say that no combinatorial reductions strategies of $G$ use the negative rule), then we can consider the pivot of $G$ by its entire vertex set. The graph obtained in this way has interesting combinatorial properties, which we shall now consider.

\begin{definition}
 If $G$ is a graph with vertex set $V$, and $V \in \cR_0(G)$, then the \emph{retrograph} $G^R$ of $G$ is the pivot by the entire vertex set, $P_V(G)$.
\end{definition}

The retrograph has two useful combinatorial properties, expressed by the following theorem.

\begin{theorem}\label{retrographTheorem}
 If $G$ has retrograph $G^R$, then any successful combinatorial reduction strategy of $G$ applies in reverse to $G^R$. The retrograph $G^R$ is the unique graph with whose combinatorial reduction strategies are the same as those for $G$ in reverse. If $W \subset V$ is a subset of the vertices of $G$, then $W$ is reducible in $G$ if and only if $V \backslash W$ is reducible in $G^R$, and in this case $(\Gamma_W (G))^R = I_{V \backslash W} (G^R)$.
\end{theorem}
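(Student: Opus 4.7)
The plan is to prove Part 3 directly from Corollary \ref{pivotPosets} and equation \ref{pivotsOfReductions}, then derive Parts 1 and 2 from Part 3 together with path invariance (Theorem \ref{pathInvariance}). Because $V \in \cR_0(G)$, the graph $G$ is nonsingular, and by the level-monotonicity of the reducibility poset every reducible subset of $G$ lies in $\cR_0(G)$; hence $\cR(G) = \cR_0(G)$, and similarly $\cR(G^R) = \cR_0(G^R)$ (the required $V \in \cR_0(G^R)$ follows from $\emptyset \in \cR_0(G)$ via Corollary \ref{pivotPosets}). That corollary then gives $\cR_0(G^R) = \cR_0(G) \oplus V = \{V \setminus W : W \in \cR_0(G)\}$, so $W$ is reducible in $G$ iff $V \setminus W$ is reducible in $G^R$. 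For the identity $(\Gamma_W(G))^R = I_{V \setminus W}(G^R)$, I apply equation \ref{pivotsOfReductions} with $U = V$ and with the ``$W$'' variable of that equation set to $V \setminus W$; both hypotheses hold ($V \in \cR_0(G)$ and $V \setminus (V \setminus W) = W \in \cR_0(G)$), yielding $I_{V \setminus W}(G^R) = P_{V \setminus W} \circ \Gamma_W(G)$. Since $\Gamma_W(G)$ has vertex set $V \setminus W$ and is nonsingular by Corollary \ref{rankCorollary}, the right-hand side is precisely $(\Gamma_W(G))^R$.

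For Part 1, let $(\gamma_1, \dots, \gamma_n)$ be a successful strategy of $G$ with domains $W_1, \dots, W_n$, and set $S_i = W_1 \cup \dots \cup W_i$, $T_i = V \setminus S_i$. I claim the reversed sequence of rules $(\gamma_n, \dots, \gamma_1)$ is itself a successful strategy of $G^R$. At step $n-i+1$ the working graph is $\Gamma_{T_i}(G^R)$, defined by Part 3 since $S_i$ is reducible in $G$. Combining Theorem \ref{pathInvariance} with Part 3 shows $W_i$ is reducible in $\Gamma_{T_i}(G^R)$ iff $T_{i-1}$ is reducible in $G^R$ iff $S_{i-1}$ is reducible in $G$, which holds; the same chain shows any nonempty $W_i' \subsetneq W_i$ is reducible in $\Gamma_{T_i}(G^R)$ iff $W_i \setminus W_i'$ is reducible in $\Gamma_{S_{i-1}}(G)$, which fails by the minimality of $W_i$ at stage $i$ of the forward strategy. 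So $W_i$ is minimally reducible in $\Gamma_{T_i}(G^R)$. Because $G^R$ is nonsingular, the applicable rule on $W_i$ must have the same type as $\gamma_i$ (singletons admit only $\gpr$ in a nonsingular graph, and minimally reducible pairs must be the domain of $\gdr$), so the reversed sequence of rules is itself applicable.

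For Part 2, if $H$ is any graph on $V$ whose successful strategies coincide with the reverses of those of $G$, then $H$ and $G^R$ have identical sets of strategies. Since $W \in \cR_0$ is characterized by the existence of a strategy beginning with removal of $W$ without any $\gnr$ rule, $\cR_0(H) = \cR_0(G^R)$, and Theorem \ref{posetDeterminesGraph} forces $H = G^R$. The main technical obstacle is the minimality verification in Part 1: one must translate ``no nonempty proper subset of $W_i$ is reducible'' between $\Gamma_{S_{i-1}}(G)$ and $\Gamma_{T_i}(G^R)$, for which the key ingredients are the identity $V \setminus (S_{i-1} \cup U) = T_i \cup (W_i \setminus U)$ and the involution $U \leftrightarrow W_i \setminus U$ on nonempty proper subsets of $W_i$; together these make the symmetry of the reducibility condition explicit so that forward and reverse minimality coincide.
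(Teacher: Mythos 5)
Your proposal is correct and follows the same route as the paper: both derive the core identities from $\cR_0(G^R) = \cR_0(G) \oplus V$ (Corollary \ref{pivotPosets}) and from equation \ref{pivotsOfReductions} specialized to $U = V$. The only difference is one of rigor rather than method: the paper asserts without detail that reverse-applicability of strategies is ``equivalent to'' the complement-reducibility statement and omits the uniqueness argument, whereas you supply the missing rule-by-rule verification (transferring minimality of each $W_i$ via path invariance and using nonsingularity to pin down the rule type) and deduce uniqueness from Theorem \ref{posetDeterminesGraph} --- a welcome tightening of the same argument.
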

\begin{proof}
 The statement that a successful combinatorial reduction strategy of $G$ applies in reverse to $G^R$ is equivalent to the statement that a subset of the vertices of $G$ is reducible in $G$ if and only if its complement is reducible in $G^R$. This latter statement is equivalent to $\cR_0(G^R) = \cR_0(G) \oplus V$, which is true by the definition of the pivot of a graph. The statement $(\Gamma_W (G))^R = I_{V \backslash W} (G^R)$ follows from equation \ref{pivotsOfReductions} by letting the sets $W_1,W_2$ in the statement of equation \ref{pivotsOfReductions} be the sets $V,V\backslash W$, respectively.
\end{proof}

Observe that the combinatorial description of $G^R$ (the graph whose successful combinatorial reduction strategies are the combinatorial reductions strategies of $G$ applied in reverse) cannot be used to describe an analogous retrograph for singular graphs $G$. This is because $\cR_0(G^R) = \cR_0(G) \oplus V$ implies that, since $\emptyset$ is pivotal in any graph, the whole vertex set $V$ must be pivotal in $G$ in order for $G^R$ to be well-defined. Thus the retrograph, defined by this combinatorial property, exists if and only if $G$ is nonsingular.

The first part of this theorem shows the first use of the retrograph: it allows us to look ahead and immediately see how a graph reduction strategy must end, without actually computing the entire reduction strategy. The second part of the theorem shows that, in some sense, the retorgraph reduces the study of reductions of a given graph to the study of subgraphs of the retrograph. More precisely, if we wish to verify some statement on the result of every reduction of $G$, and we can formulate the statement in such a way that it is easy to verify on the retrograph, then we can simply verify this latter statement on all subgraphs of the retrograph, and avoid computing any reductions.

We also observe that Theorem \ref{retrographTheorem} can be restated in terms of the adjacency matrix to obtain an interesting matrix identity. In fact, this identity holds for matrices over any field, by considering pair-classes over fields other than $\textbf{F}_2$. It may also be proved directly by algebra.

\begin{corollary}
 If $A$ is an invertible $n \times n$ matrix, $V$ is the set $1,2,\dots,n$, regarded as both the set of rows and the set of columns, and $X \subset V$, then $A\langle X,X \rangle$ is invertible if and only if $A^{-1} \langle V \backslash X, V \backslash X \rangle$ is invertible. In this case, if $A$ is written in block form as $\smat{P}{Q}{R}{S}$ with $A \langle X,X \rangle = P$, then the matrix $A^{-1} \langle V \backslash X, V \backslash X \rangle$ is equal to $(S - QP^{-1}R)^{-1}$.
\end{corollary}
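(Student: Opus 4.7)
The plan is to give a direct matrix-algebraic proof using the block Schur complement factorization of $A$, which works uniformly over any field. (One could alternatively deduce the $\textbf{F}_2$ case from Theorem \ref{retrographTheorem} by noting that the retrograph's adjacency matrix is $A^{-1}$, up to signs, coming from $A \ast V$; specializing the equivalence ``$W$ reducible in $G$ iff $V\setminus W$ reducible in $G^R$'' to nonsingular $W$ would give the invertibility half of the claim, and the formula $(\Gamma_W G)^R = I_{V\setminus W}(G^R)$ would give the explicit formula for $A^{-1}\langle V\setminus X, V\setminus X\rangle$. But Schur complements handle both directions in one shot and cover all fields.)

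Order rows and columns so that indices in $X$ come first, so $A = \mat{P}{Q}{R}{S}$ with $P = A\langle X,X\rangle$. Assume first that $P$ is invertible, and set $T = S - RP^{-1}Q$. A direct multiplication gives the block LDU factorization
\[
A = \mat{I}{0}{RP^{-1}}{I} \mat{P}{0}{0}{T} \mat{I}{P^{-1}Q}{0}{I}.
\]
Since $A$ is invertible and the outer two triangular factors are always invertible, the middle factor must be invertible, which forces $T$ to be invertible. Inverting each of the three factors and multiplying in the opposite order yields
\[
A^{-1} = \mat{I}{-P^{-1}Q}{0}{I} \mat{P^{-1}}{0}{0}{T^{-1}} \mat{I}{0}{-RP^{-1}}{I},
\]
and a short expansion shows the lower-right block of $A^{-1}$ is exactly $T^{-1}$. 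Hence $A^{-1}\langle V\setminus X, V\setminus X\rangle = (S - RP^{-1}Q)^{-1}$, which is invertible; this establishes the ``only if'' direction together with the explicit formula.

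For the converse, I would apply the same argument to $B := A^{-1}$, with $Y := V\setminus X$ in the role of $X$: if $B\langle Y, Y\rangle = A^{-1}\langle V\setminus X, V\setminus X\rangle$ is invertible, then $B^{-1}\langle V\setminus Y, V\setminus Y\rangle = A\langle X, X\rangle$ is invertible by the direction just proved. Thus the two invertibility conditions are equivalent. There is really no main obstacle; the only care required is the routine bookkeeping inside the block factorization and the observation that the argument is symmetric in $(A,X)$ and $(A^{-1}, V\setminus X)$, so a single block computation handles both implications.
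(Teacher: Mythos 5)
Your proof is correct, but it takes a different route from the paper. The paper does not prove this corollary by direct computation at all: it presents the identity as a restatement of Theorem \ref{retrographTheorem} in terms of adjacency matrices, so that the equivalence ``$A\langle X,X\rangle$ invertible iff $A^{-1}\langle V\backslash X, V\backslash X\rangle$ invertible'' is read off from $\cR_0(G^R)=\cR_0(G)\oplus V$ and the formula for the complementary block is read off from $(\Gamma_W(G))^R = I_{V\backslash W}(G^R)$; the extension to arbitrary fields is asserted via pair-classes over those fields, and the paper merely remarks that the identity ``may also be proved directly by algebra.'' Your block LDU/Schur-complement argument is precisely that direct algebraic proof, carried out in full: it is self-contained, avoids all of the pivot and pair-class machinery, and genuinely establishes the statement over every field rather than deferring to an undeveloped generalization, and your observation that the converse follows by applying the same computation to $(A^{-1}, V\backslash X)$ is clean and valid. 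What the paper's route buys instead is the combinatorial interpretation (the identity as the matrix shadow of retrograph duality) and economy given that the pivot results are already in place. One small point worth flagging: with the block convention $A=\smat{P}{Q}{R}{S}$, $P$ of size $|X|\times|X|$, the dimensionally correct Schur complement is $S-RP^{-1}Q$, which is what you derive; the statement's $(S-QP^{-1}R)^{-1}$ appears to be a typo (it does not even typecheck when $|X|\neq n-|X|$), so your formula, not the printed one, is the right conclusion.
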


We illustrate the retrograph concept with a simple example. Consider the graph and retrograph shown in Figure \ref{fig:retrograph}.

\begin{figure}[h]
\begin{center}
\begin{tabular}{c||c}
$A = \left( \begin{array}{ccccc} 1&0&0&1&1\\0&1&1&1&1\\0&1&1&0&0\\1&1&0&1&1\\1&1&0&1&0 \end{array} \right)$
& $A^{-T} = \left( \begin{array}{ccccc}1&1&1&0&0\\1&0&0&1&0\\1&0&1&1&0\\0&1&1&1&1\\0&0&0&1&1 \end{array} \right)$\\
\\ \hline \\
\includegraphics[scale=0.3]{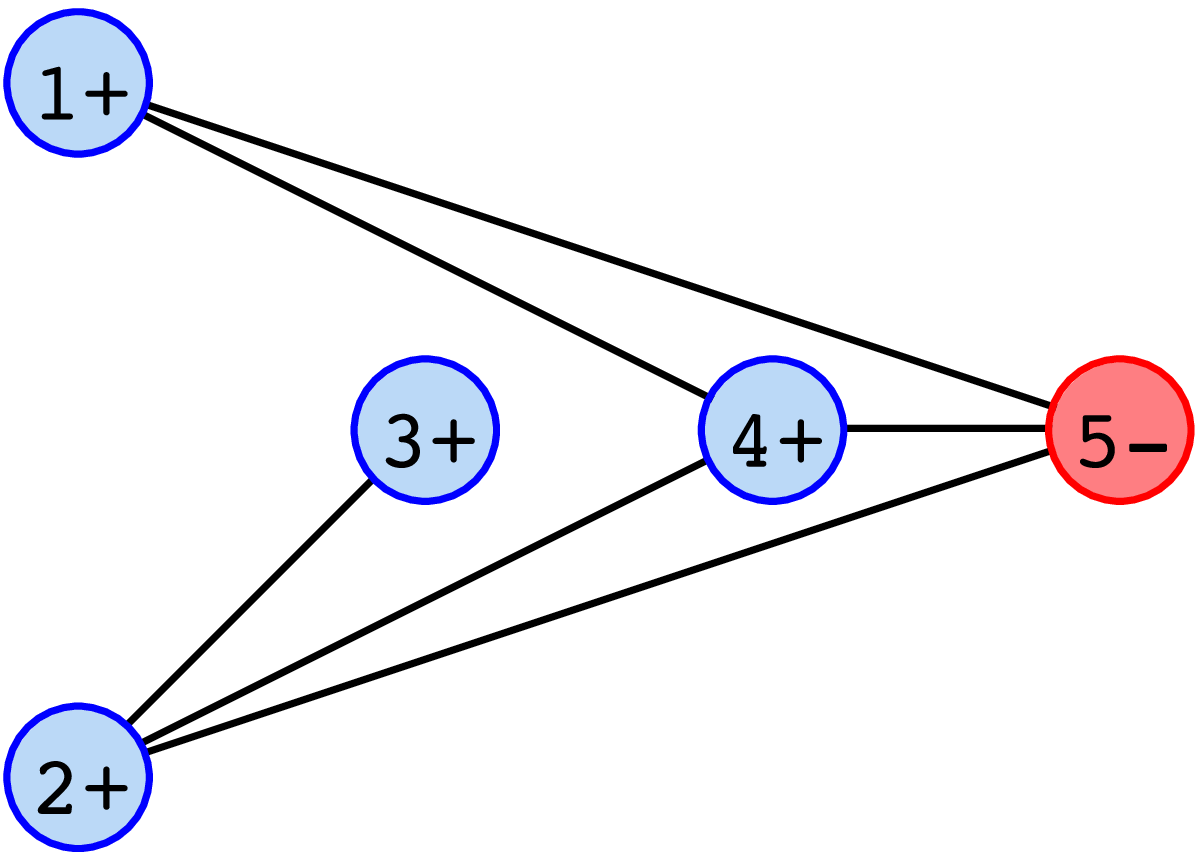} & \includegraphics[scale=0.3]{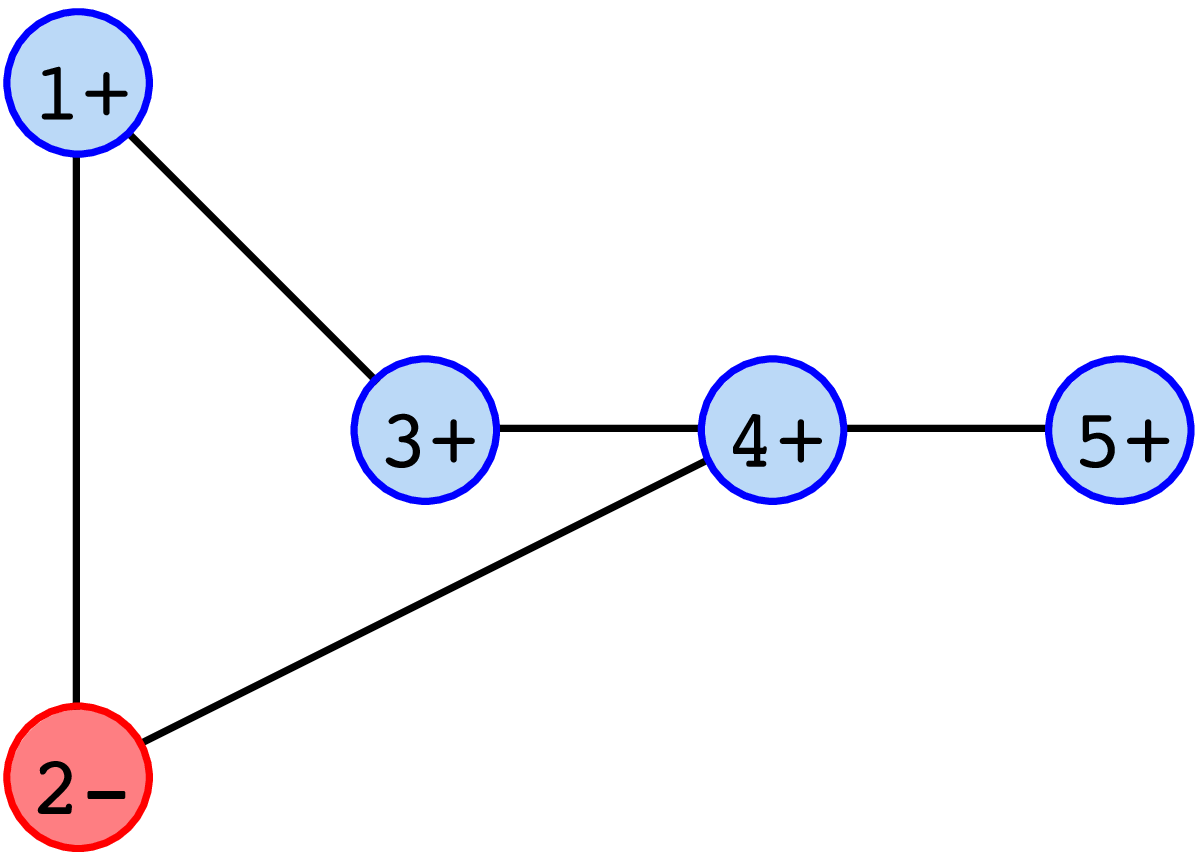}\\
\end{tabular}
\end{center}
\caption{A graph $G$ and its retrograph $G^R$.}
\label{fig:retrograph}
\end{figure}

We show in Figure \ref{fig:retroReduction} one successful reduction strategy of this graph, and show the retrograph of the result at each stage. Observe that, in keeping with Theorem \ref{retrographTheorem}, these retrographs of reductions are simply induced subgraphs of the original retrograph.

\begin{figure}[h]
\begin{center}
\begin{tabular}{c||c}
$\gpr_{v_1}(G)$ & $I_{\{v_2,v_3,v_4,v_5\}}(G^R)$\\&\\
\includegraphics[scale=0.3]{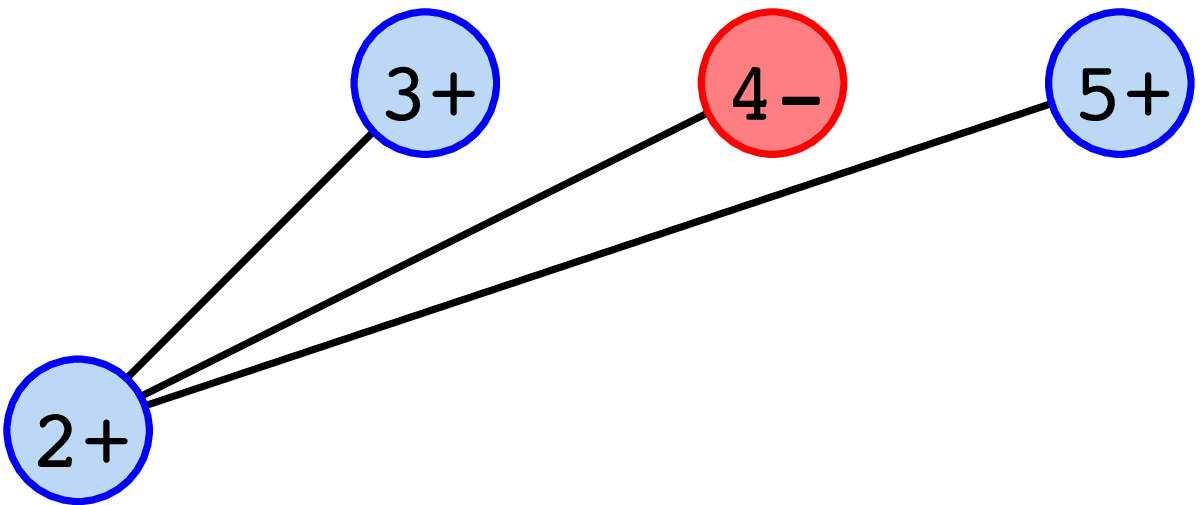} & \includegraphics[scale=0.3]{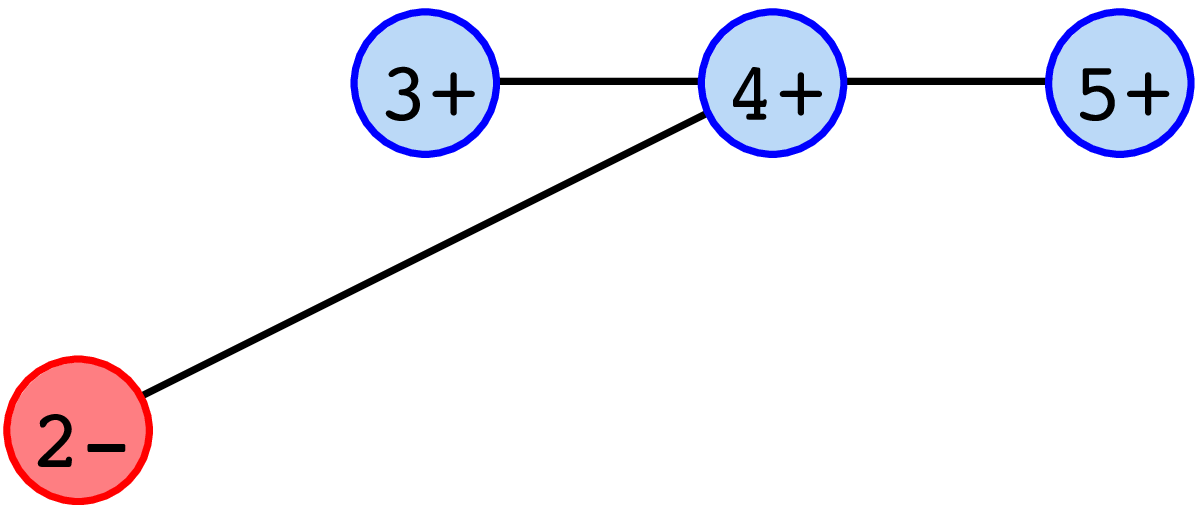}\\
\\ \hline \\
$\gpr_{v_5} \circ \gpr_{v_1} (G)$ & $I_{\{v_2,v_3,v_4\}}(G^R)$\\&\\
\includegraphics[scale=0.3]{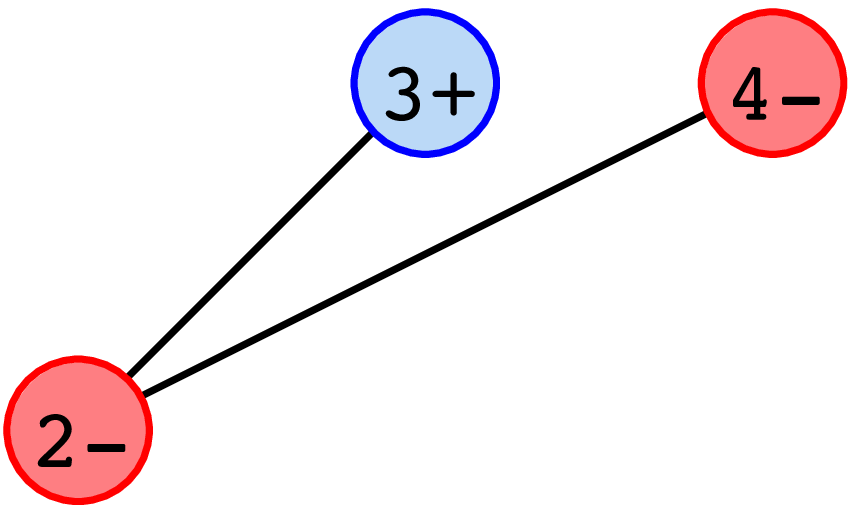} & \includegraphics[scale=0.3]{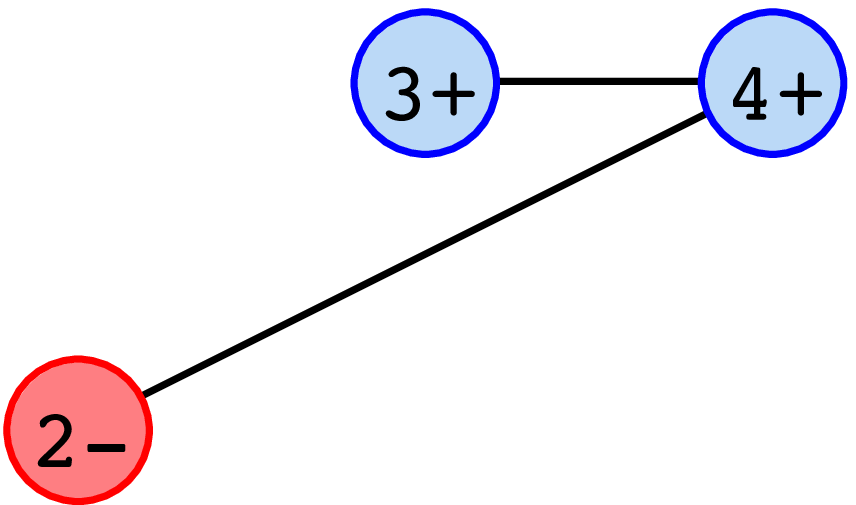}\\
\\ \hline \\
$\gdr_{v_2,v_4} \circ \gpr_{v_5} \circ \gpr_{v_1} (G)$ & $I_{\{v_3\}} (G^R)$\\&\\
\includegraphics[scale=0.3]{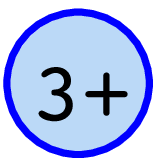} & \includegraphics[scale=0.3]{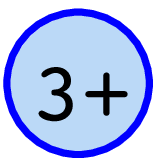}\\
\end{tabular}
\end{center}
\caption{A reduction strategy of $G$, with retrographs at each stage.}
\label{fig:retroReduction}
\end{figure}

\subsection{Reverse reductions}\label{reverseReductions}
Graph pivots also provide a simple characterization of what might be called the inverse problem for graph reductions: given a graph $G$, which graphs $G'$ can be transformed into $G$ by a graph reduction?

Consider first the case where $G'$ can be transformed to $G$ by a nonsingular reduction. If $V$ are the vertices of $G$, and $W$ the other vertices of $G'$, we can express this by writing $\Gamma_W(G') = G$. By Theorem \ref{pivotReduction}, this is equivalent to $I_V \circ P_W (G') = G$. Denoting by $H$ the graph $P_W(G')$, and recalling that $P_W(H) = G'$, we have the following bijection (once we fix a set $W$ disjoint from $V$).

\begin{equation}\label{reverse1}
 \{G':\ \Gamma_W(G') = G \} = \{P_W(H):\ I_V(H) = G\ \textrm{and}\ W \in \cR_0(H) \}
\end{equation}

On the left side of equation \ref{reverse1}, $H$ ranges over all graphs on vertices $V \cup W$.

In general, $G'$ can be reduced to $G$ if and only if there is a nonsingular reduction from $G'$ to a union of $G$ with some number of isolated negative vertices.

\begin{theorem}
 If $G$ is any graph with vertices $V$, and $W$ is a set of vertices disjoint from $V$, then the set of graphs $G'$ on vertices $V \cup W$ such that $\Gamma_W (G') = G$ is precisely $\{ P_{W_1} (H)\}$, where $W_1$ ranges over all subsets of $W$ and $H$ ranges over all graphs on vertices $V \cup W$ such that $W_1 \in \cR_0(H)$ and all vertices in $W \backslash W_1$ do not have loops and are not adjacent to any vertices in $V$.
\end{theorem}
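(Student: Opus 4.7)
The plan is to prove set equality by verifying both inclusions, using equation~\ref{pivotsHaveReductions} ($I_W \circ P_U(G) = \Gamma_U(G)$ for disjoint $U,W$ with $U \in \cR_0(G)$) together with the rank formula Theorem~\ref{ranksOfReductions}. Note that an implicit hypothesis $I_V(H) = G$ is required on $H$---otherwise $\Gamma_W(P_{W_1}(H))$ need not equal $G$; this condition is evidently intended and will emerge naturally in both directions of the argument.

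For the forward inclusion, let $G' = P_{W_1}(H)$ as described. By Corollary~\ref{pivotPosets}, $\cR_0(G') = \cR_0(H) \oplus W_1$, so $W_1 \in \cR_0(H)$ implies $W_1 \in \cR_0(G')$, and equation~\ref{pivotsHaveReductions} gives $\Gamma_{W_1}(G') = I_{V \cup (W \setminus W_1)}(H)$. The hypothesis on the vertices in $W \setminus W_1$ then says that in $\Gamma_{W_1}(G')$ the principal block on $W \setminus W_1$ has zero diagonal and the block connecting $W \setminus W_1$ to $V$ is zero. Proposition~\ref{matrixReducibility} immediately gives reducibility of $W \setminus W_1$, and Proposition~\ref{matrixGamma} (applied with $M=0$) yields $\Gamma_{W \setminus W_1}(\Gamma_{W_1}(G')) = I_V(H) = G$; path invariance (Theorem~\ref{pathInvariance}) concludes $\Gamma_W(G') = G$.

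For the reverse inclusion, given $G'$ with $\Gamma_W(G') = G$, the key linear algebra step is the following lemma: every symmetric matrix over $\textbf{F}_2$ of rank $r$ admits an invertible $r \times r$ principal submatrix. This follows by induction on $n$; when $n > r$, choose any row linearly dependent on the others, note that by symmetry the corresponding column is likewise dependent, and check that simultaneously deleting that row and column preserves rank. Applied to the principal submatrix on $W$ of the adjacency matrix of $G'$, this produces $W_1 \subset W$ with $|W_1| = \textrm{rank}_{G'}(W)$ and $W_1 \in \cR_0(G')$. Set $H = P_{W_1}(G')$, so that $G' = P_{W_1}(H)$ by involutivity of the pivot, and $W_1 \in \cR_0(H)$ by Corollary~\ref{pivotPosets}. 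As in the first direction, $\Gamma_{W_1}(G') = I_{V \cup (W \setminus W_1)}(H)$; by Theorem~\ref{ranksOfReductions} the principal block of $\Gamma_{W_1}(G')$ on $W \setminus W_1$ has rank $0$ (hence is zero), and since this set is reducible there by path invariance, Proposition~\ref{matrixReducibility} forces the block connecting to $V$ to vanish as well. Reading these conclusions back in $H$ yields precisely the required no-loop and no-edge-to-$V$ properties, and a final application of Proposition~\ref{matrixGamma} confirms $I_V(H) = G$.

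The principal obstacle is the linear algebra lemma; over characteristic $2$ the standard orthogonal-basis arguments used for real symmetric matrices break down, so one needs the direct inductive argument just described, which exploits the simultaneous row-column dependence guaranteed by symmetry to handle the $\textbf{F}_2$ setting uniformly. Everything else is essentially bookkeeping: path invariance and the pivot-reduction correspondence of Section~\ref{pivots} do all the structural work, once the decomposition $W = W_1 \sqcup (W \setminus W_1)$ separating the nonsingular from the nullity-contributing vertices is in hand.
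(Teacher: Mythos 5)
Your proof is correct and follows essentially the same route as the paper's: split $W$ into a maximal nonsingular subset $W_1$ and its complement, and translate between $G'$ and $H = P_{W_1}(G')$ via the pivot--reduction correspondence and path invariance. You are in fact more careful than the paper in two places: you supply a proof of the fact (asserted without argument in the paper) that a maximal nonsingular principal submatrix of a symmetric matrix over $\textbf{F}_2$ realizes its full rank, and you correctly observe that the theorem as stated omits the hypothesis $I_V(H) = G$ (which does appear in the nonsingular case, equation \ref{reverse1}), without which the asserted set equality fails.
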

\begin{proof}
 Suppose $G' = P_{W_1} (H)$ satisfies the given conditions. Then clearly $W \backslash W_1$ is reducible in $\Gamma_{W_1}(G')$, since all these vertices are negative and isolated. Thus by Theorem \ref{pathInvariance}, $W$ is reducible in $G'$, and $\Gamma_W(G') = G$, since there exists a combinatorial reduction strategy from $G'$ to $\Gamma_{W_1}(G')$, and the remaining vertices $W \backslash W_1$ can be removed by the negative rule to obtain $G$, which is then equal to $\Gamma_W(G')$.

Conversely, suppose $G'$ is a graph on vertices $V \cup W$ such that $W \in \cR(G')$ and $\Gamma_W (G') = G$. Then if $W_1$ is a maximal subset of $W$ nonsingular in $G'$, $\textrm{rank}_{G'}(W_1)$ must be $\textrm{rank}_{G'}(W)$, hence reducing $W_1$ results in the disjoint union of $G$ and a collection of isolated negative vertices. Hence $G'$ has the desired form.
\end{proof}

\section{Conclusion}
In this paper, we have introduced new algebraic methods for the study of the graph formalization of gene assembly, demonstrating in particular the close relation between combinatorial graph reductions and linear algebra over $\textbf{F}_2$, and giving combinatorial interpretations to the binary rank of a graph and the inverse of its adjacency matrix. Our general definition of reducibility and graph reduction, together with the path invariance property, shed light on the relation between the three combinatorial reduction rules. Some of this relation had been uncovered in \cite{brijder}, although our methods successfully incorporate the negative rule into the analysis and approach the problem using different methods. We have also generalized results from \cite{brijder} on the pivot operation and its relation to graph reductions, particularly the special case which we call the retrograph. We believe in particular that our approach of considering matrix pivots by means of pair-classes of matrices may shed considerable light on the properties of pivots of graphs. Finally, the reducibility poset and pivotal poset give a new and useful way to phrase many problems about graph reductions, in particular regarding the study of parallel complexity.

The most mysterious aspect of our method in this paper is the relation between the pivotal poset $\cR_0(G)$ and the other levels $\cR_n(G)$ of the reducibility poset of a graph. Theorem \ref{posetDeterminesGraph} demonstrates that the pivotal poset completely determines the other levels of the reducibility poset, but the method of recovered the latter from the former is rather cumbersome. In particular, it seems difficult to understand the effect of pivot operations on the full reducibility poset. It is possible that some restricted class of pivot operations are better behaved in this regard.

A more combinatorial way of stating the difficulties described above is that very little is currently understood about when negative rules may occur in the course of a combinatorial reduction strategy. The original problem which led to this paper was the verification that the \textit{number} of times the negative rule occurs in a successful reduction is a graph invariant, but presumably much more could be said about the places in a reduction strategy that the negative rule could occur. All this essentially amounts to understanding the structure of the full reducibility poset.

Although pair-classes of matrices were extremely convenient in studying pivot operations on graphs, it seems that they are the wrong structure to consider pivots and graph reductions. First, the fact that pivots of symmetric matrices remain symmetric appears to be somewhat coincidental; a more natural formulation of pivots might make this fact obvious. A more intrinsic way of stating this criticism is to observe that the adjacency matrix should be viewed as a symmetric bilinear form, not as a linear transformation, as was made vivid in Section \ref{algebraic}. The correct definition of the pivot operation should more explicitly respect this aspect of the adjacency matrix. If such a definition can be found, it might more naturally subsume the intrinsic definition of graph reductions made in Section \ref{algebraic}, and perhaps illuminate the difficulties mentioned in relating the pivotal poset to the full reducibility poset. The notion of the retrograph, which can easily be defined intrinsically , may be critical to this problem. The pivot operation may be regarded as interpolating between the concepts of graph reduction and the retrograph, since the retrograph is a special case, and graph reductions appear as subgraphs of pivots. Thus a more natural definition of the pivot of a graph would presumably interpolate between our definitions in Section \ref{algebraic} and some intrinsic definition of the retrograph.

We have laid some groundwork for an investigation of parallel complexity using algebraic methods. The following problems, to which these methods might be useful, remain open. These questions can also be stated in terms of the reducibility poset of a graph, or equivalently in terms of ranks of submatrices of the adjacency matrix.

\begin{problem}
Let $f(n)$ denote the largest parallel complexity of a signed graph on at most $n$ vertices. Is $f(n)$ bounded by a constant? If not, what is its asymptotic behavior as $n$ approaches infinity? What if $f(n)$ instead denotes the largest parallel complexity of a graph on $n$ negative vertices?
\end{problem}

It has been conjectured in \cite{harju} that the function $f(n)$ is in fact bounded by a constant. The best known upper bound is linear in $n$.

\begin{problem}
Given a graph $G$ on $2n$ negative vertices, partitioned into $n$ edges $e_1, e_2, \dots, e_n$ on disjoint vertex sets, is there an efficient algorithm to determine whether the $n$ double rules $\textrm{gdr}_{e_i}$ removing these edges apply in parallel?
\end{problem}

Both these questions could also be asked in terms of average behavior. Of course both of the following problems are not currently well-posed, since various probability distributions could be chosen in both cases.

\begin{problem}
What is the average parallel complexity of a signed graph on $n$ vertices?
\end{problem}

\begin{problem}
If $n$ disjoint edges $e_1,\dots e_n$ between negative vertices are fixed, and edges between the vertices of the $e_i$ are either added or not added at random, what is the probability that the $n$ double rules $\textrm{gdr}_{e_i}$ removing these edges apply in parallel?
\end{problem}

\section{Acknowledgments}
This research was done at the University of Minnesota Duluth with the financial support of the National Science Foundation (grant number DMS-0447070-001) and the National Security Agency (grant number H98230-06-1-0013). I gratefully acknowledge the advice and assistance of Nathan Kaplan, and Ricky Liu throughout the project, and Raju Krishnamoorthy for reading this paper and providing comments. Geir Helleloid and Jack Huizenga also provided useful suggestions. I am grateful to the anonymous referee of an earlier draft for making me aware of the matrix pivot operation and its relevance to this topic. Finally, I would like to thank Joe Gallian for introducing me to the topic, and for support throughout the project.

\end{document}